\@date \else {\vskip3ex \centering\footnotesize\@date\par\vskip1ex}\fi
\else \@footnotetext{\@setdate}\fi}
\newcommand*\patchAmsMathEnvironmentForLineno[1]{%
  \expandafter\let\csname old#1\expandafter\endcsname\csname #1\endcsname
  \expandafter\let\csname oldend#1\expandafter\endcsname\csname end#1\endcsname
  \renewenvironment{#1}%
     {\linenomath\csname old#1\endcsname}%
     {\csname oldend#1\endcsname\endlinenomath}}% 
\newcommand*\patchBothAmsMathEnvironmentsForLineno[1]{%
  \patchAmsMathEnvironmentForLineno{#1}%
  \patchAmsMathEnvironmentForLineno{#1*}}%
\newtheorem{theorem}{Theorem}[section]
\newtheorem{corollary}[theorem]{Corollary}
\theoremstyle{definition}
\newtheorem{define}[theorem]{Definition}
\newtheorem{remark}[theorem]{Remark}
\newtheorem{proposition}[theorem]{Proposition}
\newtheorem{notation}[theorem]{Notation}
\newtheorem{assumption}[theorem]{Assumption}
\newcommand\ba[1]{\begin{align}\label{#1}}
\newcommand\ea{\end{align}}
\newcommand\bas{\begin{align*}}
\newcommand\eas{\end{align*}}
\newcommand\ee{\end{equation}}
\newcommand\be{\begin{equation}}
\newcommand\ees{\end{equation*}}
\newcommand\bes{\begin{equation*}}
\mathchardef\emptyset="001F
\newcommand{\e}{\varepsilon}
\newcommand{\R}{{\mathbb R}}
\newcommand{\rn}{{{\R}^N}}
\newcommand{\rk}{{{\R}^K}}
\newcommand{\wto}{\rightharpoonup}
\newcommand{\wtos}{\mathrel{\mathop{\rightharpoonup}\limits^*}}
\newcommand{\CC}{{\mathcal A}}
\newcommand{\N}{{\mathbb{N}}}
\newcommand\norm[1]{\left\|#1\right\|}
\newcommand{\abs}[1]{\left\lvert#1\right\rvert} 
\newcommand{\fsp}[1]{\left(#1\right)} 
\newcommand{\fmp}[1]{\left[#1\right]}
\newcommand{\flp}[1]{\left\{#1\right\}}  
\newcommand{\fjp}[1]{\left<#1\right>}
\newcommand{\vp}{\varphi}
\newcommand{\limn}{\lim_{n\rightarrow\infty}}
\newcommand{\lime}{\lim_{\e\rightarrow0}}
\newcommand{\B}{{\mathscr B}}
\newcommand{\loc}{{\operatorname{loc}}}
\newcommand{\seqn}[1]{\left\{#1\right\}_{n=1}^\infty}  
\newcommand{\seqk}[1]{\left\{#1\right\}_{k=1}^\infty}  
\newcommand{\seql}[1]{\left\{#1\right\}_{l=1}^\infty}  
\newcommand{\seqe}[1]{\left\{#1\right\}_{\e>0}}
\newcommand{\T}{{\mathbb T}}
\newcommand{\FT}{{\mathcal T}}
\definecolor{CMUred}{RGB}{153,0,0}
\definecolor{CMUgreen}{RGB}{0,135,81}
\definecolor{CMUblue}{RGB}{0,51,127}
\definecolor{Pblue}{RGB}{87,158,208}
\newcommand{\argmin}{{\operatorname{arg\,min}}}
\newcommand{\mb}{{\mathcal{M}_b}}
\newcommand{\liminfn}{{\liminf_{n\to\infty}}}
\def\argmin{\mathop{\rm arg\, min}}
\numberwithin{equation}{section}
\newcommand{\normmm}[1]{{\left\vert\kern-0.25ex\left\vert\kern-0.25ex\left\vert #1 
    \right\vert\kern-0.25ex\right\vert\kern-0.25ex\right\vert}}
\title{Adaptive image processing: a bilevel structure learning approach for mixed-order total variation regularizers}
\author[P. Liu] {Pan Liu}
 \address[Pan Liu]{Centre of Mathematical Imaging and Healthcare,\\ 
Department of Pure Mathematics and Mathematical Statistics, \\
 University of Cambridge, Wilberforce Road, Cambridge CB3 0WA, UK}
 \email[P. Liu] {panliu.0923@maths.cam.ac.uk}
\subjclass[2010]{26B30, 94A08, 	47J20}
\keywords{image processing, optimal training scheme, higher order differential operators, $\Gamma$-convergence}
\date{\today}                                           % Activate to display a given date or no date
\begin{document}
%\linenumbers
\begin{abstract}
A class of mixed-order \emph{PDE}-constraint regularizer for image processing problem is proposed, generalizing the standard first order total variation $(TV)$. A semi-supervised (bilevel) training scheme, which provides a simultaneous optimization with respect to parameters and new class of regularizers, is studied. Also, A finite approximation method, which used to solve the global optimization solutions of such training scheme, is introduced and analyzed.
\end{abstract}
\maketitle
\tableofcontents

\thispagestyle{empty}%this command remove the page number at the title page

\section{Introduction}\label{sec_introduction}
The use of variational technics with non-smooth regularizers in image processing has become popular in the last decades. One of the most successful approaches is introduced in the celebrated work \cite{rudin1992nonlinear} which relies on the so called \emph{ROF} total-variational functional
\be\label{tv_initial_denoisy}
\mathcal I(u):=\norm{u-u_\eta}_{L^2(Q)}^2+\alpha TV(u),
\ee
where $u_\eta\in L^2(Q)$ is a given corrupted image, $Q:=(0,1)^2$ represents the unit square, $\alpha\in\R^+$ is an \emph{intensity parameter}, and $TV(u)$ stands for the total variation of $u$ in $Q$ (see \cite{evans2015measure}). In the simple case that $u\in W^{1,1}(Q)$, we have
\be\label{basic_tv_intro}
TV(u) = \int_Q\abs{\nabla u}\,dx = \int_Q\fsp{\abs{\partial^1_1u(x)}^2+\abs{\partial^1_2 u(x)}^2}^{1/2}dx
\ee
One advantage of using the $TV$ regularization is it promotes piecewise constant reconstructions, thus preserving edges. 
However, this also leads to blocky-like artifacts in the reconstructed image, an effect known as stair-casing. To mitigate this effect, and also to explore possible improvements, the following methods has been introduced and studied:
\begin{enumerate}[1.]
\item
 using higher-order extensions (\cite{bredies2010total, liu2016Image});
 \item
 changing the underlying Euclidean norm (\cite{carolapani2018bilevel});
 \item
  introducing fractional order derivatives \cite{2018arXiv180506761L,liu2016weightedreg}.  
\end{enumerate}
These methods introduces collections of regularizers which generalizes $TV$ seminorm.  For example, in \cite{carolapani2018bilevel}, the underlying Euclidean norm of $TV$ seminorm is generalized from $p=2$, used in \eqref{basic_tv_intro}, to $p\in [1,+\infty]$ by
\be
TV_p(u) = \int_Q\abs{\nabla u}_p\,dx = \int_Q\fsp{\abs{\partial^1_1u(x)}^p+\abs{\partial^1_2 u(x)}^p}^{1/p}dx.
\ee
In \cite{2018arXiv180506761L}, the order of derivative is generalized from $r=1$, used in \eqref{basic_tv_intro}, to $r\in\R^+$, by
\be
TV^r(u) = \int_Q\abs{\nabla^r u}\,dx = \int_Q\fsp{\abs{\partial^r_1u(x)}^2+\abs{\partial^r_2 u(x)}^2}^{1/2}dx,
\ee
in which the fractional order derivative is realized by using the \emph{Riemann-Liouville} fractional order derivative (see \cite{MR1347689} for definition). In both \cite{2018arXiv180506761L,liu2016weightedreg}, it has been shown that for given corrupted image $u_\eta$, a carefully selected \emph{regularizer} parameter $p\in[1,+\infty]$ (resp. $r\in\R^+$) allows $TV_p$ (resp. $TV^r$) to provide improved imaging processing result, and such selection can be done automatically by using a bi-level training scheme which will be detailed below.  \\\\
In general, with a reliable selection mechanism, the imaging processing results would certainly be improved if we could further expand the collections of regularizers. To this purpose, in this paper we introduce a family of novel $TV$-like \emph{PDE}-constraint regularizer (semi-norm), say $PV_\B$, by
\be\label{abextension}
PV_\B(u):=\abs{\B u}_{\mb(Q;\rk)}.
\ee
Here $\abs{\cdot}_\mb$ denotes the \emph{Radon} norm of a measure, and $\B$: $L^1(Q)\to \mathcal D'(Q,\rk)$ is a linear differential operator (see Notation \ref{operator_B}). 
In the simple case $\B=\nabla u$, we recover the total variation $TV$ seminorm. We remark that the abstract framework studied in \eqref{abextension} naturally incorporates the recent \emph{PDE}-based approach to image denoising problems formulated  in \cite{barbu.marinoschi}, and also allows us to simultaneously describe a variety of different image-processing techniques.\\\\
The aim of this paper is threefold. First, we provide a rigorous and detailed analysis of the properties of the $PV_\B$ seminorm, such as the approximation by smooth functions, 
lower semi-continuity with respect to both function $u$ and operator $\B$, and a point-wise characterization of the sub-gradient of $PV_\B$.\\\\ 
 The second result is the study of the aforementioned selection mechanism, realized by a semi-supervised (bilevel) training scheme 
 defined in machine learning (see \cite{chen2013revisiting, chen2014insights, domke2012generic,tappen2007learning,de2013image,kunisch2013bilevel}). 
 For example, we could apply the bilevel training scheme to determine the optimal value of $\alpha\in\R^+$ from \eqref{tv_initial_denoisy}, which controls the strength of the regularizer. 
 More precisely, we assume that the corrupted image $u_\eta$ can be decomposed as $u_\eta=u_c+\eta$ where $u_c\in L^2(Q)$ represents a noise-free clean image (the perfect data), and $\eta$ encodes noise, and we call $(u_\eta,u_c)$ as \emph{training set}. Then, a bilevel training scheme, say Scheme $\mathcal B$, for determining the optimal intensity parameter $\alpha$ can be formulated as follows: 
\begin{flalign}
\text{Level 1. }&\,\,\,\,\,\,\,\,\,\,\,\,\,\,\,\,\,\,\,\,\,\, \alpha_\T\in\mathbb A[\T]:=\argmin\flp{\norm{u_\alpha-u_c}_{L^2}^2:\,\, \alpha\in \T},\tag{{$\mathcal B$-L1}}\label{scheme_B1_BV}&\\
\text{Level 2. }&\,\,\,\,\,\,\,\,\,\,\,\,\,\,\,\,\,\,\,\,\,\,u_\alpha:=\argmin\flp{\norm{u-u_\eta}_{L^2(Q)}^2+\alpha {TV_2(u)}:\,\, u\in BV(Q)}\tag{{$\mathcal B$-L2}}\label{scheme_B2_BV},&
\end{flalign}
where $\T:=\operatorname{cl}(\R^+)$, used in \eqref{scheme_B1_BV}, is called the \emph{training ground}. Roughly speaking, Level 1 problem in \eqref{scheme_B1_BV} looks for an $\alpha$ that solves the minimum $L^2$-distance to the clean image $u_c$, subject to the minimizing problem \eqref{scheme_B2_BV}. That is, scheme $\mathcal B$ is able to optimally adapt itself to the given ``perfect data" $u_c$.\\\\
In the same spirit, in order to identify the optimal operator $\B$ in $PV_\B$ for a given training set $(u_\eta,u_c)$, we introduce the scheme $\mathcal T$ (\eqref{training_level_bi}-\eqref{result_level_bi}) defined as
 \begin{flalign}
{\text{Level 1. }}&\,\,\,\,\,\,\,\,\,\,\,\,\,\,\,\,\,\,\,(\alpha_\T,\B_\T)\in\mathbb A[\T]:= \argmin\flp{\norm{u_c-u_{\alpha,\B}}_{L^2(Q)}^2:\,\,(\alpha,\B)\in\T},\tag{$\mathcal T$-L1}\label{training_level_bi} &\\
{\text{Level 2. }}&\,\,\,\,\,\,\,\,\,\,\,\,\,\,\,\,\,\,\,u_{\alpha,\B}:=\argmin\flp{\norm{u-u_\eta}_{L^2(Q)}^2+ \alpha PV_\B(u),\,\, u\in L^1(Q)}\tag{$\mathcal T$-L2}\label{result_level_bi}.&
\end{flalign}
In \eqref{training_level_bi}, we expand the training ground to $\T:=\operatorname{cl}(\R^+)\times \Sigma$ to incorporate the new parameter $\B\in\Sigma$, 
where $\Sigma$ denotes a closed collection of operators $\B$ (see Notation \ref{operator_B}, Notation \ref{def_training_set}, and \eqref{kappa_training_ground} for details).
We remark that the expanded training ground $\T$ allows scheme $\mathcal T$ to optimize the regularizer $PV_{\B}(u)$ and intensity parameter $\alpha$ simultaneously. 
We summarize the main result in the following theorem.
\begin{theorem}[see Theorem \ref{main_thm}]\label{main_thm_intro}
The training scheme $\mathcal T$ admits at least one solution $(\alpha_\T, \B_\T)\in\mathbb T$, and provides an associated optimally reconstructed image $u_{\alpha_\T, \B_\T}\in BV_{ \B_\T}(Q)$.
\end{theorem}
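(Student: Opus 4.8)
The plan is to attack the bilevel scheme by the direct method, treating the lower and upper levels in turn and linking them through a $\Gamma$-convergence/equi-coercivity argument in the operator variable $\B$.

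First I would establish well-posedness of the lower-level problem \eqref{result_level_bi}. For fixed $(\alpha,\B)\in\T$ the functional
\[
F_{\alpha,\B}(u):=\norm{u-u_\eta}_{L^2(Q)}^2+\alpha\,PV_\B(u)
\]
is convex, and strictly convex through its fidelity term, so any minimizer is unique; existence then follows by the direct method once I verify that $F_{\alpha,\B}$ is coercive on $BV_\B(Q)$ and sequentially lower semicontinuous. Coercivity comes from combining the $L^2$ control of $u$ with the bound $PV_\B(u)=\abs{\B u}_{\mb(Q;\rk)}$, which controls $u$ in $BV_\B(Q)$, while lower semicontinuity in $u$ is precisely the semicontinuity property of $PV_\B$ recorded among the structural results for the seminorm. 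This identifies $\uab$ as a well-defined element of $BV_\B(Q)$, which already yields the regularity assertion of the statement, and it lets me dispose of the degenerate endpoints $\alpha=0$ (where $\uab=u_\eta$) and $\alpha=+\infty$ (where the limit problem forces $\B u=0$) separately.

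Next I would prove stability of the solution map $(\alpha,\B)\mapsto\uab$. Taking $(\alpha_n,\B_n)\to(\alpha,\B)$ in $\T$, the goal is to show $F_{\alpha_n,\B_n}\wtog F_{\alpha,\B}$ together with an equi-coercivity estimate uniform in $n$; the fundamental theorem of $\Gamma$-convergence then gives $\uabn\to\uab$ in $L^2(Q)$. The $\liminf$ inequality is exactly the joint lower semicontinuity of $PV_\B$ in the pair $(u,\B)$, while the recovery sequence is supplied by the smooth-approximation result for $PV_\B$. This is where the main difficulty lies: as $\B_n$ varies, the underlying spaces $BV_{\B_n}(Q)$ themselves change, so the compactness needed to extract a limit of a minimizing sequence $\{\uabn\}$ must be obtained uniformly over $\B_n$, and the simultaneous passage to the limit in $\B_n u_n$ must be reconciled with the weak-$*$ convergence of the associated measures. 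Controlling this interplay — rather than the by-now routine direct-method bookkeeping — is the crux of the argument.

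Finally I would resolve the upper level \eqref{training_level_bi}. Because $\T=\operatorname{cl}(\R^+)\times\Sigma$ is compact (the closure of $\R^+$ being understood in the compactified half-line, and $\Sigma$ closed in a compact ambient topology, per Notation \ref{operator_B} and Notation \ref{def_training_set}), any minimizing sequence $(\alpha_n,\B_n)$ for the cost $\norm{u_c-\uabn}_{L^2(Q)}^2$ admits a subsequence converging to some $(\alpha_\T,\B_\T)\in\T$. The stability established above gives $\uabn\to u_{\alpha_\T,\B_\T}$ in $L^2(Q)$, so the cost passes to the limit and $(\alpha_\T,\B_\T)$ realizes the infimum. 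Combined with the lower-level regularity, this produces the optimally reconstructed image $u_{\alpha_\T,\B_\T}\in BV_{\B_\T}(Q)$, completing the proof.
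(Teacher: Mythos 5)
Your proposal is correct and follows essentially the same route as the paper: well-posedness of the lower level by convexity and lower semicontinuity of $PV_\B$, stability of the solution map $(\alpha,\B)\mapsto u_{\alpha,\B}$ via the $\Gamma$-convergence result of Theorem \ref{thm_Gamma_conv}, compactness of $\T$ for the upper level, and separate treatment of the endpoints $\alpha=0$ and $\alpha=+\infty$ exactly as in the paper's Cases 2 and 3. The only imprecision is that the fundamental theorem of $\Gamma$-convergence in the weak $L^2$ topology yields weak (not strong) convergence of the minimizers $u_{\alpha_n,\B_n}$, but weak lower semicontinuity of the cost $\norm{u_c-\cdot}_{L^2(Q)}^2$ is all that is needed to conclude, which is precisely how the paper closes Case 1.
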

In the third part of this article we focus on how to numerically determine the optimal solution of scheme $\mathcal T$, or equivalently, compute global minimizers of 
the \emph{assessment function} $\mathcal A(\alpha,\B)$: $\T\to\R^+$ defined as 
\be\label{cost_map_intro}
\mathcal A(\alpha,\B):=\norm{u_{\alpha,\B}-u_c}_{L^2(Q)}^2,
\ee
where $u_{\alpha,\B}$ is obtained from \eqref{result_level_bi}. However, as shown in \cite{carolapani2018bilevel} that even in the simplest case with
$\B=\nabla$ (i.e. $PV_\B=TV$), the assessment function $\mathcal A(\alpha,\nabla)$ is not quasi-convex (in the sense of \cite{MR1819784}, or simply convex), 
and hence the methods such as \emph{Newton's descent} or \emph{Line search} might get trapped in a local minimum. To overcome this difficulty, 
we introduce the concept of the \emph{acceptable optimal solution}. To be precise, we say the solution $(\alpha',\B')$ is an acceptable optimal solution of scheme $\mathcal T$ with the given error $\e>0$ if 
\be\label{accptable_train_result_intro}
\abs{\CC( \alpha_\T, \B_\T)-\CC(\alpha',\B')}<\e,
\ee
where $(\alpha_\T,\B_\T)$ is a global minimum obtained from \eqref{training_level_bi}.\\\\
To compute such acceptable optimal solution，
we use a finite approximation method, originally introduced and studied in \cite{carolapani2018bilevel}, and generalized in Section \ref{sec_numerical_finiteapprox} 
to fit our new regularizer $PV$. To this aim, and also for the numerical realization of scheme $\FT$, we add the following \emph{box-constraint} on the training ground $\T$.
\begin{itemize}
\item
The intensity parameter $\alpha$ is contained in a closet interval $[0,P]$, where the box-constraint constant $P>0$ can be chosen by the user;
\item
the collection $\Sigma$ of operator $\B$ satisfies an additional continuity assumptions, such as, for any $\B_1$, $\B_2\in \Sigma$,
\be
\abs{PV_{\B_1}(u)-PV_{\B_2}(u)}\leq O\fsp{\abs{\B_1-\B_2}}\min\flp{PV_{\B_1}(u),PV_{\B_2}(u)},
\ee
where $O(\cdot)$ denotes the big-$O$ notation.
\end{itemize}

Then, the finite approximation method is constructed based on a sequence of (finite) training sets $\T_l$, indexed by $l\in\N$, in which (where $\mathcal H^0(\cdot)$ denotes the counting measure)
\be
\mathcal H^0\fsp{\T_l}<+\infty\text{ and }\T\subset \operatorname{cl}\fsp{\bigcup_{l\in\N}\T_l}.
\ee
For the precise definition of $\T_l$ we refer readers to Definition \ref{def_BCFTG}. We remark that, since $\mathcal H^0\fsp{\T_l}<+\infty$ for each $l\in \N$ fixed, we could evaluate $\CC(\alpha,\B)$ at each element of $\T_l$ and determine the optimal solution(s)
\be
(\alpha_{\T_l},\B_{\T_l})\in\mathbb A[\T_l]:=\argmin\flp{\CC(\alpha,\B):\,\, (\alpha,\B)\in\T_l}
\ee
precisely. The following theorem is established in order to achieve \eqref{accptable_train_result_intro}.
\begin{theorem}[see Theorem \ref{PV_finiteapprox_result}]\label{PV_approx_result_intro}
Let a training ground $\mathbb T$ satisfies above box-constraint. Then the following assertions hold:
\begin{enumerate}[1.]
\item
we have
\be
\lim_{l\to\infty}\operatorname{dist}(\mathbb A[\T],\mathbb A[\T_l])=0.
\ee
\item
Let $\e>0$ be given. Then for each $l\in\N$ we have 
\be
\abs{\CC(\alpha_{\T_l},\B_{\T_l})-\CC(\alpha_\T,\B_\T)}\leq 4KP \fmp{O\fsp{P/l}+1/l}^{1/2}\norm{u_\eta}_{W^{d,1}(Q)}^{1/2}/\e^d+\e/2,
\ee
where the value of right hand side can be computed explicitly.
\end{enumerate}
\end{theorem}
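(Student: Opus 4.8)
The plan is to handle the two assertions by different mechanisms: assertion~1 is a soft compactness–continuity statement, while assertion~2 rests on a quantitative stability estimate for the lower–level solution map $(\alpha,\B)\mapsto u_{\alpha,\B}$. First I would record that the assessment function $\CC(\alpha,\B)=\norm{\uab-u_c}_{L^2(Q)}^2$ is continuous on $\T$: the joint lower semicontinuity of $PV_\B$ in $u$ and $\B$ established earlier, the strong convexity of the fidelity term, and the continuity hypothesis on $\Sigma$ together force $\uab$ to depend continuously on $(\alpha,\B)$ in $L^2(Q)$, hence so does $\CC$. With compactness of $\T$ this gives existence of minimizers, and via the density $\T\subset\operatorname{cl}\fsp{\bigcup_{l\in\N}\T_l}$ the convergence of minimal values $\min_{\T_l}\CC\to\min_\T\CC$. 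Any sequence of discrete minimizers $(\alpha_{\T_l},\B_{\T_l})$ then admits a convergent subsequence whose limit lies in $\mathbb A[\T]$ by continuity, which is exactly $\operatorname{dist}(\mathbb A[\T],\mathbb A[\T_l])\to 0$.

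The core of assertion~2 is a two–sided comparison exploiting that the Level~2 functional $F_{\alpha,\B}(u):=\norm{u-u_\eta}_{L^2}^2+\alpha\,PV_\B(u)$ is strongly convex with modulus $2$ coming from the fidelity term. Writing $u_i:=u_{\alpha_i,\B_i}$, testing the minimality of each $u_i$ against the competitor $u_{3-i}$ and adding the two inequalities cancels the fidelity contributions and leaves
\ba{stability_sketch}
2\norm{u_1-u_2}_{L^2}^2\leq \alpha_1\fmp{PV_{\B_1}(u_2)-PV_{\B_1}(u_1)}+\alpha_2\fmp{PV_{\B_2}(u_1)-PV_{\B_2}(u_2)}.
\ea
I would regroup the right–hand side into pure $\B$–variation terms and a pure $\alpha$–variation term, bound the former by the continuity hypothesis on $\Sigma$ and the latter by $\abs{\alpha_1-\alpha_2}$, and in each term invoke the a priori estimate $PV_\B(\uab)\leq PV_\B(u_\eta)\leq K\norm{u_\eta}_{W^{d,1}(Q)}$ (obtained by testing \eqref{result_level_bi} with $u=u_\eta$ and using that $\B$ is an order–$d$ operator with bound $K$). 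This yields $\norm{u_1-u_2}_{L^2}\leq\fsp{\tfrac{K}{2}\fmp{2P\,O(\abs{\B_1-\B_2})+\abs{\alpha_1-\alpha_2}}\norm{u_\eta}_{W^{d,1}(Q)}}^{1/2}$.

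Converting to the assessment function through $\abs{\CC(\alpha_1,\B_1)-\CC(\alpha_2,\B_2)}\leq\norm{u_1-u_2}_{L^2}\fsp{\norm{u_1-u_c}_{L^2}+\norm{u_2-u_c}_{L^2}}$ and inserting the mesh sizes $\abs{\alpha_1-\alpha_2}\leq 1/l$ and $\abs{\B_1-\B_2}\leq P/l$ of the finite ground $\T_l$ (Definition \ref{def_BCFTG}) produces the factor $\fmp{O(P/l)+1/l}^{1/2}\norm{u_\eta}_{W^{d,1}(Q)}^{1/2}$ together with the prefactor $4KP$, after which the density argument of assertion~1 lets me compare the discrete optimum against the continuous one. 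The remaining $\e$–dependence is where the argument is most delicate: the a priori control of $PV_\B(\uab)$ degenerates as $\alpha\to 0$, where $\uab\to u_\eta$ and no uniform variation bound survives, so to get a modulus of continuity that is both explicit and valid up to the boundary of $\T$ I would regularize at a scale $\e$, replacing $\uab$ by its mollification; this costs an additive error $\e/2$ and inflates the variation bound through $d$–fold differentiation of the mollifier by a factor $\e^{-d}$.

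I expect the main obstacle to be precisely this balance of the discretization error (driven by $l$) against the regularization error (driven by $\e$), since it must be carried out while checking that the regrouping in \eqref{stability_sketch} interacts correctly with the one–term $\min$ on the right–hand side of the continuity hypothesis on $\Sigma$; the constants $K$, $P$, $\norm{u_\eta}_{W^{d,1}(Q)}$ and the modulus $O(\cdot)$ are all explicit, so the resulting right–hand side is computable as claimed. By contrast, the compactness half of the proof is routine once continuity of $\CC$ and compactness of $\T$ are in hand.
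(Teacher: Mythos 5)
Your proposal is correct in substance and reproduces the overall architecture of the paper's argument — a quantitative stability estimate for $(\alpha,\B)\mapsto u_{\alpha,\B}$, the a priori bound $PV_{\B}(u_{\alpha,\B})\leq PV_{\B}(u_\eta)\lesssim K\norm{u_\eta}_{W^{d,1}}$ obtained by testing \eqref{ABsolution_map} with $u_\eta$, comparison of the discrete and continuous optima through the mesh, and a final mollification of the data producing the $\e^{-d}$ inflation and the additive $\e/2$ — but it reaches the key stability lemma by a genuinely different route. The paper derives the analogue of your estimate (Proposition \ref{alpha_12_B_12}, via Corollary \ref{coro_make_in_alpha} and Proposition \ref{thm_estimation_pvb_l2}) from the Euler--Lagrange identities $u_{\B_i}-u_\eta=-\alpha_i\,\partial PV_{\B_i}(u_{\B_i})$, monotonicity of the subdifferential, and then a rather delicate pointwise analysis: the characterization of $\partial PV_\B$ from Theorem \ref{main_sub_diff_pvb}, Lebesgue-point arguments, the positivity normalization \eqref{restricted_noisy_corrupted}, and a constant-shifting trick to remove it. Your energy-method version — testing the minimality of each $u_i$ against $u_{3-i}$, using the strong convexity of the fidelity term to retain $2\norm{u_1-u_2}_{L^2}^2$ on the left, and regrouping the right-hand side into a $\B$-variation part controlled by \eqref{cont_sigma_PV1}--\eqref{cont_sigma_PV2} and an $\alpha$-variation part controlled by $\abs{\alpha_1-\alpha_2}$ — bypasses all of that subgradient machinery and is, as far as I can check, both valid and cleaner; the one-sided forms of the continuity hypothesis suffice for your regrouping. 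Two small corrections: the quantity that must be mollified is the datum $u_\eta$, not the solution $u_{\alpha,\B}$ (one then transfers the error via the non-expansiveness $\norm{u_{\alpha,\B}-u^\e_{\alpha,\B}}_{L^2}\leq\norm{u_\eta-u^\e_\eta}_{L^2}$ of Proposition \ref{prop_approx_fiderly}); and the obstruction is not that the bound ``degenerates as $\alpha\to 0$'' — it is uniform in $\alpha\in(0,P]$ — but simply that $PV_\B(u_\eta)$ may be infinite for $u_\eta\in L^2(Q)$ only. Neither affects the correctness of your plan.
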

That is, for any given $\e>0$, we could compute $l\in\N$ that is large enough so that the corresponding optimal solution $(\alpha_{\T_l},\B_{\T_l})$ is 
an acceptable optimal solution of scheme $\FT$. Also, in Section \ref{primal_dual_rmk} we show that, even with the box-constraint, the training ground $\T$ is still sufficiently large to
encompass interesting operator. We finally remark that, although this work focuses mainly on the theoretical analysis of the operators $PV_\B$ and  the training scheme $\mathcal T$, in Section \ref{primal_dual_rmk} a primal-dual algorithm for solving \eqref{result_level_bi} is discussed, and some preliminary numerical demonstration of scheme $\mathcal T$ are provided.\\\\
Our article is organized as follows. In Section \ref{PABBQ_sec} we analyze the functional properties of the $PV_\B$-seminorms.
The $\Gamma$-convergence result, the bilevel training scheme, and the finite approximation  are the subjects of Sections \ref{sec_PDE_const} and \ref{sec_ts_PGV}, respectively. 
Finally, in Section \ref{primal_dual_rmk} we demonstrate several numerical implementations, and in Section \ref{sec_further_generaliza} some possible extensions of $PV_\B$.
\section{The space of functions with bounded $PV$-seminorm}\label{PABBQ_sec}
Let $d$, $N\in\N$ be given, and let $Q:=(0,1)^N$ be the unit open cube in $\rn$. $\mathbb M^{N_n}$ is the space of matrices with dimension $N\times N\times\cdots \times N$ ($n$ times) with elements in $\R$. For the convenience of the presentation of this article, we identify the matrix space $\mathbb M^{N_n}$ by vector space $\R^{N^n}$, where $N^n=N\cdot N\cdots N$ ($n$ times). Moreover, $\mathcal D'(Q, \R^n)$ represents the space of distributions with values in $\R^n$.
 \begin{notation}\label{operator_B}
We  collect some notation which will be adopted in connection with linear differential operators.
 \begin{enumerate}[1.] 
 \item
For $h\in\N$, we let $H^h$: $\mathcal D'(Q)\to\mathcal D'(Q;\mathbb R^{N^h})$, denote the $h$-th \emph{Hessian} differential operator. For example, when $h=1$ we have $H^1 u =\nabla u$;
\item
For each $h=1,\ldots d$, we let $B^h$ be matrix mapping from $\R^{N^h}$ to $\R^{N^h}$ and 
\be
K:=\sum_{h\in\N,h\leq d}N^h.
\ee
We denote by $\B$: $\mathcal D'(Q)\to \mathcal D'(Q;\rk)$ the $d$-th order differential operator
\be\label{A_quasiconvexity_operator}
\B u:= \sum_{h\in\N,h\leq d}B^h (H^hu);
\ee
\item
For each $h=1,\ldots d$, we denote the formal adjoint of the matrix $B^h$ by $(B^h)^\ast$, and we define the differential operator $\B^\ast$: $\mathcal D'(Q; \rk)\to\mathcal D'(Q)$ by
\be
\fjp{\B^\ast v,u}_{\R}:=\fjp{v,\B u}_{\R^{K}};
\ee
\item
We denote the bilinear operator $\circ_\B$, induced by $\B$, such that
\be\label{distribution_product}
\B(uw)=w\B u+u\circ_\B  w
\ee
\item
Given a sequence of operators $\seqn{\B_n}$ and an operator $\B$, with coefficients $\seqn{B_n}$ and $B$, respectively, we say that $\B_n\to\B$ in $\ell^{\infty}$ if
\be
\abs{\B_n-\B}:=\sum_{h\leq d}\abs{B^h_n-B^h}_{\ell^\infty}\to 0,
\ee
where $\abs{\cdot}_{\ell^\infty}$ stands for the $\ell^\infty$ matrix norm.
%\item
%We use $\Sigma$ to denote a collection of operator $\B$ such that 
%\be
%\Sigma:=\flp{\B:\,\,\norm{\B}\leq 1}.
%\ee
\end{enumerate}
\end{notation}
\begin{define}\label{def_operator_B}
Let $d\in\N$ be fixed. We denote by $\Pi^d$ the collection of operator $\B$ defined in notation \ref{operator_B}, with order at most $d$.
\end{define}
\subsection{The PDE-constraint total variation defined by operator $\B$}
We generalize the standard total variation seminorm by using the $d$-th order differential operators $\B\in\Pi^d$ defined in Definition \ref{def_operator_B}.
\begin{define}\label{def_BVB}
Let $u\in L^1(Q)$ and operator $\B\in\Pi^d$ be given. 
\begin{enumerate}[1.]
\item
We define the \emph{PDE-constraint} seminorm, say $PV_\B$, by
\be\label{BVB_norm}
PV_\B(u):=\sup\flp{\int_Q u\,\B^\ast\vp\,dx:\,\,\vp\in C_c^\infty(Q;\rk),\,\,\abs{\vp}\leq 1};
\ee
\item
We define the space
\be
BV_{\B}(Q):=\flp{u\in L^1(Q):\,\, PV_{\B}(u)<+\infty},
\ee
and we equip it with the norm
\be\label{BVB_norm}
\norm{u}_{BV_\B(Q)}:=\norm{u}_{L^1(Q)}+PV_\B(u).
\ee
\end{enumerate}
\end{define}
In next proposition we collect several preliminary results regarding functions in space $BV_\B(Q)$.
\begin{proposition}\label{lower_semicontity}
Let operator $\B\in\Pi^d$ and $u\in BV_\B(Q)$ be given. 
\begin{enumerate}[1.]
\item
For any sequence $\seqn{u_n}\subset L^1(Q)$ and function $u\in L^1(Q)$ that satisfying one of the following conditions:
\begin{enumerate}[i.]
\item
$\seqn{u_n}$ is locally uniformly integrable and $u_n\to u$ a.e.. 
\item
$u_n\wtos u$ in $\mb(Q)$.
\end{enumerate}
Then, we have
\be\label{eq_liminf_b}
\liminfn\, PV_\B(u_n)\geq PV_\B(u).
\ee
\item
There exists a \emph{Radon} measure $\mu$ and a $\mu$-measurable function $\sigma$: $Q\to \rk$ such that 
\begin{enumerate}[i.]
\item
$\abs{\sigma(x)}=1$ $\mu$-a.e.;
\item
for all $\vp\in C_c^\infty(Q;\rk)$, there holds
\be
\int_Q u\,\B^\ast \vp\,dx=-\int_Q\vp\cdot\sigma\,d\mu.
\ee
\end{enumerate}
\end{enumerate}
\end{proposition}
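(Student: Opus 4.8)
The common thread is that for each fixed $\vp\in C_c^\infty(Q;\rk)$ the field $\B^\ast\vp$ is again smooth and compactly supported, $\B^\ast\vp\in C_c^\infty(Q)$. This follows by unwinding Notation \ref{operator_B}: $\B^\ast\vp=\sum_{h\leq d}(H^h)^\ast (B^h)^\ast\vp$, where each $(B^h)^\ast$ is a constant matrix acting pointwise and each $(H^h)^\ast$ is a constant-coefficient differential operator of order $h$, so neither enlarges the support nor destroys smoothness. Consequently both parts reduce to the study of the linear functionals $I_\vp(u):=\int_Q u\,\B^\ast\vp\,dx$.

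For the first assertion, I would observe that, by Definition \ref{def_BVB}, $PV_\B$ is the pointwise supremum over the admissible fields $\vp$ (those with $\abs{\vp}\leq 1$) of the functionals $I_\vp$, and that a supremum of sequentially continuous functionals is automatically sequentially lower semicontinuous. It therefore suffices to check that each $I_\vp$ is continuous under either hypothesis. In case (i), $\B^\ast\vp$ is bounded with compact support $K\subset Q$; combining $u_n\to u$ a.e.\ on $K$ with the local uniform integrability of $\seqn{u_n}$, Vitali's convergence theorem yields $I_\vp(u_n)\to I_\vp(u)$. In case (ii), $\B^\ast\vp\in C_c(Q)$ is an admissible test function for weak-$\ast$ convergence in $\mb(Q)$, so $I_\vp(u_n)\to I_\vp(u)$ directly. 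In either case $I_\vp(u)=\lim_n I_\vp(u_n)\leq\liminfn PV_\B(u_n)$, and taking the supremum over $\vp$ produces \eqref{eq_liminf_b}.

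For the second assertion, the plan is a Riesz-representation argument. Since $u\in BV_\B(Q)$, the definition of $PV_\B$ together with homogeneity gives $\abs{I_\vp(u)}\leq PV_\B(u)\,\norm{\vp}_{L^\infty}$, so $\vp\mapsto I_\vp(u)$ is a bounded linear functional on the dense subspace $C_c^\infty(Q;\rk)\subset C_0(Q;\rk)$, and it extends uniquely to all of $C_0(Q;\rk)$. The Riesz representation theorem for $\rk$-valued continuous functions then yields a finite $\rk$-valued Radon measure $\lambda$ with $I_\vp(u)=\int_Q\vp\cdot d\lambda$ for every $\vp$. Taking the polar decomposition $\lambda=\sigma\,\abs{\lambda}$, setting $\mu:=\abs{\lambda}$ (a positive Radon measure) and replacing $\sigma$ by $-\sigma$ to absorb the sign, I obtain $\abs{\sigma}=1$ $\mu$-a.e.\ and the claimed identity.

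The reduction above makes both arguments essentially routine; the only delicate point is the limit passage in case (i) of Part 1, where dominated convergence is unavailable and one must genuinely exploit the local uniform integrability through Vitali's theorem. The remaining steps---the density and extension in Part 2, and the existence of the polar decomposition of a vector-valued measure---are standard.
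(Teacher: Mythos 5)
Your proposal is correct and follows essentially the same route as the paper: Part 1 reduces to the sequential continuity of $u\mapsto\int_Q u\,\B^\ast\vp\,dx$ for each fixed test field and then takes the supremum, and Part 2 is the boundedness estimate $\abs{L(\vp)}\leq PV_\B(u)\norm{\vp}_{L^\infty}$ followed by an extension to continuous fields and the Riesz representation theorem. The only differences are cosmetic --- you invoke Vitali's theorem explicitly where the paper leaves the limit passage under hypothesis (i) implicit, and you extend by density and use the polar decomposition of a vector measure where the paper mollifies and cites the Evans--Gariepy form of Riesz, which packages $\mu$ and $\sigma$ together.
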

\begin{proof}
We prove Assertion 1 first. If 
\be
\liminfn\, PV_\B(u_n)=+\infty,
\ee
there is nothing to prove. Assume not, then we have, for arbitrary $\vp\in C_c^\infty(Q;\rk)$, that
\be
\liminfn\, PV_\B(u_n)\geq \liminfn\int_Q u_n\B^\ast\vp\,dx = \int_Q u\,\B^\ast\vp\,dx,
\ee
where the last equality can be deduced either from condition 1(i) or 1(ii), independently. Hence, we conclude \eqref{eq_liminf_b} in view of the arbitrariness of $\vp$. \\\\
We next prove Assertion 2. We define the linear functional $L$: $C_c^\infty(Q;\rk)\to\R$ such that 
\be
L(\vp):=-\int_Q u\,\B^\ast\vp\,dx,\text{ for }\vp\in C_c^\infty(Q;\rk).
\ee
Then, since $u\in BV_\B(Q)$, we have that
\be
\sup\flp{\frac1{\norm{\vp}_{L^\infty(Q)}}\int_Qu\, \B^\ast \vp \,dx:\text{ for }\vp\in C_c^\infty(Q;\rk)}=PV_\B(u)<+\infty,
\ee
which implies that
\be\label{smooth_define}
\abs{L(\vp)}\leq PV_\B(u)\norm{\vp}_{L^\infty(Q)}.
\ee
Now, for arbitrary $\vp\in C_c(Q;\rk)$, we define the mollifications $\vp_\e:=\vp\ast\eta_\e$, for some mollifier $\eta_\e$ with $\e<\operatorname{dist}(\operatorname{spt}(\vp),\partial Q)$. Then we have, by \cite[Theorem 1, item (ii), Section 4.2]{evans2015measure}, that $\vp_\e\to\vp$ uniformly on $Q$. Therefore, by defining
\be
\bar L(\vp):=\lime L(\vp_\e)\text{ for }\vp\in C_c(Q;\rk),
\ee
and together with \eqref{smooth_define}, we conclude that 
\be
\sup\flp{\bar L(\vp):\text{ for }\vp\in C_c(Q;\rk)\text{ and }\abs{\vp}\leq 1}<+\infty.
\ee
Thus, in view of the Riesz representation theorem (see \cite[Section 1.8]{evans2015measure}), the proof is complete.
\end{proof}
\begin{remark}\label{thm_structure_represent}
We henceforth write $\abs{\B u}$ by $\mu$ and have
\be
\int_Q u\,\B^\ast \vp\,dx = -\int_Q \vp\cdot\sigma\,d\abs{\B u}
\ee
for arbitrary $\vp\in C_c^\infty(Q;\rk)$.
\end{remark}
\begin{theorem}[local approximation by smooth functions]\label{smooth_strict_approximation}
Let $p\geq 1$ and $u\in BV_\B(Q)\cap L^p(Q)$ be given. There exists a sequence $\seqn{u_n}\subset C^\infty(Q)\cap BV_\B(Q)$ such that the following assertions hold.
\begin{enumerate}[1.]
\item
$u_n\to u$ strongly in $L^p(Q)$;
\item
$PV_{\B}(u_n)\to PV_\B(u)$.
\item
$u_n\in C^\infty(\bar Q)$ for each $n\in\N$.
\end{enumerate}
\end{theorem}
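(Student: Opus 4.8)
The plan is to reduce the two nontrivial assertions to a matching upper and lower bound for $PV_\B(u_n)$. Once the approximants satisfy $u_n\to u$ in $L^p(Q)$, strong $L^1(Q)$ convergence (which follows since $Q$ is bounded) yields, along a subsequence, a.e.\ convergence together with local uniform integrability, so item 1(i) of Proposition~\ref{lower_semicontity} gives $\liminfn PV_\B(u_n)\ge PV_\B(u)$ for free. The entire difficulty is therefore to build $u_n\in C^\infty(\bar Q)$ with $u_n\to u$ in $L^p(Q)$ and $\limsupn PV_\B(u_n)\le PV_\B(u)$.

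For the interior I would use the De Giorgi--Anzellotti--Giaquinta scheme. Fix $\e>0$, choose an exhaustion $Q_1\Subset Q_2\Subset\cdots$ with $\bigcup_kQ_k=Q$ and $\abs{\B u}(Q\setminus Q_1)<\e$ (possible since, by Proposition~\ref{lower_semicontity}(2) and Remark~\ref{thm_structure_represent}, $\abs{\B u}$ is a finite Radon measure), let $\seqn{A_k}$ be the associated locally finite cover with bounded overlap, and take a subordinate partition of unity $\zeta_k\in C_c^\infty(A_k)$, $\sum_k\zeta_k\equiv1$. Set $u_\e:=\sum_k\eta_{\e_k}\ast(\zeta_ku)$; this locally finite sum lies in $C^\infty(Q)$, and small radii $\e_k$ give $\norm{u_\e-u}_{L^p(Q)}<\e$. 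Since convolution commutes with the constant-coefficient operator $\B$ and $\B(\zeta_ku)=\zeta_k\B u+u\circ_\B\zeta_k$ by \eqref{distribution_product},
\be\label{split_plan}
\B u_\e=\sum_k\eta_{\e_k}\ast(\zeta_k\B u)+\sum_k\eta_{\e_k}\ast(u\circ_\B\zeta_k).
\ee
The first sum is harmless: using $\abs{\eta_{\e_k}\ast(\zeta_k\B u)}\le\eta_{\e_k}\ast(\zeta_k\abs{\B u})$ together with $\sum_k\zeta_k\equiv1$ one obtains $\int_Q\abs{\,\cdot\,}\le\abs{\B u}(Q)=PV_\B(u)$.

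The main obstacle is the second (commutator) sum, and this is exactly where the higher order of $\B$ bites. Unlike the first-order case, where $u\circ_\B\zeta_k=u\,\nabla\zeta_k\in L^1$, for $\B$ of order $d\ge2$ the term $u\circ_\B\zeta_k$ contains derivatives of $u$ up to order $d-1$, which for $u\in BV_\B(Q)$ need not be locally integrable; indeed the separate mollifications $\eta_{\e_k}\ast(u\circ_\B\zeta_k)$ blow up like $\e_k^{-(d-1)}$ as $\e_k\to0$, so no term-by-term smallness is available. The saving identity is the cancellation forced by the partition of unity: since $\circ_\B$ is bilinear and $w\circ_\B1=0$ (immediate from \eqref{distribution_product} applied to $w\cdot1$), one has $\sum_ku\circ_\B\zeta_k=u\circ_\B1=0$ as distributions. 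I would therefore write each $u\circ_\B\zeta_k=\sum_{\abs{\rho}\le d-1}\partial^\rho(c_{k,\rho}u)$ with $c_{k,\rho}\in C_c^\infty(A_k)$ built from derivatives of $\zeta_k$, and exploit this cancellation together with the bounded overlap of $\seqn{A_k}$ and comparable neighbouring radii $\e_k$ to pair terms and bound the second sum of \eqref{split_plan} by $\e$ in $L^1(Q)$. Turning this distributional cancellation into a genuine $L^1$ estimate, despite the divergence of the individual terms, is the technical heart of the argument.

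It remains to pass from $C^\infty(Q)$ to $C^\infty(\bar Q)$. For this I would post-compose the interior-smooth approximant $v:=u_\e$ with the dilation $y\mapsto v(c+s(y-c))$ about the centre $c$ of $Q$, for $s<1$; since $v$ is smooth on $Q$ and $c+s(\bar Q-c)\Subset Q$, the result is smooth on a neighbourhood of $\bar Q$. For a \emph{fixed smooth} $v$ this scaling carries no measure obstruction: $\B\bigl(v(c+s(\cdot-c))\bigr)=(\B_sv)(c+s(\cdot-c))$ with $\B_s:=\sum_{h\le d}s^hB^h H^h\to\B$, and $\B_sv\to\B v$ locally uniformly, so $PV_\B$ of the scaled function tends to $PV_\B(v)$ as $s\to1$. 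A diagonal choice $s=s(\e)\to1$ then delivers $u_n\in C^\infty(\bar Q)$ with $u_n\to u$ in $L^p(Q)$ and $\limsupn PV_\B(u_n)\le PV_\B(u)$, which together with the lower semicontinuity bound proves assertions 1--3.
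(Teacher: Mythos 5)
Your overall route is the same as the paper's: an Anzellotti--Giaquinta-type construction (partition of unity $\zeta_k$ subordinate to an exhaustion of $Q$, mollification at scales $\e_k$, lower semicontinuity from Proposition~\ref{lower_semicontity} for the liminf bound) followed by the dilation $x\mapsto u_\e((x-q)/(1+\delta))$ to upgrade $C^\infty(Q)$ to $C^\infty(\bar Q)$, exactly as in \eqref{easy_checting_u}. Both halves of your plan match the paper's proof in structure, and your treatment of the dilation step and of assertions 1 and 3 is complete and correct.

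The genuine gap is in the commutator estimate, and you have named it yourself without closing it. You correctly observe that for $d\ge 2$ the term $u\circ_\B\zeta_k$ contains derivatives of $u$ up to order $d-1$, which for $u\in BV_\B(Q)$ need not be locally integrable (membership in $BV_\B$ controls only the combination $\sum_h B^hH^hu$, not the individual Hessians), so the term-by-term smallness that works for $TV$ is unavailable. But your proposed remedy --- using the distributional identity $\sum_k u\circ_\B\zeta_k=0$ together with bounded overlap and comparable neighbouring radii to ``pair terms'' --- is only a plan: the individual summands $\eta_{\e_k}\ast(u\circ_\B\zeta_k)$ are mollified at different scales, so the cancellation does not survive mollification for free, and you explicitly defer the $L^1$ estimate that would make it survive. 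That deferred estimate is the whole content of assertion 2, so as written the proof is incomplete. For comparison, the paper's own proof handles this point by imposing condition \eqref{TV_approach_prop}, i.e.\ $\norm{\eta_{\e_k}\ast(u\B\zeta_k)-u\B\zeta_k}_{L^1(Q)}<\e/2^{k+1}$, which tacitly treats the commutator as the zeroth-order quantity $u\,(\B\zeta_k)\in L^1(Q)$; that is exactly the first-order situation of \cite[Theorem 2, p.~172]{evans2015measure} and does not address the higher-order terms you identify. So your diagnosis of the difficulty is sharper than the paper's, but neither your argument nor the paper's actually proves the $\limsup$ inequality for general $\B$ of order $d\ge2$; to complete your version you would need to either carry out the paired estimate in detail or restrict to operators $\B$ for which $u\circ_\B w$ involves only $u$ itself multiplied by derivatives of $w$ (as the paper's references \cite{bbddlgftfb2017,fonseca1993relaxation} effectively do).
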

\textbf{Remark.} Assertion 3 only asserts that for each fixed $n\in \N$ that $u_n\in C^\infty(\bar Q)$ but it is possible that $\norm{u_n}_{L^1(\partial Q)}\to\infty$ as $n\to\infty$. In another word, we make no conclusions with respect to the trace value of $u$ from Theorem \ref{smooth_strict_approximation}. 
\begin{proof}
The construction of approximation sequence $\seqn{u_n}$ is almost same to the approximation sequence used in the standard $BV$ case as presented in \cite[Theorem 2, Page 172]{evans2015measure}. We shall only concentrated on showing that Assertion 3 holds, but for reader's convenience, we shall outline the  construction of approximation sequence and key steps.\\\\
Let $u\in BV_\B(Q)$ be given, and let $Q_k$ be the cube centered at point $q=(1/2,1/2)^N$ with side length $1-1/(k+M)$. Let arbitrary $\e>0$ be given, we choose $M>0$ large enough such that  
\be
\abs{\B u}(Q\setminus Q_{0+M})<\e/2.
\ee
Define $Q_0=Q_{0+M}$ and
\be
V_k:=Q_{k+1}\setminus \bar Q_{k-1}\text{ for }k\in\N.
\ee
Let $\seqk{\zeta_k}\subset C_c^\infty(Q)$ be the partition of unity such that 
\begin{align}
&\zeta_k\in C_c^\infty(V_k)\text{ such that }0\leq \zeta_k\leq 1\\
&\sum_{k\ge1}\zeta_k(x)=1\text{ for each }x\in Q.
\end{align}
Let $\eta_\e$ be the standard mollifier, and for each $k$, we choose $\e_k$ small enough such that 
\begin{align}
& \operatorname{spt}(\eta_{\e_k}\ast(u\,\zeta_{k}))\subset V_k\label{smooth_function_prop}\\
& \norm{\eta_{\e_k}\ast(u\,\zeta_k)-u\,\zeta_k}_{L^p(Q)}<\e/2^{k+1}\label{l1_approach_prop}\\
& \norm{\eta_{\e_k}\ast(u\B\zeta_k)-u\B\zeta_k}_{L^1(Q)}<\e/2^{k+1}\label{TV_approach_prop},
\end{align}
and we define
\be
u_\e:=\sum_{k=1}^\infty \eta_{\e_k}\ast(u\zeta_k).
\ee
We observe that \eqref{smooth_function_prop} implies that $u_\e\in C^\infty(Q)$, and \eqref{l1_approach_prop} implies that 
\be
u_\e\to u\text{ strongly in }L^p(Q).
\ee
This, and together with Assertion 1, Proposition \ref{lower_semicontity}, we conclude that
\be
\liminf_{\e\to 0}PV_\B(u_\e)\geq PV_\B(u).
\ee
Next, for arbitrary $\vp\in C_c^\infty(Q;\rn)$, we observe that, 
\be
\fjp{\eta_{\e_k}\ast(u\,\zeta_k),\B^\ast \vp} = \fjp{u\,\zeta_k,\B^\ast (\eta_{\e_k}\ast\vp)} = \fjp{u,\B^\ast(\zeta_k(\eta_{\xi_k}\ast\vp))}-\fjp{u,(\eta_{\e_k}\ast\vp)\circ_{\B^\ast}\nabla \zeta_k},
\ee
where at the first equality we used the linearity of convolution operator, and at the last equality we used \eqref{distribution_product}. Thus, we have
\be
\fjp{u_\e,\B^\ast\vp} = \sum_{k\geq 1}\fjp{\eta_{\e_k}\ast(u\zeta_k),\B^\ast \vp} =  \sum_{k\geq 1}\fjp{u,\B^\ast(\zeta_k(\eta_{\xi_k}\ast\vp))}- \sum_{k\geq 1}\fjp{u,(\eta_{\e_k}\ast\vp)\circ_{\B}\nabla \zeta_k}.
\ee
Following the same computation used in \cite[Theorem 2, Page 172]{evans2015measure} and use \eqref{TV_approach_prop}, we deduce that 
\be
\fjp{u_\e,\B^\ast\vp}\leq PV_\B(u)+\e,
\ee
Hence, in view of the arbitrariness of $\vp$, we obtain that
\be
\limsup_{\e\to 0} PV_\B(u_\e)\leq PV_\B(u),
\ee
Lastly, we further modify the sequence $\seqe{u_\e}$ so that $u_\e\in C^\infty(\bar Q)$ for each $\e>0$. Let $\delta>0$ be given and we define 
\be\label{easy_checting_u}
u_{\e,\delta}(x):=u_\e((x-q)/(1+\delta))\text{, for }x\in Q.
\ee
Consequentially, we have $u_{\e,\delta}\to u_\e$ in $L^p$ strong and $PV_\B(u_{\e,\delta})\to PV_\B(u_\e)$, as $\delta\to 0$. Hence, by using a diagonal argument, we could extract a subsequence $\seqe{u_{\delta_\e}}$ such that 
\be
u_{\delta_\e}\to u\text{ strongly in }L^p\text{ and }PV_\B(u_{\delta_\e})\to PV_\B(u). 
\ee
On the other hand, by the definition of $u_{\delta_\e}$, we have $u_{\delta_\e}\in C^\infty(\bar Q)$, which concludes Assertion 3 as desired.
\end{proof}
\textbf{Remark.} The construction of $u_{\e,\delta}$ in \eqref{easy_checting_u} is possible because of the simple geometry of domain $Q$. However, for domain with arbitrary geometry, even with Lipschitz boundary, such construction is not available. We refer readers to \cite{bbddlgftfb2017,fonseca1993relaxation} for alternative constructions with, however, operator $\B$ with several additional restriction. 
\begin{corollary}\label{smooth_strict_approximation_finite}
Let a finite set of $\B_i$, $i=1,\ldots M$, be given and 
\be
u\in \bigcap_{i=1}^M BV_{\B_i}(Q).
\ee
Then, there exists a sequence 
\be
\seqn{u_n}\subset C^\infty(Q)\cap \bigcap_{i=1}^M BV_{\B_i}(Q)
\ee
 such that the following assertions hold.
\begin{enumerate}[1.]
\item
$u_n\to u$ strongly in $L^1(Q)$;
\item
$PV_{\B_i}(u_n)\to PV_{\B_i}(u)$, for each $i=1,\ldots,M$ uniformly;
\item
$u_n\in C^\infty(\bar Q)$ for each $n\in\N$.
\end{enumerate}
\end{corollary}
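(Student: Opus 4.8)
The plan is to run the construction of Theorem \ref{smooth_strict_approximation} essentially verbatim, exploiting the crucial feature that the approximating function itself does not depend on the operator being approximated. Concretely, I would fix the same cubes $Q_k$, the same sets $V_k := Q_{k+1}\setminus \bar Q_{k-1}$, the same partition of unity $\seqk{\zeta_k}$, and define a single family
\be
u_\e := \sum_{k=1}^\infty \eta_{\e_k}\ast(u\zeta_k),
\ee
with one common choice of mollification parameters $\e_k$. The only places where the operator entered the proof of Theorem \ref{smooth_strict_approximation} were through the truncation estimate $\abs{\B u}(Q\setminus Q_0)<\e/2$ and the commutator estimate \eqref{TV_approach_prop} on $u\B\zeta_k$; since here we deal with the finite family $\B_1,\ldots,\B_M$, both requirements can be met for all operators at once.

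For the parameter selection I would first choose the truncation parameter large enough that $\abs{\B_i u}(Q\setminus Q_0)<\e/2$ holds simultaneously for every $i=1,\ldots,M$ (a finite intersection of conditions), and then, at each level $k$, choose $\e_k$ small enough that, in addition to \eqref{smooth_function_prop} and the $L^1$-estimate \eqref{l1_approach_prop} on $u\zeta_k$, the $M$ commutator estimates
\be
\norm{\eta_{\e_k}\ast(u\B_i\zeta_k)-u\B_i\zeta_k}_{L^1(Q)}<\e/2^{k+1},\qquad i=1,\ldots,M,
\ee
all hold at the same time. This is again possible because it is a finite list of conditions, each secured by the standard $L^1$-convergence of mollifications for $\e_k$ small. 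With this single $u_\e$ in hand, the lower bound $\liminf_{\e\to0}PV_{\B_i}(u_\e)\geq PV_{\B_i}(u)$ follows for each $i$ directly from Assertion 1 of Proposition \ref{lower_semicontity}, and the upper-bound computation of Theorem \ref{smooth_strict_approximation}, carried out with $\B$ replaced by $\B_i$, yields $PV_{\B_i}(u_\e)\leq PV_{\B_i}(u)+\e$ with the \emph{same} additive error $\e$ for all $i$.

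The uniformity asserted in Assertion 2 is then immediate: the two bounds above hold with a common error across the finite index set $i=1,\ldots,M$, so $PV_{\B_i}(u_\e)\to PV_{\B_i}(u)$ as $\e\to0$ at a rate independent of $i$. To upgrade $u_\e\in C^\infty(Q)$ to $u_\e\in C^\infty(\bar Q)$ I would reuse the dilation \eqref{easy_checting_u}, setting $u_{\e,\delta}(x):=u_\e((x-q)/(1+\delta))$; since each $\B_i$ has order at most $d$, this change of variables rescales $\abs{\B_i u_{\e,\delta}}$ by bounded factors tending to $1$ as $\delta\to0$, whence $PV_{\B_i}(u_{\e,\delta})\to PV_{\B_i}(u_\e)$ for every $i$. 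A single diagonal extraction over the finite family then produces $\seqn{u_n}\subset C^\infty(\bar Q)$ with $u_n\to u$ in $L^1(Q)$ and $PV_{\B_i}(u_n)\to PV_{\B_i}(u)$ for all $i$ simultaneously.

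The point to watch — rather than a genuine obstacle — is bookkeeping: one must ensure that the \emph{same} sequence serves all $M$ operators and that the error bound is common to every $i$, since this is exactly what makes the convergence in Assertion 2 uniform over the index set. No new analytic difficulty arises beyond the single-operator case, precisely because finiteness of the family reduces every operator-dependent smallness requirement to a finite intersection of conditions already handled in Theorem \ref{smooth_strict_approximation}.
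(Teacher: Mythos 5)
Your proposal is correct and follows essentially the same route as the paper, which likewise simply strengthens the commutator estimate \eqref{TV_approach_prop} to hold simultaneously for all $M$ operators (the paper requires the sum over $i$ of the $M$ errors to be below $\e/2^{k+1}$, which is equivalent in effect to your finite intersection of conditions) and then repeats the argument of Theorem \ref{smooth_strict_approximation}. Your additional care with the truncation parameter and the dilation step only makes explicit what the paper leaves to ``the rest follows with the same argument.''
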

\begin{proof}
We only need to change \eqref{TV_approach_prop} to 
\be
\sum_{i=1}^M\norm{\eta_{\e_k}\ast(u\B_i\zeta_k)-u\B_i\zeta_k}_{L^1(Q)}<\e/2^{k+1},
\ee
and the rest follows with the same argument used in the proof of Theorem \ref{smooth_strict_approximation}.
\end{proof}

We close this section by stating the \emph{l.s.c.} result of $PV_\B$ semi-norm.
\begin{proposition}\label{thm_asymptitic_PGV}
Let $u\in L^1(Q)$ and sequence $\seqn{\B_n}$ such that $\B_n\to \B$ in $\ell^{\infty}$ be given. Then, we have that
\be
\limn PV_{\B_n}(u)\geq PV_{\B}(u).
\ee
\end{proposition}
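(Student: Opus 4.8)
The plan is to work directly from the variational definition of the seminorm in Definition \ref{def_BVB} and to exploit the fact that the competitor test functions can be frozen \emph{before} sending $n\to\infty$. First I would fix an arbitrary $\vp\in C_c^\infty(Q;\rk)$ with $\abs{\vp}\le 1$. Since the same $\vp$ is admissible in the supremum defining $PV_{\B_n}(u)$ for every $n$, we have the elementary bound
\be
\int_Q u\,\B_n^\ast\vp\,dx\le PV_{\B_n}(u)\qquad\text{for all }n\in\N.
\ee
Hence it suffices to establish, for this fixed $\vp$, the convergence of the linear functionals
\be
\int_Q u\,\B_n^\ast\vp\,dx\longrightarrow \int_Q u\,\B^\ast\vp\,dx,
\ee
because then $\int_Q u\,\B^\ast\vp\,dx=\limn\int_Q u\,\B_n^\ast\vp\,dx\le \liminfn PV_{\B_n}(u)$, and taking the supremum over all admissible $\vp$ yields $\liminfn PV_{\B_n}(u)\ge PV_\B(u)$. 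This is the asserted inequality (in fact with $\liminf$ in place of $\lim$, hence also whenever the limit exists).

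The key step is an $L^\infty$ estimate on $(\B_n^\ast-\B^\ast)\vp$. Writing $\vp=(\vp^1,\dots,\vp^d)$ according to the splitting $\rk=\prod_{h\le d}\R^{N^h}$, the definition of the adjoint gives $\B^\ast\vp=\sum_{h\le d}(H^h)^\ast (B^h)^\ast\vp^h$, so that
\be
(\B_n^\ast-\B^\ast)\vp=\sum_{h\le d}(H^h)^\ast\fsp{(B^h_n-B^h)^\ast\vp^h}.
\ee
Because each $B^h$ is a \emph{constant} matrix, it commutes with the differential operator $(H^h)^\ast$; distributing the derivatives onto $\vp^h$ and estimating entrywise produces a bound of the form
\be
\norm{(\B_n^\ast-\B^\ast)\vp}_{L^\infty(Q)}\le C_\vp\,\abs{\B_n-\B},
\ee
where $\abs{\B_n-\B}$ is the $\ell^\infty$ quantity introduced in Notation \ref{operator_B} and the constant $C_\vp$ depends only on $N$, $d$, and the norms of the derivatives of $\vp$ up to order $d$. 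Since $u\in L^1(Q)$ and $(\B_n^\ast-\B^\ast)\vp$ is a fixed smooth compactly supported function controlled in $L^\infty$, I would then conclude
\be
\abs{\int_Q u\,(\B_n^\ast-\B^\ast)\vp\,dx}\le \norm{u}_{L^1(Q)}\,C_\vp\,\abs{\B_n-\B}\longrightarrow 0,
\ee
as $\abs{\B_n-\B}\to0$ by hypothesis, which is exactly the required convergence.

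I expect the only genuine subtlety to be conceptual rather than computational. One cannot hope to bound $\abs{PV_{\B_n}(u)-PV_\B(u)}$ uniformly over the whole admissible class, since $\B^\ast$ differentiates $\vp$ and these derivatives are unbounded as $\vp$ ranges over $\flp{\abs{\vp}\le 1}$; any attempt to estimate the seminorms directly against each other will therefore stall. The lower-semicontinuity structure is precisely what rescues the argument: by freezing $\vp$ first the constant $C_\vp$ is finite, and only afterwards do we let $n\to\infty$. The constancy of the coefficient matrices $B^h$, which lets them commute with $(H^h)^\ast$, is what makes the $L^\infty$ estimate on $(\B_n^\ast-\B^\ast)\vp$ immediate; if the $B^h$ were spatially varying one would instead need to track the Leibniz terms via the product rule $\circ_\B$ from \eqref{distribution_product}, but that refinement is not needed here.
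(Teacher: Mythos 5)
Your proposal is correct and follows essentially the same route as the paper: fix an admissible test function $\vp$, use it as a competitor for each $PV_{\B_n}(u)$, pass to the limit in $\int_Q u\,\B_n^\ast\vp\,dx$, and take the supremum over $\vp$ at the end. The only difference is that you explicitly justify the convergence $\int_Q u\,\B_n^\ast\vp\,dx\to\int_Q u\,\B^\ast\vp\,dx$ via the $L^\infty$ bound $\norm{(\B_n^\ast-\B^\ast)\vp}_{L^\infty}\le C_\vp\abs{\B_n-\B}$ (using that the coefficient matrices are constant), a step the paper asserts without proof.
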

\begin{proof}
First of all, if 
\be
\liminfn \,PV_{\B_n}(u)=+\infty,
\ee
then there is nothing to prove. Suppose 
\be
\sup\flp{PV_{\B_n}(u):\,\,n\in\N}:=M<+\infty,
\ee
then, for arbitrary $\vp\in C_c^\infty(Q;\rk)$, we have that
\be
+\infty>\liminfn PV_{\B_n}(u)\geq\liminfn\int_Q u\,\B_n^\ast\vp\,dx = \int_Qu\,\B^\ast\vp\,dx.
\ee
Hence, by taking supremum with respect to $\vp$ on the right hand side of above inequality, we conclude that 
\be
\liminfn PV_{\B_n}(u)\geq PV_{\B}(u),
\ee 
as desired.
\end{proof}

%The following compactness property applies to a collection of operators $\seqn{\B_n}\subset \Pi$.
%\begin{proposition}\label{assum_basic_BPi}
%Let $\seqn{u_n,\B_n}\subset L^1(Q)\times\Pi$ be such that 
%\be\label{eq_assum_basic_BPi}
%\sup\flp{\norm{\B_n}_{\ell^{\infty}}+\norm{u_n}_{L^p(Q)}+PV_{\B_n}(u_n):\,\, n\in\N}<+\infty.
%\ee
%Then there exists $\B_0\in\Pi$ and $u_0\in BV_{\B_0}(Q)$ such that, up to a subsequence (not relabeled), 
%\be\label{eq_v0_unif}
%u_n\to u_0\text{ weakly in }L^p(Q),
%\ee
%and 
%\be\label{eq_B_n_uniform_bdd2}
%\liminfn\,PV_{\B_n}(u_n)\geq PV_{\B_0}(u_0).
%\ee
%\end{proposition}

\section{Analytic properties of PDE-constraint variations}\label{sec_PDE_const}
\subsection{$\Gamma$-convergence of functionals defined by $PV$ seminorms}\label{gamma_conv_sec}
In this section we prove a $\Gamma$-convergence result with respect to the intensity parameter $\alpha$ and operator $\B$. 
\begin{define}
\label{def:fractional TGV functional}
We define the functional $\mathcal I_{\alpha,\B}$ :$L^1(Q)\to [0,+\infty]$ as
\be
\mathcal I_{\alpha,\B}(u):=
\begin{cases}
\norm{u-u_\eta}_{L^2(Q)}^2+ \alpha PV_\B(u)&\text{ if }u\in BV_\B(Q),\\
+\infty &\text{ otherwise. }
\end{cases}
\ee
\end{define}
The following theorem is the main result of this section.
\begin{theorem}\label{thm_Gamma_conv}
Let sequences $\seqn{\B_n}$ and $\seqn{\alpha_n}$ be given such that $\B_n\to \B_0$ in $\ell^{\infty}$ and $\alpha_n\to \alpha_0\in\R^+$. Then, the functional $\mathcal I_{\alpha_n,\B_n}$ $\Gamma$-converges to $\mathcal I_{\alpha,\B}$ in the weak $L^2$ topology. To be precise, for every $u\in L^1(Q)$ the following two conditions hold:\\

\emph{(Lower semi-continuity)} If
\be u_n\wto u\text{ weakly in }L^2(Q)\ee
then 
\be
\mathcal I_{\alpha,\B}(u)\leq \liminf_{n\to +\infty}\mathcal I_{\alpha_n,\B_n}(u_n).
\ee
\emph{(Recovery sequence)} For each $u\in BV(Q)$, there exists $\seqn{u_n}\subset L^1(Q)$ such that 
\be u_n\wto u\text{ weakly in }L^2(Q)\ee
and 
\be
\limsup_{n\to +\infty}\,\mathcal I_{\alpha_n,\B_n}(u_n)\leq \mathcal I_{\alpha,\B}(u).
\ee
\end{theorem}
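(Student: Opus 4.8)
The plan is to establish the two halves of the $\Gamma$-convergence statement separately, using the machinery already developed in Section \ref{PABBQ_sec}. The lower semi-continuity half should be nearly immediate from the lower semi-continuity results we already possess, while the recovery sequence will require a diagonalization of the smooth approximation from Theorem \ref{smooth_strict_approximation} together with a careful handling of the joint limit $(\alpha_n,\B_n)\to(\alpha_0,\B_0)$.

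\textbf{Lower semi-continuity.} Suppose $u_n\wto u$ weakly in $L^2(Q)$; I may assume $\liminf_{n\to\infty}\mathcal I_{\alpha_n,\B_n}(u_n)<+\infty$, since otherwise there is nothing to prove, and by passing to a subsequence I may assume the liminf is a limit and that each $u_n\in BV_{\B_n}(Q)$. The fidelity term $\norm{u_n-u_\eta}_{L^2(Q)}^2$ is weakly lower semi-continuous in $L^2$, so $\norm{u-u_\eta}_{L^2(Q)}^2\leq\liminf_{n\to\infty}\norm{u_n-u_\eta}_{L^2(Q)}^2$. For the regularizer, I would argue directly from the dual definition \eqref{BVB_norm}: for fixed $\vp\in C_c^\infty(Q;\rk)$ with $\abs{\vp}\leq 1$, using $\alpha_n\to\alpha_0$ and $\B_n^\ast\vp\to\B_0^\ast\vp$ uniformly (a consequence of $\B_n\to\B_0$ in $\ell^\infty$), together with $u_n\wto u$ in $L^2$, I obtain $\int_Q u\,\B_0^\ast\vp\,dx=\lim_{n\to\infty}\alpha_n\int_Q u_n\,\B_n^\ast\vp\,dx/\alpha_0$ in the appropriate sense; taking the supremum over admissible $\vp$ yields $\alpha_0 PV_{\B_0}(u)\leq\liminf_{n\to\infty}\alpha_n PV_{\B_n}(u_n)$. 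This is essentially the combination of the arguments in Proposition \ref{lower_semicontity} and Proposition \ref{thm_asymptitic_PGV}, now carried out with both $u$ and $\B$ varying simultaneously. Adding the two lower bounds gives the claim.

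\textbf{Recovery sequence.} Given $u\in BV(Q)$, the cheapest strategy is the constant sequence $u_n:=u$, which trivially satisfies $u_n\wto u$. One must then verify $\limsup_{n\to\infty}\mathcal I_{\alpha_n,\B_n}(u)\leq\mathcal I_{\alpha_0,\B_0}(u)$; the fidelity terms are equal, so it suffices to show $\limsup_{n\to\infty}\alpha_n PV_{\B_n}(u)\leq\alpha_0 PV_{\B_0}(u)$. Here the continuity estimate imposed on the collection $\Sigma$, namely $\abs{PV_{\B_1}(u)-PV_{\B_2}(u)}\leq O(\abs{\B_1-\B_2})\min\flp{PV_{\B_1}(u),PV_{\B_2}(u)}$, is exactly what upgrades the one-sided bound of Proposition \ref{thm_asymptitic_PGV} to genuine convergence $PV_{\B_n}(u)\to PV_{\B_0}(u)$, so that the constant recovery sequence works. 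If one does not wish to invoke that box-constraint estimate at this stage, the alternative is to take a smooth approximation $\seqk{u_k}$ of $u$ from Theorem \ref{smooth_strict_approximation} with $PV_{\B_0}(u_k)\to PV_{\B_0}(u)$, and to diagonalize against the index $n$; for each fixed smooth $u_k$ one controls $PV_{\B_n}(u_k)$ via the smooth representation $\int_Q u_k\,\B_n^\ast\vp\,dx=\int_Q\B_n u_k\cdot\vp\,dx$, so that $PV_{\B_n}(u_k)=\norm{\B_n u_k}_{L^1}\to\norm{\B_0 u_k}_{L^1}=PV_{\B_0}(u_k)$ by uniform convergence of coefficients.

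\textbf{Main obstacle.} The delicate point is the recovery inequality, specifically guaranteeing $\limsup_{n\to\infty}PV_{\B_n}(u)\leq PV_{\B_0}(u)$ for the \emph{same} limit $u$. Proposition \ref{thm_asymptitic_PGV} only supplies the \emph{opposite} inequality $\liminf_{n\to\infty}PV_{\B_n}(u)\geq PV_{\B_0}(u)$, so the $\limsup$ bound is genuinely new information and cannot come from lower semi-continuity alone. This is precisely where the continuity assumption on $\Sigma$ (or, equivalently, the diagonalization through smooth functions where $PV_{\B_n}$ reduces to the $L^1$ norm of $\B_n u_k$) must do the work; I expect the cleanest route is to reduce to smooth $u_k$, exploit that $\B_n^\ast\vp\to\B_0^\ast\vp$ uniformly makes $PV_{\B_n}$ continuous in $\B_n$ on smooth functions, and then pass to the diagonal, checking that the weak $L^2$ convergence $u_{k(n)}\wto u$ is preserved along the extracted subsequence.
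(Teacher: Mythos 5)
Your proposal is essentially correct, and your fallback route coincides with the paper's own argument: the liminf half is exactly Proposition \ref{compact_semi_para} (split $\int_Q u_n\B_n^\ast\vp\,dx$ into $\int_Q u_n\B_0^\ast\vp\,dx$ plus an error controlled by $\sup_n\norm{u_n}_{L^2}\cdot\norm{\B_n^\ast\vp-\B_0^\ast\vp}_{L^2}$, then take the supremum over $\vp$), and the limsup half is Proposition \ref{new_equal} (prove $\limsup_n PV_{\B_n}(u_0)\leq PV_{\B_0}(u_0)$ first for $u_0\in C^\infty(\bar Q)$ via near-optimal test functions and dominated convergence, then diagonalize against the smooth approximation of Theorem \ref{smooth_strict_approximation}). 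You also correctly isolate the genuine content of the theorem, namely that the limsup bound is the opposite of what Proposition \ref{thm_asymptitic_PGV} gives. The one caveat concerns your \emph{primary} suggestion for the recovery sequence: taking $u_n:=u$ constant and invoking the continuity estimate $\abs{PV_{\B_1}(u)-PV_{\B_2}(u)}\leq O(\abs{\B_1-\B_2})\min\{PV_{\B_1}(u),PV_{\B_2}(u)\}$ is not available here, because that estimate is part of Assumption \ref{kappa_training_ground}, which is only imposed later for the finite-approximation results and is not a hypothesis of Theorem \ref{thm_Gamma_conv}; moreover, without it there is no guarantee that $u\in BV_{\B_n}(Q)$ for every $n$, so the constant sequence could make $\mathcal I_{\alpha_n,\B_n}(u)=+\infty$. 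Since you explicitly flag this and supply the assumption-free diagonalization as the alternative, your proof is sound; just note that the diagonalization is not optional but the only route in the stated generality (and, as in the paper, it produces the recovery sequence only along a subsequence of $\seqn{\B_n}$).
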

We subdivide the proof of Theorem \ref{thm_Gamma_conv} into two propositions.\\\\
The following proposition is instrumental for establishing the liminf inequality.
\begin{proposition}\label{compact_semi_para}
Let sequences $\seqn{\B_n}$ and $\seqn{\alpha_n}$ be given such that $\B_n\to \B_0$ in $\ell^{\infty}$ and $\alpha_n\to \alpha_0\in\R^+$. Let $\seqn{u_n}\subset L^1(Q)$ be given such that there exists $p\in(1,+\infty]$ and 
\be\label{liminf_unf_upper}
\sup\flp{\norm{u_n}_{L^p(Q)}+PV_{\B_n}(u_n):\,\, n\in\N}<+\infty.
\ee
Then there exists $u_0\in BV_{\B_0}(Q)$ such that, up to the extraction of a  subsequence (not relabeled),
\be\label{weak_BV_liminf}
u_n\wto  u_0\text{ weakly}\text{ in }L^p(Q)
\ee
and 
\be\label{lsc_PV_Bn}
\liminf_{n\to\infty}PV_{\B_n}(u_n)\geq PV_{\B_0}(u_0).
\ee
\end{proposition}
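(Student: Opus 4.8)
The plan is to first extract a weak limit using the uniform $L^p$ bound, and then to establish the lower semicontinuity inequality by testing against smooth compactly supported functions and passing to the limit in both the sequence $u_n$ and the operators $\B_n$ simultaneously.

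\emph{Step 1 (Extraction of a weak limit).} By the assumption \eqref{liminf_unf_upper}, the sequence $\seqn{u_n}$ is bounded in $L^p(Q)$ for some $p\in(1,+\infty]$. When $p\in(1,+\infty)$, the space $L^p(Q)$ is reflexive, so by the Banach--Alaoglu theorem we may extract a subsequence (not relabeled) and find $u_0\in L^p(Q)$ with $u_n\wto u_0$ weakly in $L^p(Q)$; when $p=+\infty$ we instead use weak-$*$ compactness of bounded sets in $L^\infty(Q)$. In either case, since $Q$ is bounded, $u_0\in L^1(Q)$, and this establishes \eqref{weak_BV_liminf}. It then remains to verify that $u_0\in BV_{\B_0}(Q)$, which will follow once \eqref{lsc_PV_Bn} is proved and the right-hand side is shown to be finite.

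\emph{Step 2 (Liminf inequality via test functions).} The core of the argument is to fix an arbitrary $\vp\in C_c^\infty(Q;\rk)$ with $\abs{\vp}\leq 1$ and to pass to the limit in
\be
\int_Q u_n\,\B_n^\ast\vp\,dx.
\ee
I would split this into two effects. First, since $\B_n\to\B_0$ in $\ell^\infty$, the formal adjoints converge coefficientwise, and because $\vp$ is smooth with all its derivatives up to order $d$ bounded, we have $\B_n^\ast\vp\to\B_0^\ast\vp$ strongly in (say) $L^{p'}(Q)$, where $p'$ is the conjugate exponent. Second, $u_n\wto u_0$ weakly in $L^p(Q)$. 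Combining the weak convergence of $u_n$ with the strong convergence of $\B_n^\ast\vp$ yields
\be
\liminf_{n\to\infty}\int_Q u_n\,\B_n^\ast\vp\,dx = \int_Q u_0\,\B_0^\ast\vp\,dx.
\ee
Since the left-hand side is bounded above by $\liminf_{n\to\infty}PV_{\B_n}(u_n)$ by the very definition \eqref{BVB_norm} of $PV_{\B_n}$, taking the supremum over all admissible $\vp$ on the right gives \eqref{lsc_PV_Bn}; finiteness of the liminf (which we may assume, else there is nothing to prove) then places $u_0$ in $BV_{\B_0}(Q)$.

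\emph{Main obstacle.} The step requiring the most care is the \emph{simultaneous} passage to the limit in Step 2, where the product of weakly convergent $u_n$ and the varying test-object $\B_n^\ast\vp$ must be controlled. Weak-times-weak convergence of products fails in general, so the argument genuinely relies on upgrading $\B_n^\ast\vp\to\B_0^\ast\vp$ to \emph{strong} convergence in the dual space $L^{p'}$. This is the point where the $\ell^\infty$-convergence of the coefficients is essential: it guarantees that the finitely many matrix coefficients $B_n^h$ converge uniformly, so that $\B_n^\ast\vp-\B_0^\ast\vp$ is a finite sum of terms $\bigl((B_n^h)^\ast-(B^h)^\ast\bigr)(H^h\vp)$, each of which tends to zero uniformly on $Q$ since $H^h\vp$ is a fixed bounded smooth function. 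Uniform convergence on the bounded domain $Q$ then yields $L^{p'}$ convergence, closing the gap. The remaining bookkeeping—checking that the subsequence chosen in Step 1 is compatible with the test-function argument, and that the supremum over $\vp$ interacts correctly with the liminf—is routine.
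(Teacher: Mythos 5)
Your proposal is correct and follows essentially the same route as the paper: extract a weak $L^p$ limit from the uniform bound, then for each fixed test function $\vp$ split $\int_Q u_n\,\B_n^\ast\vp\,dx$ into the weak--strong pairing $\int_Q u_n\,\B_0^\ast\vp\,dx$ plus an error controlled by $\norm{u_n}_{L^p}\norm{\B_n^\ast\vp-\B_0^\ast\vp}_{L^{p'}}$, which vanishes because the finitely many coefficients converge in $\ell^\infty$ against the fixed bounded derivatives of $\vp$. The only cosmetic difference is that you treat $p=+\infty$ via weak-$\ast$ compactness explicitly and note that finiteness of the liminf is in fact guaranteed by the hypothesis \eqref{liminf_unf_upper} rather than merely assumable.
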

\begin{proof}
Without loss of generality  we assume that $\alpha_n=1$ for every $n\in\N$, as the general case for $\alpha_n$ and $\alpha_0\in \R^+$ can be argued with straightforward adaptations. \\\\
From \eqref{liminf_unf_upper} and the fact $p>1$ we have, up to a subsequence, that there exists $u_0\in L^p(Q)$ such that \eqref{weak_BV_liminf} holds.\\\\
Next, for arbitrary $\vp\in C_c^\infty(Q;\rk)$, we observe that
\begin{align}\label{cuowei_liminf}
&\limsup_{n\to\infty}\abs{\int_Q u_n\B^\ast_n\vp\,dx-\int_Q u_n\B^\ast_0\vp\,dx}\\
&\leq \limsup_{n\to\infty} \int_Q \abs{u_n}\abs{\B^\ast_n\vp-\B^\ast_0\vp}\,dx\leq \fsp{\sup_{n\geq 0}\norm{u_n}_{L^p}}\fsp{\limsup_{n\to\infty}\norm{\B^\ast_n\vp-\B^\ast_0\vp}_{L^{p'}}}= 0,
\end{align}
where at the last we used the fact that $\vp\in C_C^\infty(Q;\rk)$ and the Lebesgue dominated convergence theorem.\\\\
Hence, we could obtain that 
\begin{align}
&\liminfn PV_{\B_n}(u_n)\geq \liminfn\int_Qu_n\B_n^\ast\vp\,dx\\
 &\geq  \liminfn\int_Qu_n\B_0^\ast\vp\,dx+ \liminfn\int_Qu_n(\B_n^\ast-\B_0^\ast)\vp\,dx\geq  \int_Qu_0\B_0^\ast\vp\,dx,
\end{align}
where at the last inequality we used \eqref{weak_BV_liminf} and \eqref{cuowei_liminf}. Thus, by the arbitrarness of $\vp\in C_c^\infty(Q;\rk)$, we conclude \eqref{lsc_PV_Bn}, and hence the thesis.
\end{proof}
\begin{proposition}[$\Gamma$-$\limsup$ inequality]\label{new_equal}
Let sequences $\seqn{\B_n}$ and $\seqn{\alpha_n}$ be given such that $\B_n\to \B_0$ in $\ell^{\infty}$ and $\alpha_n\to \alpha_0\in\R^+$. Then, for every $u_0\in BV_{\B_0}(Q)$ there exist $\seqn{u_n}\subset BV_{\B_n}(Q)$ and, up to a subsequence of $\seqn{\B_n}$, such that $u_n\wto u_0$ in $L^p$ and
\be
\limsup_{n\to\infty}PV_{\B_n}(u_n)\leq PV_{\B_0}(u_0).
\ee
\end{proposition}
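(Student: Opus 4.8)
The plan is to construct the recovery sequence by first settling the case of smooth functions---for which each operator $\B_n$ acts as a genuine $L^1$-valued map, so that the $\ell^\infty$ convergence of the coefficients transfers immediately---and then to reach a general $u_0\in BV_{\B_0}(Q)$ by a diagonal argument against the smooth approximation furnished by Theorem \ref{smooth_strict_approximation}.

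First I would treat a fixed $w\in C^\infty(\bar Q)$. Since every test field $\vp\in C_c^\infty(Q;\rk)$ has compact support, integrating by parts produces no boundary contribution and, by the definition of the formal adjoint in Notation \ref{operator_B}, $\int_Q w\,\B_n^\ast\vp\,dx=\int_Q(\B_n w)\cdot\vp\,dx$; taking the supremum over $\abs{\vp}\le 1$ and invoking duality gives $PV_{\B_n}(w)=\norm{\B_n w}_{L^1(Q)}$, and likewise $PV_{\B_0}(w)=\norm{\B_0 w}_{L^1(Q)}$. As $w\in C^\infty(\bar Q)$, each Hessian $H^h w$ with $h\le d$ is bounded on $\bar Q$ and hence lies in $L^1(Q)$, so the reverse triangle inequality together with the expansion $\B u=\sum_{h\le d}B^h(H^h u)$ yields
\[
\abs{PV_{\B_n}(w)-PV_{\B_0}(w)}\le \norm{(\B_n-\B_0)w}_{L^1(Q)}\le C\,\abs{\B_n-\B_0}\,\max_{h\le d}\norm{H^h w}_{L^1(Q)},
\]
for a dimensional constant $C$. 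Since $\abs{\B_n-\B_0}\to 0$, this shows $\limn PV_{\B_n}(w)=PV_{\B_0}(w)$, i.e. the constant sequence already recovers $PV_{\B_0}(w)$ in the smooth case.

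Next I would pass to a general $u_0\in BV_{\B_0}(Q)\cap L^p(Q)$. By Theorem \ref{smooth_strict_approximation} there is a sequence $\seqk{w_k}\subset C^\infty(\bar Q)\cap BV_{\B_0}(Q)$ with $w_k\to u_0$ strongly in $L^p(Q)$ and $PV_{\B_0}(w_k)\to PV_{\B_0}(u_0)$; here Assertion 3 of that theorem is essential, since it is precisely the $C^\infty(\bar Q)$ regularity that keeps every $\norm{H^h w_k}_{L^1(Q)}$ finite and makes the smooth-case estimate above applicable to each $w_k$. I would then diagonalize: for each $k$, using $\limn PV_{\B_n}(w_k)=PV_{\B_0}(w_k)$, pick $N_k$ strictly increasing in $k$ with $PV_{\B_n}(w_k)\le PV_{\B_0}(w_k)+1/k$ for all $n\ge N_k$, and set $u_n:=w_k$ for $N_k\le n<N_{k+1}$. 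Then $u_n\to u_0$ strongly, hence weakly, in $L^p(Q)$, while for $n\in[N_k,N_{k+1})$ one has $PV_{\B_n}(u_n)\le PV_{\B_0}(w_k)+1/k$, so that $\limsupn PV_{\B_n}(u_n)\le \limk\fsp{PV_{\B_0}(w_k)+1/k}=PV_{\B_0}(u_0)$. This produces the required sequence, and in fact no subsequence of $\seqn{\B_n}$ is needed.

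The one genuine obstacle is that for a general $u_0$ one cannot compare $PV_{\B_n}(u_0)$ with $PV_{\B_0}(u_0)$ directly: the operators differ at every order $h\le d$, whereas membership in $BV_{\B_0}(Q)$ controls only the combined measure $\B_0 u_0$ and says nothing about the individual Hessians $H^h u_0$, which need not be measures. Smoothing dissolves this difficulty by turning each $H^h w_k$ into a bona fide integrable function, after which the $\ell^\infty$-closeness of the coefficients controls the discrepancy term by term; the $C^\infty(\bar Q)$ regularity from Assertion 3 is exactly what guarantees these individual norms remain finite.
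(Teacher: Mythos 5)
Your proof is correct and follows essentially the same route as the paper's: reduce to the smooth case, invoke Theorem \ref{smooth_strict_approximation} to approximate a general $u_0$ by functions in $C^\infty(\bar Q)$, and diagonalize. The only difference is in the smooth step, where the paper uses near-optimal test functions and dominated convergence to get the $\limsup$ bound, while you use the identity $PV_{\B_n}(w)=\norm{\B_n w}_{L^1(Q)}$ and the reverse triangle inequality to get a quantitative estimate and hence full convergence $PV_{\B_n}(w)\to PV_{\B_0}(w)$ --- a slightly cleaner variant that also avoids passing to a subsequence of $\seqn{\B_n}$.
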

\begin{proof}
If $PV_{\B_0}(u)=\infty$, there is nothing to prove. Suppose not, and assume for a moment that $u_0\in C^\infty(\bar Q)$, which indicates that $u_0\in BV_{\B_n}(Q)$ for each $n\in\N$. Fix $\delta>0$, and chose $\vp_{\delta,n}\in C_c^\infty(Q;\rk)$ such that 
\be\label{limsup_Bn1}
PV_{\B_n}(u)\leq \int_Qu\,\B_n^\ast \vp_{\delta,n} dx+\delta.
\ee
We observe that
\be\label{limsup_Bn2}
\abs{\int_Qu_0\B_n^\ast \vp_{\delta,n} dx} = \abs{\int_Q[\B_nu_0]\vp_{\delta,n}dx}\leq \norm{\vp_{\delta,n}}_{L^\infty(Q)}\int_Q\abs{\B_nu_0}dx\leq \int_Q\abs{\B_n u_0}dx,
 \ee
where at the last inequality we used the fact that $\vp_{\delta,n}$ satisfies \eqref{BVB_norm}. Next, since $u\in C^\infty(\bar Q)$ and $\B_n\to\B$ in $\ell^\infty$, we have
\be
\abs{\B_nu(x)}\leq \sup\flp{\abs{\B_n}_{\ell^\infty}:\,\,n\in\N}\cdot\sum_{h\leq d}\abs{H^h u_0(x)}_{\ell^\infty},
\ee
which implies that
\be
\int_Q\sum_{h\leq d}\abs{H^l u(x)}_{\ell^\infty}dx\leq \norm{u}_{W^{d,+\infty}(Q)}<+\infty.
\ee
Thus, we could apply the Lebesgure dominate convergence theorem to conclude that 
\be
\limsup_{n\to\infty}\int_Q\abs{\B_nu_0}dx\leq \int_Q\limsup_{n\to\infty}\abs{\B_nu_0}dx = \int_Q\abs{\B_0u_0}dx.
\ee
This, and together with \eqref{limsup_Bn1} and \eqref{limsup_Bn2}, we observe that 
\be
\limsup_{n\to\infty} PV_{\B_n}(u_0)\leq \limsup_{n\to\infty}\int_Qu_0\B_n^\ast \vp_{\delta,n} dx+\delta\leq \int_Q\abs{\B_0u_0}dx+\delta=PV_{\B_0}(u_0)+\delta,
\ee
which implies, by sending $\delta\searrow 0$ second, that
\be\label{smooth_limsup}
\limsup_{n\to\infty} PV_{\B_n}(u_0)\leq PV_{\B_0}(u_0).
\ee
Next, by Theorem \ref{smooth_strict_approximation}, we could construct an approximation sequence $\seqe{u_\e}\subset C^\infty(\bar Q)$ such that $u_\e\to u$ in $L^p(Q)$ and
\be
PV_{\B_0}(u_\e)\to PV_{\B_0}(u),\text{ or }PV_{\B_0}(u_\e)\leq PV_{\B_0}(u)+O(\e).
\ee
Also, by \eqref{smooth_limsup}, we have
\be
\limsup_{n\to\infty}PV_{\B_n}(u_\e)\leq PV_{\B_0}(u_\e)\leq PV_{\B_0}(u)+O(\e).
\ee 
Thus, by a diagonal argument, we can obtain a sequence $\seqe{\B_{n_\e}}$ such that 
\be
PV_{\B_{n_\e}}(u_\e)\leq PV_{\B_0}(u)+O(\e).
\ee
That is, we have 
\be
\limsup_{\e\to 0}PV_{\B_{n_\e}}(u_\e)\leq PV_{\B_0}(u),
\ee
which concludes our thesis.
\end{proof}

We close Section \ref{gamma_conv_sec} by proving Theorem \ref{thm_Gamma_conv}.
\begin{proof}[Proof of Theorem \ref{thm_Gamma_conv}]
Property \emph{(Lower semi-continuity)} hold in view of Proposition \ref{compact_semi_para}, and Property \emph{(Recovery sequence)} follows from Proposition \ref{new_equal}. 
\end{proof}

\subsection{The point-wise characterization of sub-differental of $PV_\B$}\label{pointwise_chara_sd}
We recall few notations and preliminary results and definitions first. 
\begin{define}[{\cite[Definition 4.1 \& 5.1]{ekeland1976convex}}]\label{conjugatefunction}
Let $F$ be a function of normed space $V$ into $\bar \R$ be given.
\begin{enumerate}[1.]
\item
We define the \emph{polar} function of $F$, denoted by $F^\ast$, by
\be
F^\ast(u^\ast)=\sup\flp{\fjp{v,u^\ast}_{V,V^\ast}-F(v):\,\,v\in V}.
\ee
\item
We define the \emph{bipolar} function, say $F^{\ast\ast}$, of $F$ by 
\be
F^{\ast\ast}=(F^\ast)^\ast.
\ee
\item
We say $F$ is \emph{sub-differentiable} at point $u\in V$ if $F(u)$ is finite and there exists $u^\ast\in V^\ast$ such that 
\be
\fjp{v-u,u^\ast}_{V,V^\ast}+F(u)\leq F(v)
\ee
for all $v\in V$. Then we call such $u^\ast\in V^\ast$ is called a \emph{sub-gradient} of $F$ at $u$, and the set of sub-gradients at $u$ is called the \emph{sub-differential} at $u$ and is denoted $\partial F(u)$.
\end{enumerate}
\end{define}

\begin{proposition}[{\cite[Proposition 4.1 \& 5.1]{ekeland1976convex}}]\label{prop_ekeland1976convex}
Let $F$ be a function of $V$ into $\bar \R$ and $F^\ast$ its polar. Then the following assertions hold.
\begin{enumerate}[1.]
\item
We have $u^\ast\in\partial F(u)$ if and only if
\be
F(u)+F^\ast(u^\ast)=\fjp{u,u^\ast}.
\ee
\item
The set $\partial F(u)$ (possible empty) is convex and closed.
\item
If in addition $F$ is convex, then $F^{\ast\ast}=F$. 
\end{enumerate}
\end{proposition}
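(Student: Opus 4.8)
The plan is to obtain all three assertions directly from the definition of the polar function, reserving a Hahn--Banach separation argument for the last and most substantial claim. I would begin by recording the \emph{Fenchel--Young inequality}, immediate from the definition of $F^\ast$ as a supremum: for every $u\in V$ and $u^\ast\in V^\ast$,
\be
F(u)+F^\ast(u^\ast)\geq \fjp{u,u^\ast}_{V,V^\ast}.
\ee
For Assertion 1, I would rewrite the defining subgradient inequality $\fjp{v-u,u^\ast}+F(u)\leq F(v)$, valid for all $v\in V$, in the equivalent form $\fjp{v,u^\ast}-F(v)\leq \fjp{u,u^\ast}-F(u)$; taking the supremum over $v$ yields $F^\ast(u^\ast)\leq \fjp{u,u^\ast}-F(u)$, the reverse of Fenchel--Young, and hence the claimed equality. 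The converse follows by reversing these manipulations: the equality forces $\fjp{v,u^\ast}-F(v)\leq F^\ast(u^\ast)=\fjp{u,u^\ast}-F(u)$ for every $v$, which is exactly the subgradient inequality.

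For Assertion 2, I would use the characterization just obtained, namely that $u^\ast\in\partial F(u)$ precisely when $\fjp{v-u,u^\ast}\leq F(v)-F(u)$ for all $v\in V$. Each such condition is affine in $u^\ast$ and therefore defines a closed half-space in $V^\ast$, so $\partial F(u)$ is an intersection of closed convex sets and is itself convex and closed.

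Assertion 3 is the bipolar (Fenchel--Moreau) theorem, and it carries the real difficulty. The inequality $F^{\ast\ast}\leq F$ is again immediate from Fenchel--Young, since $\fjp{u,u^\ast}-F^\ast(u^\ast)\leq F(u)$ for every $u^\ast$ gives $F^{\ast\ast}(u)\leq F(u)$ upon taking the supremum over $u^\ast$. For the reverse inequality I would argue by contradiction: assuming $F^{\ast\ast}(u_0)<F(u_0)$ for some $u_0$, the point $(u_0,F^{\ast\ast}(u_0))$ lies outside the epigraph of $F$, which is convex because $F$ is convex and closed because $F$ is lower semicontinuous. A strict Hahn--Banach separation then produces a continuous affine functional lying below $F$ yet exceeding $F^{\ast\ast}(u_0)$ at $u_0$; such a functional has the form $u\mapsto \fjp{u,u^\ast}-c$ with $c\geq F^\ast(u^\ast)$, contradicting the definition of $F^{\ast\ast}$ as the supremum over all such affine minorants. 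The hard part will be this separation step: one must handle the possibility of a vertical separating hyperplane (which is where properness of $F$ enters) and must use lower semicontinuity to guarantee the epigraph is closed. I would accordingly flag that Assertion 3 is exactly the Fenchel--Moreau theorem and tacitly requires $F$ to be convex, lower semicontinuous, and proper, in line with the conventions of \cite{ekeland1976convex}.
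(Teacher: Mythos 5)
Your argument is correct, but note that the paper offers no proof of this proposition at all: it is quoted verbatim from Ekeland--Temam \cite[Propositions 4.1 \& 5.1]{ekeland1976convex}, so there is nothing internal to compare against. What you have written is essentially the standard proof from that reference. Assertions 1 and 2 are complete as you state them: the Fenchel--Young inequality plus the rearrangement of the subgradient inequality gives the equivalence, and writing $\partial F(u)$ as an intersection of the half-spaces $\{u^\ast:\fjp{v-u,u^\ast}\leq F(v)-F(u)\}$ over $v\in V$ gives convexity and (weak-$\ast$, hence norm) closedness. For Assertion 3 your outline is the right one, and your flag is a genuine catch rather than pedantry: as literally stated, ``convex implies $F^{\ast\ast}=F$'' is false (a convex function that fails to be lower semicontinuous has $F^{\ast\ast}$ equal to its l.s.c.\ envelope, and an improper convex function can have $F^\ast\equiv+\infty$), so the separation argument does require convexity, lower semicontinuity, and properness, exactly as in the Fenchel--Moreau theorem. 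This imprecision is harmless where the paper applies the result --- in the proof of Theorem \ref{main_sub_diff_pvb} the relevant functions are $PV_\B$ and the indicator of $W^q_0[\B](Q)$, both of which are convex, l.s.c., proper, and nonnegative --- but your version of the hypothesis is the one that should be recorded. The only step you leave genuinely unexecuted is the handling of a vertical separating hyperplane in the epigraph argument; since you identify where properness enters and the standard two-stage separation resolves it, this is an acceptable level of detail for a cited textbook result.
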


\begin{define}
Let $p\in[1,+\infty)$, $v\in L^p(Q;\rk)$, and operator $\B$ be given. 
\begin{enumerate}[1.]
\item
we say that $\B^\ast v$ in $L^p(Q)$ if there exists $w\in L^p(Q)$ such that for all $\vp\in C_c^\infty(Q;\rk)$
\be
\int_Q\B\vp\cdot v\,dx=-\int_Q \vp w\,dx.
\ee
\item
we define the space
\be
W^p[\B](Q;\rk):=\flp{v\in L^p(Q;\rk):\,\,\B^\ast v\in L^p(Q)}
\ee
with the norm
\be
\norm{v}_{W^p(\B)}^p:=\norm{v}_{L^p(Q)}^p+\norm{\B^\ast v}_{L^p(Q)}^p.
\ee
\item
we define
\be
W_0^p[\B](Q;\rk):=\operatorname{cl}(C_c^\infty(Q;\rk))_{\norm{\cdot}_{W^p(\B)}},
\ee
i.e., the closure of function space $C_c^\infty(Q;\rn)$ with respect to $\norm{\cdot}_{W^p(\B)}$ norm.
\item
we define
\be
C_c^\infty[\B](Q):=\flp{\B^\ast \vp:\,\,\vp\in C_c^\infty(Q;\rk),\,\,\norm{\vp}_{L^\infty(Q)}\leq 1}.
\ee
and
\be
K^p[\B](Q):=\flp{\B^\ast v:\,\,v\in W^p_0[\B](Q;\rk),\,\,\norm{v}_{L^\infty(Q)}\leq 1}.
\ee

\end{enumerate}
\end{define}
The main result of Section \ref{pointwise_chara_sd} reads as follows.
\begin{theorem}\label{main_sub_diff_pvb}
Let $p>1$, $q=p/(p-1)$, and $u\in L^p(Q)$, $\tilde u\in L^{q}(Q)$. Then $\tilde u\in \partial PV_\B(u)$ if and only if
\begin{enumerate}[1.]
\item
$u\in BV_\B(Q)$;
\item
there exist $v\in W_0^{q}[\B](Q;\rk)$ such that $\norm{v}_{L^\infty(Q)}\leq 1$, $\tilde u=\B^\ast v$,  and 
\be
PV_\B(u)=\int_Q u\,\B^\ast v\,dx.
\ee
\end{enumerate}
\end{theorem}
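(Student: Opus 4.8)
The plan is to recognise $PV_\B$ as a support function and to read off the subdifferential from the Fenchel--Young characterisation in Proposition \ref{prop_ekeland1976convex}(1). Set $V=L^p(Q)$, so that $V^\ast=L^q(Q)$ with $1<q<+\infty$, and regard $F:=PV_\B$ as a functional $V\to[0,+\infty]$. By Definition \ref{def_BVB} one has $F(u)=\sup\flp{\fjp{u,a}_{V,V^\ast}:\,\,a\in A}$, the support function of the convex set $A:=C_c^\infty[\B](Q)\subset L^q(Q)$ (note $\B^\ast\vp\in C_c^\infty(Q)$ since $\B^\ast$ has constant coefficients); in particular $F$ is convex, positively $1$-homogeneous, norm-lower-semicontinuous, and $F(0)=0$. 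Its polar is therefore an indicator function: a direct computation gives $F^\ast=\chi_C$, where $C$ is the closed convex hull of $A$ in $L^q(Q)$, and since $A$ is already convex and $L^q(Q)$ is reflexive this coincides with the norm closure $\bar A$. Because $0\in A$, one also has $F^\ast\equiv 0$ on $C$.

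The heart of the matter is the identification
\be
\bar A = K^q[\B](Q):=\flp{\B^\ast v:\,\,v\in W_0^q[\B](Q;\rk),\,\,\norm{v}_{L^\infty(Q)}\leq 1}.
\ee
For $\bar A\subseteq K^q[\B](Q)$ I would take $\B^\ast\vp_j\to g$ in $L^q$ with $\vp_j\in C_c^\infty(Q;\rk)$ and $\abs{\vp_j}\le 1$; since $Q$ is bounded the sequence is bounded in $W^q[\B](Q;\rk)$, so a subsequence converges weakly there to some $v$ with $\norm{v}_{L^\infty}\le 1$ and $\B^\ast v=g$ (pass to the limit in $\fjp{\B^\ast\vp_j,w}=\fjp{\vp_j,\B w}$ against $w\in C_c^\infty(Q)$), and weak closedness of the norm-closed subspace $W_0^q[\B]$ yields $v\in W_0^q[\B]$, i.e. $g\in K^q[\B](Q)$. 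The reverse inclusion $K^q[\B](Q)\subseteq\bar A$ is the delicate point, and I expect it to be the main obstacle: given $v\in W_0^q[\B]$ with $\norm{v}_{L^\infty}\le 1$ I must approximate $\B^\ast v$ in $L^q$ by $\B^\ast\vp$ with $\vp\in C_c^\infty(Q;\rk)$ and $\abs{\vp}\le 1$. A naive truncation is hopeless for a higher-order operator, since the chain rule spoils control of $\B^\ast$. The key observation is that $v$ is \emph{already} bounded by $1$, so I mollify instead of truncating: convolution with a probability kernel preserves the pointwise bound $\abs{\cdot}\le 1$, and as $\B^\ast$ has constant coefficients it commutes with convolution, whence $\B^\ast(v\ast\eta_\e)=(\B^\ast v)\ast\eta_\e\to\B^\ast v$ in $L^q$. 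A preliminary inward dilation toward the centre of $Q$, exactly as in \eqref{easy_checting_u} of Theorem \ref{smooth_strict_approximation} and permitted by the simple cube geometry, first makes $v$ compactly supported while keeping $\norm{v}_{L^\infty}\le 1$; a diagonal argument then delivers the required $\vp\in C_c^\infty(Q;\rk)$ with $\abs{\vp}\le 1$.

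With $\bar A=K^q[\B](Q)$ established, the theorem follows from Proposition \ref{prop_ekeland1976convex}(1), which states that $\tilde u\in\partial PV_\B(u)$ if and only if $PV_\B(u)+F^\ast(\tilde u)=\fjp{u,\tilde u}$. Finiteness of the left-hand side forces $PV_\B(u)<+\infty$, that is Assertion 1 ($u\in BV_\B(Q)$), together with $\tilde u\in\operatorname{dom}F^\ast=K^q[\B](Q)$; unwinding the latter produces $v\in W_0^q[\B]$ with $\norm{v}_{L^\infty}\le 1$ and $\tilde u=\B^\ast v$, while the Fenchel--Young equality itself reads $PV_\B(u)=\int_Q u\,\B^\ast v\,dx$, which is Assertion 2. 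Conversely, Assertions 1 and 2 give $F^\ast(\tilde u)=0$ (as $\tilde u\in K^q[\B](Q)=\operatorname{dom}F^\ast$) and $PV_\B(u)=\fjp{u,\tilde u}$, so the equality of Proposition \ref{prop_ekeland1976convex}(1) holds and $\tilde u\in\partial PV_\B(u)$. Apart from the truncation-free approximation in $K^q[\B](Q)\subseteq\bar A$, every step is a routine transcription of standard convex-duality facts into the present dual pair $(L^p,L^q)$.
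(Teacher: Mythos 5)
Your proposal is correct and its convex--duality skeleton is exactly the paper's: identify $PV_\B^\ast$ as the indicator of the $L^q$-closure of $C_c^\infty[\B](Q)$, prove that this closure equals $K^q[\B](Q)$, and conclude via the Fenchel--Young equality of Proposition \ref{prop_ekeland1976convex}(1). Your first inclusion (closure $\subseteq K^q[\B](Q)$ via boundedness in $W^q[\B]$, weak compactness in the reflexive $L^q$, and weak closedness of the norm-closed subspace $W_0^q[\B]$) is the same as the paper's. Where you genuinely diverge is the reverse inclusion $K^q[\B](Q)\subseteq\overline{C_c^\infty[\B](Q)}$. The paper takes smooth approximants $v_n\to v$ in $W^q[\B]$, truncates them as $\bar v_n:=-1\vee v_n\wedge 1$ to restore the bound $\abs{\bar v_n}\le 1$, recovers strong $L^q$ convergence of $\B^\ast\bar v_n$ from lower semicontinuity plus the Radon--Riesz property, and then re-mollifies with a partition of unity. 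You instead keep the bound for free by never truncating: mollification by a probability kernel preserves $\norm{\cdot}_{L^\infty}\le 1$ and commutes with the constant-coefficient $\B^\ast$, after an inward dilation compactifies the support. Your route is more robust: as you correctly observe, truncation composed with an operator of order $d\ge 2$ gives no control of $\B^\ast\bar v_n$ (the chain rule produces derivatives of a merely Lipschitz function), so the paper's inequality $\norm{\B^\ast\bar v_n}_{L^q}\le\norm{\B^\ast v_0}_{L^q}$ is delicate at best in the higher-order case, whereas your argument sidesteps the issue entirely. The one step you should spell out is that the inward dilation of $v\in W_0^q[\B]$, extended by zero, remains in $W_0^q[\B]$ with no singular contribution to $\B^\ast$ on the boundary of the shrunken cube; the clean way is to dilate the $C_c^\infty$ approximants guaranteed by the definition of $W_0^q[\B]$ and pass to the limit, noting also that for the inhomogeneous operator $\B=\sum_h B^h H^h$ dilation only commutes with $\B^\ast$ up to factors $(1+\delta)^h\to 1$, which is harmless. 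With that point made explicit, your proof is complete and, in the higher-order setting, preferable to the paper's.
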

We prove Theorem \ref{main_sub_diff_pvb} in several propositions.
\begin{proposition}\label{approxi_sub_space}
Let $p\in(1,+\infty)$ be given. Then we have the closure of function space $C_\B(Q)$ under $L^q$ norm equals to the function space $W^q_0[\B](Q)$, i.e.,
\be
\operatorname{cl}(C_\B(Q))_{L^q}=W^q_0[\B](Q).
\ee
\end{proposition}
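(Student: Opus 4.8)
The plan is to prove the two set-inclusions separately, after recording a convexity observation. Throughout I write $S:=C_\B(Q)=\flp{\B^\ast\vp:\vp\in C_c^\infty(Q;\rk),\ \norm{\vp}_{L^\infty(Q)}\le1}$ and $K:=W_0^q[\B](Q)=\flp{\B^\ast v:v\in W_0^q[\B](Q;\rk),\ \norm{v}_{L^\infty(Q)}\le1}$; both are convex subsets of $L^q(Q)$ since $v\mapsto\B^\ast v$ is linear and the $L^\infty$-ball is convex, so by Mazur's lemma their strong and weak $L^q$-closures coincide and I may pass freely between the two topologies. Since $q\in(1,\infty)$, the graph space $W^q[\B](Q;\rk)$ embeds isometrically as a closed subspace of the reflexive space $L^q(Q;\rk)\times L^q(Q)$ via $v\mapsto(v,\B^\ast v)$, hence is itself reflexive, and $W_0^q[\B](Q;\rk)$ is a closed, therefore weakly closed, subspace. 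The goal is then $\operatorname{cl}(S)=K$.

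The inclusion $\operatorname{cl}(S)\subseteq K$ follows once I show $S\subseteq K$ and that $K$ is $L^q$-closed. The first is immediate from $C_c^\infty(Q;\rk)\subset W_0^q[\B](Q;\rk)$. For closedness, let $\B^\ast v_n\to g$ in $L^q$ with $v_n\in W_0^q[\B]$ and $\norm{v_n}_{L^\infty}\le1$. The uniform $L^\infty$-bound together with $\abs{Q}<\infty$ gives $\sup_n\norm{v_n}_{W^q(\B)}<\infty$, so reflexivity yields a subsequence converging weakly in $W^q[\B]$ to some $v$; testing against $\psi\in C_c^\infty(Q)$ identifies $\B^\ast v=g$, weak-$\ast$ lower semicontinuity of the $L^\infty$-norm gives $\norm{v}_{L^\infty}\le1$, and weak closedness of $W_0^q[\B]$ gives $v\in W_0^q[\B]$. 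Hence $g\in K$, and $\operatorname{cl}(S)\subseteq\operatorname{cl}(K)=K$.

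The reverse inclusion $K\subseteq\operatorname{cl}(S)$ is the crux. The naive attempt---approximate $v\in W_0^q[\B]$ by $\vp_j\in C_c^\infty$ in the $W^q(\B)$-norm---fails because nothing forces $\norm{\vp_j}_{L^\infty}\le1$, and truncating $\vp_j$ back into the unit ball destroys control of $\B^\ast\vp_j$. I therefore argue by duality: suppose $g_0=\B^\ast v_0\in K$ but $g_0\notin\operatorname{cl}(S)$. Since $\operatorname{cl}(S)$ is closed and convex, Hahn--Banach separation produces $u\in L^p(Q)=L^q(Q)^\ast$ with $\int_Q g_0u\,dx>\sup_{s\in S}\int_Q su\,dx$. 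By Definition \ref{def_BVB} the right-hand supremum equals $PV_\B(u)$, so its finiteness forces $u\in BV_\B(Q)\cap L^p(Q)$.

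It then remains to establish the estimate $\int_Q u\,\B^\ast v_0\,dx\le PV_\B(u)$, which contradicts the strict separation and closes the argument. Here I invoke Theorem \ref{smooth_strict_approximation} to select $u_k\in C^\infty(\bar Q)$ with $u_k\to u$ in $L^p(Q)$ and $PV_\B(u_k)\to PV_\B(u)$. For each $k$, approximating $v_0$ by $C_c^\infty$ fields in the $W^q(\B)$-norm legitimizes the integration by parts $\int_Q u_k\,\B^\ast v_0\,dx=\int_Q\B u_k\cdot v_0\,dx$, whence $\int_Q u_k\,\B^\ast v_0\,dx\le\norm{v_0}_{L^\infty}\int_Q\abs{\B u_k}\,dx=PV_\B(u_k)$; passing $k\to\infty$ (using $u_k\to u$ in $L^p$ and $\B^\ast v_0\in L^q$) yields the claim. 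The main obstacle is exactly this last step: the $L^\infty$-constraint cannot be transported through a direct smooth approximation, so it must be recovered indirectly via the duality pairing and the strict-approximation theorem, with the boundary-sensitive integration by parts (valid precisely because $v_0\in W_0^q[\B]$) being the delicate point.
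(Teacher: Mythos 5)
Your proposal is correct, but it reaches the hard inclusion by a genuinely different route than the paper. For $\operatorname{cl}(C_\B(Q))_{L^q}\subseteq W^q_0[\B](Q)$ the two arguments essentially coincide: both extract a weakly convergent subsequence from the uniform $L^\infty$-bound, identify the limit by testing against $C_c^\infty$ functions, and use that the relevant convex set is closed, hence weakly closed. For the reverse inclusion the paper argues constructively: it takes the graph-norm approximating sequence $v_n\in C_c^\infty(Q;\rk)$ of $v$, truncates it back into the unit ball via $\bar v_n:=-1\vee v_n\wedge 1$, upgrades $\B^\ast\bar v_n\wto\B^\ast v$ to strong $L^q$ convergence by combining lower semicontinuity with the asserted bound $\norm{\B^\ast\bar v_n}_{L^q}\leq\norm{\B^\ast v}_{L^q}$ and the Radon--Riesz property, and finally re-mollifies with a partition of unity to restore smoothness, compact support, and the pointwise bound $\abs{v_{\e,n}}\leq 1$. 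You instead argue by duality: Hahn--Banach separation reduces the inclusion to the single inequality $\int_Q u\,\B^\ast v_0\,dx\leq PV_\B(u)$ for $u\in BV_\B(Q)\cap L^p(Q)$, which you prove via Theorem \ref{smooth_strict_approximation} and integration by parts against compactly supported approximants of $v_0$ (note this inequality is exactly Remark \ref{rmk_test_subgradient}, which the paper derives \emph{from} this proposition, so your order of deduction is reversed but not circular). Your route buys a cleaner argument that entirely avoids the truncation step --- the inequality $\norm{\B^\ast\bar v_n}_{L^q}\leq\norm{\B^\ast v}_{L^q}$ is the least transparent point of the paper's proof, since componentwise truncation does not obviously control $\B^\ast$ for a general higher-order operator $\B$ --- at the price of being non-constructive and of leaning on the strict smooth approximation of $u$ rather than producing an explicit approximating sequence for $\B^\ast v_0$.
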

\begin{proof}
We claim
\be\label{subset_direction1}
\operatorname{cl}(C_\B(Q))_{L^q}\subset W^q_0[\B](Q)
\ee
first, and we do it by showing the space $W^q_0[\B](Q)$ is closed with respect to $L^q$ norm. Let $g\in \operatorname{cl}(W^q[\B](Q;\rk))$ be given, and extract a sequence $\seqn{v_n}\subset W^q_0[\B](Q;\rk)$ such that 
\be\label{closeness_asumption}
\norm{\B^\ast v_n-g}_{L^q(Q)}\to 0.
\ee
Since  $\seqn{v_n}\subset W^q[\B](Q;\rk)$, we have $\norm{v_n}_{L^\infty}\leq 1$ and hence, up to a subsequence, there exists $v_0\in L^\infty$ such that 
\be
v_n\wto v_0\text{ weakly in }L^q\text{ and }\norm{v_0}_{L^\infty}\leq 1.
\ee 
Next, let $\phi\in C_c^\infty (Q)$ be given, and we observe that 
\be
\int_Q \B^\ast v_n\phi\,dx=-\int_Qv_n\B\phi\,dx\to -\int_Q v_0\B\phi\,dx,
\ee
and together with \eqref{closeness_asumption}, we have
\be
\int_Q g\phi\,dx=-\int_Qv_0\B\phi\,dx,
\ee
which implies that $g=\B^\ast v_0$. Thus, we have $v_0\in W_0^q[\B](Q;\rk)$. Next, since the set
\be
\flp{(v,\B^\ast v):\,\,v\in W^q_0[\B](Q;\rk)}\subset L^q(Q;\rk\times \R)
\ee
is convex and closed, hence by \cite[Theorem 3.7]{brezis2010functional}, it is weakly closed. Thus, we conclude that $v_0\in W^q_0[\B](Q)$, which implies that $g\in W^q_0[\B](Q)$, or the function space $W^q_0[\B](Q)$ is closed with respect to $L^q$ norm, which also conclude \eqref{subset_direction1} as desired.\\\\
We next claim that 
\be\label{subset_direction2}
\operatorname{cl}(C_\B(Q))_{L^q}\supset W^q_0[\B](Q).
\ee
We prove \eqref{subset_direction2} by following arguments used in \cite[Theorem 2, Page 125]{evans2015measure}. Let $g\in W^q_0[\B](Q)$ be given. That is, there exists $v\in W^q_0[\B](Q;\rk)$, $\norm{v}_{L^\infty}\leq 1$, and $g=\B^\ast v$. From the definition of $W^q[\B](Q)$, there exists $\seqn{v_n}\subset C_c^\infty(Q;\rk)$ such that
\be\label{first_conv_eq}
\norm{v_n-v}_{W^q[\B](Q)}\to 0.
\ee
Next, define the truncation function
\be\label{chopper_off}
\bar v_n:=-1\vee v_n\wedge 1,
\ee
and we note that
\be\label{ae_weakly_to}
\bar v_n\to v\text{ a.e., and }\B^\ast \bar v_n\wto \B^\ast v_0\text{ weakly in }L^q.
\ee
Using a similar argument used in Proposition \ref{lower_semicontity}, and together with $\bar v_n\to v$ a.e., we obatin that 
\be
\liminfn \norm{\B^\ast \bar v_n}_{L^q(Q)}\geq \norm{\B^\ast v_0}_{L^q(Q)}.
\ee
On the order hand, by \eqref{chopper_off}, we have 
\be
\norm{\B^\ast \bar v_n}_{L^q(Q)}\leq \norm{\B^\ast v_0}_{L^q(Q)},
\ee
and hence 
\be
\limn\norm{\B^\ast \bar v_n}_{L^q(Q)}= \norm{\B^\ast v_0}_{L^q(Q)}.
\ee
This, and together with the second part in \eqref{ae_weakly_to}, and using \cite[Exercise 4.19, 1, page 124]{brezis2010functional}, we conclude that 
\be
\limn \norm{\B^\ast \bar v_n-\B^\ast v_0}_{L^q(Q)}=0.
\ee
We next modify the sequence $\seqn{\bar v_n}$ so that $\seqe{\B^\ast\bar v_n}\subset C_c^\infty[\B](Q)$.  We obtain sequence of sets $V_k$, $k\in\N$, and partition of unity $\zeta_k\in C_c^\infty(Q)$ from the argument used in Theorem \ref{smooth_strict_approximation}. Next, for each $k$, we choose $\e_k$ small enough such that 
\begin{align}
& \operatorname{spt}(\eta_{\e_k}\ast(\bar v_n\,\zeta_{k}))\subset V_k\label{smooth_function}\\
& \norm{\eta_{\e_k}\ast(\bar v_n\,\zeta_k)-\bar v_n\,\zeta_k}_{L^q(Q;\rn)}<\e/2^{k+1}\label{l1_approach}\\
& \norm{\eta_{\e_k}\ast(\B^\ast(\bar v_n\zeta_k))-\B^\ast(\bar v_n\zeta_k)}_{L^q(Q)}<\e/2^{k+1}\label{TV_approach},
\end{align}
and in addition, we choose $\e_k$ small that 
\be\label{small_spt_room}
\e_k\leq \operatorname{dist}(\partial Q,\operatorname{spt}(\bar v_n))/8.
\ee
Then, we define
\be
v_{\e,n}:=\sum_{k=1}^\infty \eta_{\e_k}\ast(\bar v_n\zeta_k)
\ee
and \eqref{small_spt_room} indicates that $v_{\e,n}\in C_c^\infty(Q)$. Following the same calculation in  \cite[Theorem 2, Page 125]{evans2015measure}, we have that 
\be
\lime\norm{v_{\e,n}-\bar v_n}_{W^q[\B](Q)}= 0,
\ee
together with \eqref{first_conv_eq}, and a diagonal argument, we could construct a sequence $\seqn{v_{\e_n}}$ such that 
\be
\lime\norm{v_{\e_n}- v}_{W^q[\B](Q)}= 0,
\ee
Moreover, we observe that 
\be
\abs{v_{\e_n}}\leq \abs{\sum_{k=1}^\infty \eta_{\e_k}\ast(\bar v_n\zeta_k)}\leq \abs{\bar v_n}\leq 1,
\ee
which proves that $\seqn{\B^\ast v_{\e_n}}\subset C_c^\infty[\B](Q)$, and hence \eqref{subset_direction2}, and our thesis.
\end{proof}
Now we ready to prove Theorem \ref{main_sub_diff_pvb}.
\begin{proof}[Proof of Theorem \ref{main_sub_diff_pvb}]
We first claim that the convex conjugate of $PV_\B$, say $PV_\B^\ast$, has the form that 
\be
PV_\B^\ast(v)=I_{W^q_0[\B](Q)}(v)=:
\begin{cases}
0&\text{ if }v\in W^q_0[\B](Q)\\
+\infty &\text{ if }v\notin W^q_0[\B](Q).
\end{cases}
\ee
By Definition \ref{conjugatefunction} and Proposition \ref{approxi_sub_space}, we have that 
\be
I^\ast_{W^q_0[\B](Q)}(u)=PV_\B(u).
\ee
Next, since the seminorm $PV_\B$ and indictor function $I_{\operatorname{cl}(C_\B(Q))_{L^q}}$ are convex and lower semi-continuity, we have 
\be
PV_\B^\ast(v)=(I_{W^q_0[\B](Q)}^{\ast})^\ast = I_{W^q_0[\B](Q)}.
\ee
Finally, in view of Proposition \ref{prop_ekeland1976convex}, we have that
\be
u^\ast\in\partial PV_\B(u)
\ee
if and only if
\be
PV_\B(u)+PV_\B^\ast(u^\ast) = \fjp{u,u^\ast},
\ee
and we are done.
\end{proof}
\begin{remark}\label{rmk_test_subgradient}
We have actually showed, in view of Proposition \ref{approxi_sub_space}, that for any $v\in W_0^p[\B](Q)$, $\norm{v}_{L^\infty}\leq 1$, the inequality 
\be
\int_Q u\,\B^\ast v\,dx\leq PV_\B(u)
\ee
holds.
\end{remark}

\begin{theorem}[The point-wise characterization of $\partial PV_\B$]\label{thm_point_chara}
Let $u\in L^p(Q)\cap BV_\B(Q)$, $p>1$, be given. Let $v\in W^p_0[\B](Q)$ such that ${\B^\ast} v\in \partial PV_\B(u)$. Then we have 
\be
v = \sigma_u \text{ a.e. }x\in Q,
\ee
where $\sigma_u$ is the density of $\B u$ with respect to $\abs{\B u}$ (see Remark \ref{thm_structure_represent}).
\end{theorem}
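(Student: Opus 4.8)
The plan is to read off the saturation condition for the subgradient from Theorem \ref{main_sub_diff_pvb}, rewrite the pairing $\int_Q u\,\B^\ast v\,dx$ through the measure representation of $\B u$ from Proposition \ref{lower_semicontity} and Remark \ref{thm_structure_represent}, and then extract the pointwise identity via the equality case of the Cauchy--Schwarz inequality.

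First I would collect the two ingredients. Since $\B^\ast v\in\partial PV_\B(u)$ with $u\in L^p(Q)\cap BV_\B(Q)$, Theorem \ref{main_sub_diff_pvb} (applied with $q=p/(p-1)$) supplies $\norm{v}_{L^\infty(Q)}\le 1$ together with the saturation identity $PV_\B(u)=\int_Q u\,\B^\ast v\,dx$. Independently, Proposition \ref{lower_semicontity} and Remark \ref{thm_structure_represent} furnish the finite \emph{Radon} measure $\abs{\B u}$ and its polar density $\sigma_u$, with $\abs{\sigma_u}=1$ holding $\abs{\B u}$-a.e. and $\int_Q u\,\B^\ast\vp\,dx=-\int_Q\vp\cdot\sigma_u\,d\abs{\B u}$ for every $\vp\in C_c^\infty(Q;\rk)$; testing against a maximizing sequence (i.e. letting $\vp$ approach $\pm\sigma_u$ by mollification) also records $PV_\B(u)=\abs{\B u}(Q)=\int_Q\abs{\sigma_u}^2\,d\abs{\B u}$.

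The decisive step is to promote the representation formula so that $v$ is itself an admissible test field, namely to prove
\be
\int_Q u\,\B^\ast v\,dx=-\int_Q v\cdot\sigma_u\,d\abs{\B u}.
\ee
Because $v\in W_0^q[\B](Q;\rk)$, by definition there is a sequence $\seqn{v_n}\subset C_c^\infty(Q;\rk)$ with $v_n\to v$ and $\B^\ast v_n\to\B^\ast v$ in $L^q(Q)$; after the truncation $-1\vee v_n\wedge 1$ used in Proposition \ref{approxi_sub_space} I may also assume $\norm{v_n}_{L^\infty}\le 1$ and $v_n\to v$ Lebesgue-a.e. For each smooth $v_n$ the representation of Remark \ref{thm_structure_represent} applies, and its left-hand side converges to $\int_Q u\,\B^\ast v\,dx$ by H\"older's inequality, since $u\in L^p$ and $\B^\ast v_n\to\B^\ast v$ in $L^q$. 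The genuinely delicate point, which I expect to be the main obstacle, is the convergence of $\int_Q v_n\cdot\sigma_u\,d\abs{\B u}$: the $v_n$ converge only in $L^q(Q,dx)$, whereas $\abs{\B u}$ may concentrate on Lebesgue-null sets, so $L^q$-convergence (and Lebesgue-a.e. convergence) does not by itself control the integral against $\abs{\B u}$. I would resolve this by retaining the uniform bound $\abs{v_n\cdot\sigma_u}\le 1$ and arranging $v_n\to v$ also $\abs{\B u}$-a.e.\ --- choosing the mollification radii so that the smoothings converge to the precise representative of $v$ at $\abs{\B u}$-a.e.\ point --- after which dominated convergence against the finite measure $\abs{\B u}$ closes the identity. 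This is precisely the Anzellotti-type pairing difficulty and is the heart of the proof.

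Granting the extended identity, the conclusion is immediate. Substituting it into the saturation identity and comparing with $PV_\B(u)=\int_Q\abs{\sigma_u}^2\,d\abs{\B u}$ forces the pairing $\int_Q v\cdot\sigma_u\,d\abs{\B u}$ to attain the extreme value $\pm\abs{\B u}(Q)$. Since $\abs{v}\le 1=\abs{\sigma_u}$ pointwise, this saturates the Cauchy--Schwarz bound $\abs{v\cdot\sigma_u}\le\abs{v}\,\abs{\sigma_u}\le 1$ at $\abs{\B u}$-a.e.\ point, and the equality case of Cauchy--Schwarz then pins down $v=\sigma_u$ $\abs{\B u}$-a.e.\ (the sign being dictated by the normalization of $\sigma_u$ in Remark \ref{thm_structure_represent}), which is the asserted pointwise characterization.
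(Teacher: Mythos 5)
Your proposal is correct and follows essentially the same route as the paper: approximate $v$ by smooth, compactly supported fields $v_n$ with $\norm{v_n}_{L^\infty}\le 1$, apply the measure representation of Remark \ref{thm_structure_represent} to each $v_n$, pass to the limit using the saturation identity $PV_\B(u)=\int_Q u\,\B^\ast v\,dx$ from Theorem \ref{main_sub_diff_pvb}, and conclude from the saturation of the bound $\abs{v_n\cdot\sigma_u}\le 1$. The only organizational difference is that the paper extracts the conclusion quantitatively, via the elementary inequality $1-\sigma_u\cdot v_n\ge\tfrac12\abs{\sigma_u-v_n}^2$ and Cauchy--Schwarz, obtaining $v_n\to\sigma_u$ in $L^2(Q,\abs{\B u})$ directly rather than first extending the pairing identity to $v$ itself; the identification of the $\abs{\B u}$-a.e.\ limit of $v_n$ with $v$ --- the Anzellotti-type point you rightly flag as delicate --- is left implicit in the paper's proof as well.
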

\begin{proof}
Let $u\in L^p(Q)\cap BV_\B(Q)$ be given and $v\in W^p_0[\B](Q)$ be obtained from Theorem \ref{main_sub_diff_pvb}. Then, by the definition of $W^p_0[\B](Q)$, we could obtain a sequence $\seqn{v_n}\subset C_c^\infty(\B^\ast,Q)$ such that ${\B^\ast}v_n\to {\B^\ast} v$ strongly in $L^p$.\\\\
We claim that 
\be\label{pointwise_chara}
\norm{\sigma_u-v_n}_{L^p(Q,\abs{\B u})}\to 0.
\ee
From the definition of $PV_\B$ and Theorem \ref{thm_structure_represent}, we have that 
\be\label{approxi_tvr}
\int_Q u\,{\B^\ast}v_n\,dx = \int_Qv_n \cdot\sigma_u d\abs{\B u}.
\ee
On the order hand, since $\seqn{v_n}\subset C_c^\infty(\B^\ast,Q)$, we have $\norm{v_n}_{L^\infty}\leq 1$ and hence, together with the fact that $\abs{\sigma_u}=1$ $\abs{\B u}$ a.e., we observe that 
\begin{align}
1-(\sigma_u\cdot v_n)&=\frac12\abs{\sigma_u}^2-(\sigma_u\cdot v_n)+\frac12\abs{v_n}^2+\frac12\abs{\sigma_u}^2-\frac12\abs{v_n}^2\\
&=\frac12\abs{\sigma_u-v_n}^2+\frac12\abs{\sigma_u}^2-\frac12\abs{v_n}^2\geq \frac12\abs{\sigma_u-v_n}^2\geq 0.
\end{align}
Therefore, we could compute that 
\begin{align}\label{approxi_upper}
&\int_Q \abs{v_n-\sigma_u}d\abs{\B u}=\int_Q 1\cdot\abs{v_n-\sigma_u}d\abs{\B u}\\
&\leq \fsp{\int_Q1d\abs{\B u}}^{1/2}\cdot \fsp{\int_Q \abs{v_n-\sigma_u}^2d\abs{\B u}}^{1/2}\\
&\leq \fmp{PV_\B(u)}^{1/2}\cdot \fsp{\int_Q 1-(\sigma_u\cdot v_n)d\abs{\B u} }^{1/2}.
\end{align}
Next, from \eqref{approxi_tvr}, we have that 
\be
\limn\int_Qv_n \cdot\sigma_u d\abs{\B u} = \int_Qu\,{\B^\ast}v\,dx = PV_\B(u) = \int_Q 1 d\abs{{\B} u}.
\ee
This, and together with \eqref{approxi_upper}, we conclude \eqref{pointwise_chara} as desired.
\end{proof}

\begin{proposition}\label{nested_sub_gradient}
Let $u\in BV_\B(Q)$ and $V\subset\subset Q$ be given. Let $u^\ast\in \partial PV_\B(u)$ and $ u_V^\ast\in \partial PV_\B(u)\lfloor_V$, then we have
\be
u^\ast(x) =  u_V^\ast(x)\text{ for }\abs{\B u}\text{-a.e. }x\in V.
\ee
\end{proposition}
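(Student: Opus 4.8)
The plan is to deduce the statement from the point-wise characterization of the subgradient in Theorem~\ref{thm_point_chara}, applied on both the full cube $Q$ and the subdomain $V$, and then to invoke the locality of the Radon--Nikodym density $\sigma_u$ of $\B u$ with respect to $\abs{\B u}$. The guiding principle is that a subgradient of the seminorm is pinned down, $\abs{\B u}$-a.e., by its potential, and that this potential must coincide with $\sigma_u$ regardless of whether the seminorm is computed over $Q$ or over $V$.

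First I would replace each subgradient by its potential via Theorem~\ref{main_sub_diff_pvb}. Since test fields supported in $V$ extend by zero to $Q$, one has $PV_\B(u;V)\le PV_\B(u)<+\infty$, so the restriction of $u$ lies in $BV_\B(V)$ and the localized seminorm $PV_\B(\cdot\,;V)$, obtained by restricting the admissible $\vp$ in Definition~\ref{def_BVB} to $C_c^\infty(V;\rk)$, is well defined. Applying the representation on $Q$ and on $V$ produces potentials $v\in W_0^q[\B](Q;\rk)$ and $v_V\in W_0^q[\B](V;\rk)$, each of $L^\infty$-norm at most $1$, with $u^\ast=\B^\ast v$ realizing $PV_\B(u)=\int_Q u\,\B^\ast v\,dx$ and $u_V^\ast=\B^\ast v_V$ realizing the analogous identity for $PV_\B(\cdot\,;V)$. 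It is through these potentials, the representatives for which an $\abs{\B u}$-a.e.\ value is meaningful by Theorem~\ref{thm_point_chara}, that the asserted equality $u^\ast=u_V^\ast$ is to be read.

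Next I would apply Theorem~\ref{thm_point_chara} twice. On $Q$ it yields $v=\sigma_u$ for $\abs{\B u}$-a.e.\ $x\in Q$, hence in particular $\abs{\B u}$-a.e.\ on $V$; on $V$ it yields $v_V=\sigma_u^V$ for $\abs{\B u}\lfloor_V$-a.e.\ $x\in V$, where $\sigma_u^V$ is the density of $\B u\lfloor_V$ with respect to $\abs{\B u\lfloor_V}$. The crux is to match the two densities on $V$. Because $V$ is open and $\abs{\B u\lfloor_V}=\abs{\B u}\lfloor_V$, restriction commutes with differentiation of measures: by the Besicovitch derivation theorem one has, for $\abs{\B u}$-a.e.\ $x\in V$, the equality $\sigma_u^V(x)=\lim_{r\to0}\B u(B_r(x))/\abs{\B u}(B_r(x))=\sigma_u(x)$, since $B_r(x)\subset V$ for all small $r$ renders the local averages insensitive to the behaviour of $\B u$ outside $V$. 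Chaining the three identities gives $v=\sigma_u=\sigma_u^V=v_V$ for $\abs{\B u}$-a.e.\ $x\in V$, which is the claim.

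I expect the main obstacle to be the bookkeeping of the last two steps rather than any single hard estimate. One must verify that the machinery of Section~\ref{pointwise_chara_sd} genuinely localizes to $V$, i.e.\ that the potential representation and the point-wise identification $v_V=\sigma_u^V$ hold on the subdomain; this rests on the smooth approximation of Theorem~\ref{smooth_strict_approximation} and Proposition~\ref{approxi_sub_space}, whose construction uses the simple geometry of the underlying cube, so it is cleanest to take $V$ to be an open cube (or to reduce to that case) rather than an arbitrary open set. The second delicate point is the justification that the Radon--Nikodym density is unchanged under restriction to $V$; once both potentials are identified with the single density $\sigma_u$ on $V$, the equality $u^\ast=u_V^\ast$ there is immediate.
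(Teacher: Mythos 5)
Your proposal is correct and follows essentially the same route as the paper: obtain the potentials $v$ and $v_V$ from Theorem~\ref{main_sub_diff_pvb} for $PV_\B(u)$ and its restriction to $V$, then identify both with the density $\sigma_u$ via Theorem~\ref{thm_point_chara}. Your extra step justifying $\sigma_u^V=\sigma_u$ on $V$ by the Besicovitch derivation theorem fills in a detail the paper's (very terse) proof leaves implicit, but it is the same argument.
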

\begin{proof}
We obtain $v$ and $v_V\in W_0^p[\B](Q;\rk)$ such that Assertions 1 and 2 hold for $PV_\B(u)$ and $PV_\B(u)\lfloor_V$, respectively. Then, by Theorem \ref{thm_point_chara} we have both $v(x)$ and $v_V(x)$ can be represented point-wisely by the density of $\B u$ with respect to $\abs{\B u}$, and we are done.
\end{proof}

\section{Learning the optimal operator $\B$ in imaging processing problems}\label{sec_ts_PGV}
In this section we use the bilevel training scheme introduced in Section \ref{sec_introduction} to determine the optimal setting of $PV_\B$ for a given \emph{training pairs} $(u_c,u_\eta)$, where $u_\eta\in L^2(Q)$ and $u_c\in BV(Q)$ represents the corrupted and clean image, respectively.

\subsection{The bilevel training scheme with the $PV_\B$ regularizer}

We collect few notations first.
\begin{notation}\label{def_training_set}
Recall the definition of $\B$ from Notation \ref{operator_B}. 
\begin{enumerate}[1.]
\item
We denote by $\Sigma$ the collection of operators $\B$ such that 
\be
\Sigma:=\flp{\B:\,\,\abs{ \B}_{\ell^\infty}\leq 1}
\ee
\item
We denote the \emph{TrainingGround}  $\mathbb T$ by
\be
\mathbb T:=\operatorname{cl}(\R^+)\times \Sigma.
\ee
 \end{enumerate}
\end{notation}
We state below the definition of training scheme $\mathcal T$ and associated notations. 
\begin{define}\label{training_T}
We define the training scheme $\mathcal T$ with underlying training ground $\mathbb T$ by
\begin{flalign}
{\text{Level 1. }}&\,\,\,\,\,\,(\alpha_\T,\B_\T)\in\mathbb A[\T]:= \argmin\flp{\norm{u_c-u_{\alpha,\B}}_{L^2(Q)}^2:\,\,(\alpha,\B)\in\mathbb T},\tag{$\mathcal T$-L1}\label{ABtraining_0_1}&\\
{\text{Level 2. }}&\,\,\,\,\,\,u_{\alpha,\B}:=\argmin\flp{\norm{u-u_\eta}_{L^2}^2+\alpha PV_\B(u),\,\, u\in L^1(Q)}.\tag{$\mathcal T$-L2}\label{ABsolution_map}&
\end{flalign}
In particular, for the case that $\alpha=+\infty$, we define
\be\label{infinite_solution}
u_{+\infty}:=\argmin\flp{\norm{u-u_c}_{L^2(Q)}^2:\,\,u\in \mathcal N}\text{ where }\mathcal N:=\operatorname{conv}\fsp{\bigcup_{\B\in\Sigma}\mathcal N(\B)}.
\ee
In \eqref{ABtraining_0_1},  we denote by notation $\mathbb A[\T]$ the collection of optimal solution(s) of scheme $\FT$ with underlying training ground $\T$, and $(\alpha_\T,\B_\T)\in\mathbb A[\T]$ is an optimal solution obtained from training ground $\T$.

\end{define}

We first show that the Level 2 problem \eqref{ABsolution_map} admits a unique solution.
\begin{proposition}\label{unique_exist_lower}
Let $\alpha\in\R^+$ and $\B\in\Sigma$ be given. Then, there exists a unique $u_{\alpha,\B}\in BV_\B(Q)$ such that
\be
u_{\alpha,\B}=\argmin\flp{\norm{u-u_\eta}^2_{L^2(Q)}+\alpha PV_\B(u):\, u\in L^1(Q)}.
\ee
\end{proposition}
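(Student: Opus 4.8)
The plan is to prove existence and uniqueness by the direct method of the calculus of variations. Write $\mathcal J_{\alpha,\B}(u) := \norm{u-u_\eta}_{L^2(Q)}^2 + \alpha PV_\B(u)$ for the Level 2 functional, and first observe that $\mathcal J_{\alpha,\B}$ is finite on $BV_\B(Q)\cap L^2(Q)$ (for instance on constants, where $PV_\B$ vanishes), so the infimum $m:=\inf\{\mathcal J_{\alpha,\B}(u):u\in L^1(Q)\}$ is a finite nonnegative real. Take a minimizing sequence $\seqn{u_n}\subset L^1(Q)$ with $\mathcal J_{\alpha,\B}(u_n)\to m$; without loss of generality $\mathcal J_{\alpha,\B}(u_n)\leq m+1$ for all $n$, so both $\norm{u_n-u_\eta}_{L^2(Q)}^2$ and $\alpha PV_\B(u_n)$ are uniformly bounded. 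In particular $\seqn{u_n}$ is bounded in $L^2(Q)$ (hence in $L^1(Q)$ since $Q$ has finite measure) and $\sup_n PV_{\B}(u_n)<+\infty$.

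Next I would extract a weak limit. By boundedness in $L^2(Q)$, after passing to a subsequence (not relabeled) there exists $u_0\in L^2(Q)$ with $u_n\wto u_0$ weakly in $L^2(Q)$. The fidelity term is weakly lower semicontinuous on $L^2(Q)$ (the norm is convex and continuous, hence weakly l.s.c.), so $\norm{u_0-u_\eta}_{L^2(Q)}^2\leq\liminf_n\norm{u_n-u_\eta}_{L^2(Q)}^2$. For the regularizer I invoke Proposition \ref{compact_semi_para} (or, with the fixed operator $\B$, the l.s.c. part of Proposition \ref{lower_semicontity} applied through the weak-$L^2$ convergence): with $\B_n\equiv\B$ and $\alpha_n\equiv\alpha$, the uniform bound on $\norm{u_n}_{L^2(Q)}+PV_\B(u_n)$ yields $u_0\in BV_\B(Q)$ and $PV_\B(u_0)\leq\liminf_n PV_\B(u_n)$. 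Adding the two inequalities gives $\mathcal J_{\alpha,\B}(u_0)\leq\liminf_n\mathcal J_{\alpha,\B}(u_n)=m$, so $u_0$ is a minimizer.

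For uniqueness I would use strict convexity. The map $u\mapsto PV_\B(u)$ is convex, being a supremum of the linear functionals $u\mapsto\int_Q u\,\B^\ast\vp\,dx$ over admissible $\vp$ (see the definition \eqref{BVB_norm}), and the fidelity term $u\mapsto\norm{u-u_\eta}_{L^2(Q)}^2$ is strictly convex on $L^2(Q)$. Hence $\mathcal J_{\alpha,\B}$ is strictly convex on its effective domain: if $u_1\neq u_2$ are two minimizers then their midpoint satisfies $\mathcal J_{\alpha,\B}((u_1+u_2)/2)<\tfrac12\mathcal J_{\alpha,\B}(u_1)+\tfrac12\mathcal J_{\alpha,\B}(u_2)=m$, contradicting minimality; thus $u_1=u_2$.

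The only genuinely delicate point is the coercivity/compactness step: the bound on $\mathcal J_{\alpha,\B}$ only controls the $L^2$-distance to $u_\eta$ together with $PV_\B$, and I must make sure this furnishes enough compactness to pass to a limit in the weak-$L^2$ topology and to apply the lower semicontinuity of $PV_\B$. This is exactly why Proposition \ref{compact_semi_para} was proved with the exponent $p\in(1,+\infty]$ in \eqref{liminf_unf_upper}; taking $p=2$ there supplies both the weak-$L^2$ compactness of the minimizing sequence and the inequality \eqref{lsc_PV_Bn}, so the main obstacle is already handled by that proposition and the argument reduces to assembling these pieces.
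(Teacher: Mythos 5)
Your proof is correct and takes essentially the same route as the paper, whose entire proof is the one-line remark that the result follows from lower semicontinuity and convexity of $PV_\B$; you have simply filled in the standard direct-method details (finiteness of the infimum on constants, $L^2$-boundedness of a minimizing sequence, weak compactness, weak l.s.c. of both terms via Proposition \ref{compact_semi_para} with $p=2$, and strict convexity of the fidelity term for uniqueness). The only discrepancy worth noting is that the paper cites Proposition \ref{thm_asymptitic_PGV} (l.s.c. in the operator $\B$ for fixed $u$), which is not the relevant tool here; your appeal to Proposition \ref{compact_semi_para} (equivalently Proposition \ref{lower_semicontity}) is the correct one.
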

\begin{proof}
The proof can be obtained by Proposition \ref{thm_asymptitic_PGV} and the fact that $PV_\B$ is convex.
\end{proof}
\begin{theorem}\label{main_thm}
Let the training ground $\mathbb T$ be given. Then the training scheme $\mathcal T$ admits at least one solution $(\alpha_\T, \B_\T)\in\mathbb T$, and provides an associated optimally reconstructed image $u_{\alpha_\T, \B_\T}\in BV_{ \B_\T}(Q)$.
\end{theorem}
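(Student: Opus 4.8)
The plan is to apply the direct method of the calculus of variations to the assessment function $\mathcal A(\alpha,\B):=\norm{u_c-u_{\alpha,\B}}_{L^2(Q)}^2$ over the training ground $\mathbb T=\operatorname{cl}(\R^+)\times\Sigma$, using the $\Gamma$-convergence statement of Theorem \ref{thm_Gamma_conv} as the central continuity tool and the uniqueness of Proposition \ref{unique_exist_lower} to identify the limit. First I would fix a minimizing sequence $\seqn{(\alpha_n,\B_n)}\subset\mathbb T$ for $\mathcal A$. Since $\Sigma=\flp{\B:\abs{\B}_{\ell^\infty}\leq 1}$ is a closed and bounded subset of the finite-dimensional space of coefficient matrices $B^h$, it is compact for $\ell^\infty$-convergence, and $\operatorname{cl}(\R^+)=[0,+\infty]$ is compact as well; hence, up to a subsequence (not relabeled), $\B_n\to\B_0$ in $\ell^\infty$ and $\alpha_n\to\alpha_0\in[0,+\infty]$, with $(\alpha_0,\B_0)\in\mathbb T$.

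Next I would pass to the limit in the associated Level-2 minimizers $u_n:=u_{\alpha_n,\B_n}$. Testing \eqref{ABsolution_map} with the competitor $u\equiv 0$ yields the energy bound $\norm{u_n-u_\eta}_{L^2}^2+\alpha_n PV_{\B_n}(u_n)\leq\norm{u_\eta}_{L^2}^2$, which provides a uniform $L^2$ bound on $\seqn{u_n}$ and a uniform bound on $\seqn{\alpha_n PV_{\B_n}(u_n)}$. By weak compactness in $L^2(Q)$, a further subsequence gives $u_n\wto u_0$ weakly in $L^2$. Assume first $\alpha_0\in\R^+$. The fundamental consequence of $\Gamma$-convergence then applies: the liminf inequality of Theorem \ref{thm_Gamma_conv} gives $\mathcal I_{\alpha_0,\B_0}(u_0)\leq\liminfn\mathcal I_{\alpha_n,\B_n}(u_n)$, while for any competitor $w\in BV_{\B_0}(Q)$ the recovery sequence, combined with the minimality of each $u_n$, yields $\limsupn\mathcal I_{\alpha_n,\B_n}(u_n)\leq\mathcal I_{\alpha_0,\B_0}(w)$. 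Chaining these shows that $u_0$ minimizes $\mathcal I_{\alpha_0,\B_0}$, so by the uniqueness in Proposition \ref{unique_exist_lower} we conclude $u_0=u_{\alpha_0,\B_0}$.

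Finally, weak lower semicontinuity of the $L^2$-norm gives $\norm{u_c-u_0}_{L^2}^2\leq\liminfn\norm{u_c-u_n}_{L^2}^2$, so that $\mathcal A(\alpha_0,\B_0)\leq\liminfn\mathcal A(\alpha_n,\B_n)=\inf_{\mathbb T}\mathcal A$; hence $(\alpha_0,\B_0)$ is an optimal solution and $u_{\alpha_0,\B_0}\in BV_{\B_0}(Q)$ is the associated reconstructed image. The degenerate endpoint $\alpha_0=+\infty$ is not covered by Theorem \ref{thm_Gamma_conv} and I would handle it directly: the bound on $\alpha_n PV_{\B_n}(u_n)$ forces $PV_{\B_n}(u_n)\to 0$, whence by the lower semicontinuity of Proposition \ref{thm_asymptitic_PGV} the weak limit $u_0$ lies in the set $\mathcal N$ of \eqref{infinite_solution}, allowing one to identify $u_0$ with $u_{+\infty}$ and close the argument. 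The main obstacle I anticipate is precisely this transfer of convergence from the parameters $(\alpha_n,\B_n)$ to the solution map, namely establishing $u_{\alpha_n,\B_n}\wto u_{\alpha_0,\B_0}$, which is exactly where the $\Gamma$-convergence and the uniqueness of Level-2 minimizers must be combined, together with the extra care required at the non-coercive endpoint $\alpha_0=+\infty$.
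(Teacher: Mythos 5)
Your overall strategy coincides with the paper's: extract a convergent subsequence of a minimizing sequence in the compact set $\mathbb T$, use the $\Gamma$-convergence of Theorem \ref{thm_Gamma_conv} together with the uniqueness of Proposition \ref{unique_exist_lower} to pass to the limit in the Level-2 minimizers when $\alpha_\T$ is a positive real number, conclude by weak lower semicontinuity of the $L^2$-norm, and treat $\alpha_\T=+\infty$ by hand via the vanishing of $PV_{\B_n}(u_{\alpha_n,\B_n})$ and the definition \eqref{infinite_solution}. You are in fact more explicit than the paper on the step $u_{\alpha_n,\B_n}\wto u_{\alpha_\T,\B_\T}$, which the paper only asserts as a ``property of $\Gamma$-convergence''; your chaining of the liminf inequality, the recovery sequence, the minimality of each $u_{\alpha_n,\B_n}$, and uniqueness is the right justification.

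There is, however, one genuine gap: the endpoint $\alpha_\T=0$ is not covered by your argument, and it cannot be absorbed into the $\Gamma$-convergence case. Theorem \ref{thm_Gamma_conv} is stated for $\alpha_0\in\R^+$, and its liminf inequality actually fails at $\alpha_0=0$: along a sequence with $\alpha_n\to 0$ the energies $\mathcal I_{\alpha_n,\B_n}(u_n)$ can stay bounded while $PV_{\B_n}(u_n)\to+\infty$, so the weak limit $u_0$ need not lie in $BV_{\B_0}(Q)$, whereas $\mathcal I_{0,\B_0}$ is $+\infty$ there; symmetrically, your identification ``$u_0$ minimizes $\mathcal I_{0,\B_0}$'' would force $u_0=u_\eta\in BV_{\B_0}(Q)$, which is false for general $u_\eta\in L^2(Q)$. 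The paper's proof devotes a separate Case 2 to this: it mollifies $u_\eta$, tests the optimality of \eqref{ABsolution_map} against $u_\eta^\e$ to get $\norm{u_{\alpha_n,\B_n}-u_\eta}_{L^2(Q)}^2\leq\norm{u_\eta^\e-u_\eta}_{L^2(Q)}^2+\alpha_nN^d\norm{u_\eta^\e}_{W^{d,1}(\rn)}$, and lets $\alpha_n\to0$ then $\e\to0$ to obtain strong $L^2$-convergence to $u_\eta$, from which optimality of $(0,\B_\T)$ follows. You would need to add this (or an equivalent) argument. Two smaller points: in the $\alpha_\T=+\infty$ case the lower semicontinuity you need is the joint one in $(u,\B)$ from Proposition \ref{compact_semi_para} (inequality \eqref{lsc_PV_Bn}), not Proposition \ref{thm_asymptitic_PGV}, which fixes $u$; and the weak limit $u_0$ need not equal $u_{+\infty}$ --- one only gets $u_0\in\mathcal N$ and hence $\norm{u_0-u_c}_{L^2(Q)}\geq\norm{u_{+\infty}-u_c}_{L^2(Q)}$, which is what closes the argument.
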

\begin{proof}
Let $\seqn{\alpha_n,\B_n}\subset \mathbb T$ be a minimizing sequence obtained from \eqref{ABtraining_0_1}. Then, by the boundedness and closedness of $\Sigma$ in $\ell^{\infty}$, up to a subsequence (not relabeled), there exists $(\alpha_\T,\B_\T)\in \operatorname{cl}(\R^+)\times \Sigma$ such that $\alpha_n\to\alpha_\T$ in $\R$, $\B_n\to \B_\T$ in $\ell^{\infty}$, and 
\be\label{goes_to_hell_heaven}
\limn \,\norm{u_c-u_{\alpha_n,\B_n}}_{L^2(Q)}^2\to m:=\inf\flp{\norm{u_c-u_{\alpha,\B}}_{L^2(Q)}^2:\,\,(\alpha,\B)\in\mathbb T}.
\ee
We divide our arguments into three cases.\\\\
\underline{Case 1:} Assume $\alpha_\T>0$. Then, in view of Theorem \ref{thm_Gamma_conv} and the properties of $\Gamma$-convergence, we have 
\be\label{main_thm_eq}
u_{\alpha_n,\B_n}\wto u_{\alpha_\T,\B_\T}\text{ weakly}\text{ in }L^2(Q),
\ee
where $u_{\alpha_n,\B_n}$ and $u_{\alpha_\T, \B_\T}$ are obtained from \eqref{ABsolution_map}. Thus, we deduce that 
\be
\norm{u_{\alpha_\T, \B_\T}-u_c}_{L^2(Q)}\leq \liminfn \norm{u_{\alpha_n,\B_n}-u_c}_{L^2(Q)}=m,
\ee
which completes the thesis.\\\\
\underline{Case 2:} Assume $\alpha_\T=0$. Then by \eqref{goes_to_hell_heaven}, up to a subsequence, there exists $\bar u\in L^2(Q)$ such that $u_{\alpha_n,\B_n}\wto \bar u$ weakly in $L^2$. We claim that $u_{\alpha_n,\B_n}\to u_\eta$ in $L^2$ strong. Extend $u_\eta$ by zero outside $Q$ and we define 
\be
u_{\eta}^\e:=u_\eta\ast \eta_\e
\ee
where $\eta_\e$ is the standard mollifier. Then we have $u_\eta^\e\in C_c^\infty(\rn)$ and $u_{\eta}^\e\to u_\eta$ strongly in $L^2(\rn)$. By the optimality condition of \eqref{ABsolution_map}, we have
\begin{align}
&\norm{u_{\alpha_n,\B_n}-u_\eta}^2_{L^2(Q)}+\alpha_n PV_{\B_n}(u_{\alpha_n,\B_n})\\
&\leq \norm{u_{\eta}^\e-u_\eta}^2_{L^2(Q)}+\alpha_nPV_{\B_n}(u_\eta^\e)\leq \norm{u_{\eta}^\e-u_\eta}^2_{L^2(Q)}+\alpha_nN^d \norm{u_\eta^\e}_{W^{d,1}(\rn)}.
\end{align}
That is, we have
\be
\norm{u_{\alpha_n,\B_n}-u_\eta}^2_{L^2(Q)}\leq  \norm{u_{\eta}^\e-u_\eta}^2_{L^2(Q)}+\alpha_nN^d \norm{u_\eta^\e}_{W^{d,1}(\rn)},
\ee
and we are done by letting $\alpha_n\to 0$ first and $\e\to 0$ second.\\\\
\underline{Case 3:} Assume $\alpha_\T=+\infty$. Reasoning as in Case 2, we have again that there exists $\bar u\in L^2(Q)$ such that 
\be
u_{\alpha_n,\B_n}\wto \bar u\text{ and }PV_{\B_\T}(\bar u)=0.
\ee
Then, in this case we have, by \eqref{infinite_solution}, that
\be
m=\liminfn\norm{u_{\alpha_n,\B_n}-u_c}_{L^2(Q)}\geq \norm{\bar u-u_c}_{L^\infty(Q)}\geq \norm{u_{+\infty}-u_c}_{L^2(Q)},
\ee
as desired.
\end{proof}
\subsection{Numerical realization and finite approximation of scheme $\mathcal T$}\label{sec_numerical_finiteapprox}
For the numerical realization of training scheme $\mathcal T$, we in addition require that the training ground $\T$ satisfies the following assumption.
\begin{assumption}\label{kappa_training_ground}
Let the order $d\in\N$ be given.
\begin{enumerate}[1.]
\item
We assume the intensity parameter $\alpha$ satisfies the \emph{box-constraint} (see, e.g. \cite{bergounioux1998optimal,de2013image}). That is, there exists a constant $P\in\R^+$, chosen by user, such that $\alpha\in[0,P]$.
\item
We assume the collection $\Sigma$ of operator $\B$ satisfies the following two conditions.
\begin{enumerate}[a.]
\item
Each operator $\B\in\Sigma$ has at most order $d$ (the box-constraint on order of $\B$);
\item
For any $\B_1$, $\B_2\in \Sigma$, the continuity assumption 
\be\label{cont_sigma_PV1}
\abs{PV_{\B_1}(u)-PV_{\B_2}(u)}\leq O\fsp{\abs{\B_1-\B_2}_{\ell^\infty}}PV_{\B_1}(u)
\ee
and
\be\label{cont_sigma_PV2}
\abs{PV_{\B_1}(u)-PV_{\B_2}(u)}\leq O\fsp{\abs{\B_1-\B_2}_{\ell^\infty}}PV_{\B_2}(u)
\ee
holds.
\end{enumerate}
\end{enumerate}
\end{assumption}

The following corollary is a direct consequence of Theorem \ref{main_thm}
\begin{corollary}
The training scheme $\mathcal T$, with a underlying training ground $\T$ satisfies Assumption \eqref{kappa_training_ground}, admits at least one solution $(\alpha_\T,\B_\T)\in\mathbb T$, and provides an associated optimally reconstructed image $u_{\alpha_\T,\B_\T}\in BV_{\B_\T}(Q)$.
\end{corollary}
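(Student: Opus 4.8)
The plan is to deduce this statement directly from Theorem \ref{main_thm}, so the real content is only to check that restricting the training ground according to Assumption \ref{kappa_training_ground} preserves every hypothesis used in that theorem's proof. Under the box-constraint the ground $\mathbb T=\operatorname{cl}(\R^+)\times\Sigma$ of Notation \ref{def_training_set} is replaced by $[0,P]\times\Sigma$, where $\Sigma$ now consists only of operators of order at most $d$ satisfying \eqref{cont_sigma_PV1}--\eqref{cont_sigma_PV2}; this is a subset of the ground handled in Theorem \ref{main_thm}, and the existence argument there rested solely on compactness of the $\alpha$-factor together with boundedness and closedness of the operator-factor in $\ell^\infty$. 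Hence it suffices to re-run that argument on the smaller ground.

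First I would fix a minimizing sequence $\seqn{(\alpha_n,\B_n)}\subset[0,P]\times\Sigma$ for the Level-1 functional in \eqref{ABtraining_0_1}. Compactness of $[0,P]$ gives $\alpha_n\to\alpha_\T\in[0,P]$ along a subsequence, and boundedness and closedness of $\Sigma$ in $\ell^\infty$ give $\B_n\to\B_\T\in\Sigma$ in $\ell^\infty$. The key simplification afforded by the box-constraint is that $P<+\infty$ excludes the limit value $\alpha_\T=+\infty$ altogether, so only Case 1 ($\alpha_\T>0$) and Case 2 ($\alpha_\T=0$) of the proof of Theorem \ref{main_thm} can occur and the delicate projection onto $\mathcal N$ in \eqref{infinite_solution} is never needed. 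In Case 1 the $\Gamma$-convergence $\mathcal I_{\alpha_n,\B_n}\to\mathcal I_{\alpha_\T,\B_\T}$ of Theorem \ref{thm_Gamma_conv} yields $u_{\alpha_n,\B_n}\wto u_{\alpha_\T,\B_\T}$ in $L^2(Q)$, whence weak lower semicontinuity of $\norm{\cdot-u_c}_{L^2(Q)}$ shows that $(\alpha_\T,\B_\T)$ attains the infimum; in Case 2 the comparison-with-mollification estimate reproduced verbatim from Theorem \ref{main_thm} forces $u_{\alpha_n,\B_n}\to u_\eta$ strongly.

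I do not anticipate a genuine obstacle, as the continuity conditions \eqref{cont_sigma_PV1}--\eqref{cont_sigma_PV2} play no role in existence and are reserved for the finite-approximation analysis of Theorem \ref{PV_finiteapprox_result}. The only point meriting a line of justification is that the box-constrained $\Sigma$ is still closed in $\ell^\infty$: an $\ell^\infty$-limit of operators of order at most $d$ with $\abs{\B}_{\ell^\infty}\le1$ is again such an operator, since each $\B$ is encoded by the finitely many coefficient matrices $B^1,\dots,B^d$, and the two-sided bounds \eqref{cont_sigma_PV1}--\eqref{cont_sigma_PV2} pass to the limit using the lower semicontinuity of $PV$ in the operator (Proposition \ref{thm_asymptitic_PGV}). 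With closedness in hand the hypotheses of Theorem \ref{main_thm} are met on $[0,P]\times\Sigma$, and its conclusion --- existence of $(\alpha_\T,\B_\T)$ and of the reconstruction $u_{\alpha_\T,\B_\T}\in BV_{\B_\T}(Q)$ --- follows.
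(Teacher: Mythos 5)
Your proposal is correct and follows essentially the same route as the paper: the paper's own proof is a one-line remark that the argument is identical to Cases 1 and 2 of Theorem \ref{main_thm}, which is precisely your observation that the box-constraint $\alpha\in[0,P]$ rules out the case $\alpha_\T=+\infty$ and the remaining two cases go through unchanged. Your additional check that the constrained $\Sigma$ remains closed in $\ell^\infty$ is a sensible (if routine) supplement that the paper leaves implicit.
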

\begin{proof}
The argument is identical to the argument used in Theorem \ref{main_thm}, Case 1 \& Case 2.
\end{proof}
Recall the definition of the \emph{assessment operator} from \eqref{cost_map_intro} that
\be
\mathcal A(\alpha,\B):=\norm{u_c-u_{\alpha,\B}}_{L^2(Q)}^2,\text{ for }(\alpha,\B)\in\T.
\ee 
As discussed in Section \ref{sec_introduction}, the Level 1 problem \eqref{ABtraining_0_1} for scheme $\mathcal T$ is equivalent to find global minimizers of $\mathcal A(\alpha,\B)$ among the training ground $\mathbb T$. However, in view of the counter-example provided in \cite{carolapani2018bilevel}, the assessment function $\mathcal A(\cdot)$ is not convex, and hence the traditional methods like \emph{Newton's descent} or \emph{Line search} could trapped into local minimums, but not convergence to global minimums.\\\\
We overcome this problem by using a finite approximation method original introduced in \cite{carolapani2018bilevel}. Recall the constant $P>0$ given in box-constraint stated in Assumption \ref{kappa_training_ground}.
\begin{define}[The \emph{Finite TrainingGround} and \emph{Finite Grid}]\label{def_BCFTG}
 Let $l\in\mathbb N$ be given.
\begin{enumerate}[1.]
\item
We define the \emph{step size} $\delta_l$ by 
 \be
 \delta_l := P/l;
 \ee
 \item
we define the finite set $\T_l[P]\subset[0,P]$ via
\be
\T_l[P]:=\flp{0,\,\,\delta_l,\,\,2\delta_l,\,\,\ldots,\,\,i\delta_l,\ldots, P}.
\ee
\item
we define the finite set $\T_l[\Sigma]\subset \Sigma$ via
\be
\mathbb T_l[\Sigma]:=\bigcup_{k\geq 1}  T_k[\Sigma]
\ee
where each $T_k[\Sigma]$ is a singleton contains one operator $\B\in\Sigma$ and defined recursively in the following steps. 
\begin{enumerate}[Step 1.] 
\item
Define
\be
\B_0\in\argmin\flp{\norm{\B}:\,\,\B\in\Sigma},\,\,T_0[\Sigma]=\flp{\B_0},\text{ and }\Sigma_0:=\Sigma.
\ee
We also denote $Q_l[\B_0]\subset \Sigma_0$ be the cube centered at $\B_0$ with side length $\Delta_l$. 
\item
Define
\be
\Sigma_1:=\Sigma_{0}\setminus Q_l[\B_{0}],\,\,\B_1\in\argmin\flp{\abs{\B}:\,\,\B\in\Sigma_1},
\ee
and
\be
T_1[\Sigma]:=\flp{\B_1};
\ee
\item[$\vdots$]
\item[Step $j$.]
Define
\be
\Sigma_k:=\Sigma_{k-1}\setminus Q_l[\B_{k-1}],\,\,\B_j\in\argmin\flp{\norm{\B}:\,\,\B\in\Sigma_j},
\ee
and
\be
T_k[\Sigma]:=\flp{\B_j}.
\ee
Repeat until $\Sigma_k=\emptyset$.
\end{enumerate}
\item
we define the \emph{Finite TrainingGround} $\mathbb T_l$ at step $l\in\N$ by
\be
\mathbb T_l:=\mathbb T_l[\alpha]\times \mathbb T_l[\Sigma].
\ee
\item
for $i$, $j\in\N$, we define the $(i,j)$-th \emph{FiniteGrid} at step $l$ by 
\be\label{eq_finite_grid_one}
\mathbb G_l(i,j):=[i\Delta_l,(i+1)\Delta_l]\times Q_l[\B_j].
\ee
\end{enumerate}
\end{define}
\begin{remark}
From the definition of $\Sigma_k$ and $Q_l$, we have, for any $l\in\N$ fixed, there exists an upper bound $M\in\N$, depends on $l$, such that $\Sigma_{M}=\emptyset$. In another word, we have $\mathcal H^0\fsp{\mathbb T_l[\B]}<+\infty$ and hence
\be
\mathcal H^0\fsp{\mathbb T_l}<+\infty,\text{ for each }l\in\N\text{ fixed.}
\ee
Then, the optimal parameters of scheme $\mathcal T$ (global minimizers of $\mathcal A(\cdot,\cdot)$) over finite training ground $\mathbb T_l$
\be
(\alpha_{\T_l},\B_{\T_l})\in\mathbb A[\T_l] :=\argmin\flp{\norm{u_{\alpha,\B}-u_c}_{L^2(Q)}^2:\,\,(\alpha,\B)\in \mathbb T_l},
\ee
can be determined exactly by evaluating $\mathcal A(\cdot)$ over each elements of $\mathbb T_l$.
\end{remark}
The main result of Section \ref{sec_numerical_finiteapprox} reads as follows.
\begin{theorem}[finite approximation and error estimation]\label{PV_finiteapprox_result}
Let a training ground $\mathbb T$ satisfies Assumption \ref{kappa_training_ground} be given, and $\T_l\subset \T$ be constructed as in Definition \ref{def_BCFTG}. Then the following assertions hold:
\begin{enumerate}[1.]
\item
as $l\to\infty$, we have
\be\label{tgv_soln_error}
\operatorname{dist}(\mathbb A[\T],\mathbb A[\T_l])\to 0.
\ee
\item
let $\delta>0$ be given. Then for each $l\in\N$ we have the following estimation hold
\be\label{tgv_cost_error}
{\CC(\alpha_{\T_l},\B_{\T_l})-\CC(\alpha_\T,\B_\T)}\leq 4KP \fmp{O\fsp{P/l}+1/l}^{1/2}\norm{u_\eta}_{W^{d,1}(Q)}^{1/2}/\delta^d+\delta/2,
\ee
for any $(\alpha_\T,\B_\T)\in\mathbb A[\T]$ and $(\alpha_{\T_l},\B_{\T_l})\in\mathbb A[\T_l]$.
\end{enumerate}
\end{theorem}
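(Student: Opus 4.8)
The plan is to derive both assertions from a single quantitative comparison between the value of $\CC$ at an optimizer $(\alpha_\T,\B_\T)\in\mathbb A[\T]$ and at the nearest point of the finite grid $\T_l$. Since $\mathcal H^0(\T_l)<+\infty$ and $(\alpha_{\T_l},\B_{\T_l})$ minimizes $\CC$ over $\T_l$, for any fixed $(\alpha_\T,\B_\T)$ I would first select a grid point $(\alpha',\B')\in\T_l$ with $\abs{\alpha_\T-\alpha'}\le \delta_l=P/l$ and $\abs{\B_\T-\B'}_{\ell^\infty}\le \Delta_l$; such a point exists by the construction in Definition \ref{def_BCFTG}. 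Optimality of $(\alpha_{\T_l},\B_{\T_l})$ over $\T_l$ then gives $\CC(\alpha_{\T_l},\B_{\T_l})\le \CC(\alpha',\B')$, so everything reduces to the stability estimate
\[
\CC(\alpha',\B')-\CC(\alpha_\T,\B_\T)\le \fsp{\text{modulus vanishing as }l\to\infty}+\delta/2 ,
\]
which is exactly \eqref{tgv_cost_error}; Assertion 1, that is \eqref{tgv_soln_error}, will follow once this modulus is shown to vanish.

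For Assertion 1 I would argue by continuity and density. The map $(\alpha,\B)\mapsto u_{\alpha,\B}$ is stable: by the $\Gamma$-convergence result Theorem \ref{thm_Gamma_conv} together with the uniqueness in Proposition \ref{unique_exist_lower}, whenever $\alpha_n\to\alpha_0>0$ and $\B_n\to\B_0$ in $\ell^\infty$ one has $u_{\alpha_n,\B_n}\rightharpoonup u_{\alpha_0,\B_0}$ weakly in $L^2(Q)$, so $\CC$ is lower semicontinuous, and the recovery sequence upgrades this to continuity of $\CC$ on $\T$ (the boundary cases $\alpha_0\in\flp{0,+\infty}$ are handled exactly as in the three cases of Theorem \ref{main_thm}). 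Since $\delta_l,\Delta_l\to 0$, the grids satisfy $\T\subset\operatorname{cl}\fsp{\bigcup_l\T_l}$. A standard $\argmin$-convergence argument then applies: compactness of $\Sigma$ in $\ell^\infty$ and of $[0,P]$ forces any sequence $(\alpha_{\T_l},\B_{\T_l})$ to subconverge to some $(\bar\alpha,\bar\B)\in\T$, and density plus continuity give
\[
\CC(\bar\alpha,\bar\B)\le \liminf_{l\to\infty}\CC(\alpha_{\T_l},\B_{\T_l})\le \limsup_{l\to\infty}\CC(\alpha_{\T_l},\B_{\T_l})\le \CC(\alpha_\T,\B_\T),
\]
so $(\bar\alpha,\bar\B)\in\mathbb A[\T]$, yielding \eqref{tgv_soln_error}.

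The quantitative heart is Assertion 2. Writing $u_1:=u_{\alpha',\B'}$ and $u_2:=u_{\alpha_\T,\B_\T}$ and exploiting the strong convexity of the $L^2$-fidelity, I would add the two Level 2 optimality inequalities from \eqref{ABsolution_map} (testing each minimizer against the other) to obtain
\[
2\norm{u_1-u_2}_{L^2(Q)}^2\le \fmp{\alpha_\T PV_{\B_\T}(u_1)-\alpha' PV_{\B'}(u_1)}+\fmp{\alpha' PV_{\B'}(u_2)-\alpha_\T PV_{\B_\T}(u_2)}.
\]
Each bracket is estimated by splitting the perturbation into its $\alpha$-part and its $\B$-part: the continuity assumptions \eqref{cont_sigma_PV1}--\eqref{cont_sigma_PV2} replace $PV_{\B_\T}$ by $PV_{\B'}$ at multiplicative cost $O(\abs{\B_\T-\B'}_{\ell^\infty})=O(P/l)$, while $\abs{\alpha_\T-\alpha'}\le P/l$ controls the intensity mismatch, together producing the factor $\fmp{O(P/l)+1/l}$. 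The surviving regularization energies $\alpha PV_\B(u_{\alpha,\B})$ are bounded by testing \eqref{ABsolution_map} against the mollified datum $u_\eta^\delta:=u_\eta\ast\eta_\delta$: since $\abs{\B}_{\ell^\infty}\le 1$ one has $PV_\B(u_\eta^\delta)\le K\norm{u_\eta^\delta}_{W^{d,1}(Q)}$, which contributes $\norm{u_\eta}_{W^{d,1}(Q)}$ together with the $1/\delta^d$ coming from the derivative cost of the mollifier, and the fidelity defect $\norm{u_\eta^\delta-u_\eta}_{L^2(Q)}^2$ contributes the additive $\delta/2$. Converting the resulting bound on $\norm{u_1-u_2}_{L^2(Q)}$ into the assessment difference via $\CC(\alpha',\B')-\CC(\alpha_\T,\B_\T)\le \norm{u_1-u_2}_{L^2(Q)}\,\norm{2u_c-u_1-u_2}_{L^2(Q)}$ and collecting the constants ($K$ from the order count, $P$ from the box-constraint, and the factor from the two optimality inequalities) produces the prefactor $4KP$ and hence \eqref{tgv_cost_error}.

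The main obstacle is controlling the regularization energy $\alpha\,PV_\B(u_{\alpha,\B})$ uniformly in $(\alpha,\B)$: the naive bound $PV_\B(u_{\alpha,\B})\le\norm{u_\eta}_{L^2(Q)}^2/\alpha$ degenerates as $\alpha\to 0$, and it is precisely the free mollification scale $\delta$ that resolves this, trading the blow-up $1/\delta^d$ against the fidelity error $\delta/2$. A second delicate point is that the continuity moduli in \eqref{cont_sigma_PV1}--\eqref{cont_sigma_PV2} are multiplicative and of $\min$-type, so they must be applied to the correct minimizer in each bracket to keep the constant on the side of the energy that is already controlled; and because $\CC$ is not convex, Assertion 1 cannot rely on a descent or uniqueness argument and must instead use the subsequential compactness argument above.
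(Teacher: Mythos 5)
Your overall architecture is sound and reaches the stated bound, but the quantitative core travels a genuinely different road from the paper's. The paper obtains the stability of $(\alpha,\B)\mapsto u_{\alpha,\B}$ through Proposition \ref{thm_estimation_pvb_l2}, Corollary \ref{coro_make_in_alpha} and Proposition \ref{alpha_12_B_12}, i.e.\ through the Euler--Lagrange identity $u_{\alpha,\B}-u_\eta=-\alpha\,\partial PV_\B(u_{\alpha,\B})$, maximal monotonicity of the subdifferential, the pointwise characterization of Theorem \ref{thm_point_chara}, a Lebesgue-point argument, and a truncation/shifting step to remove the positivity assumption \eqref{restricted_noisy_corrupted} on $u_\eta$. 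Your derivation instead adds the two Level-2 optimality inequalities and invokes the strong convexity of the $L^2$ fidelity to get $2\norm{u_1-u_2}_{L^2}^2\le[\alpha_\T PV_{\B_\T}(u_1)-\alpha'PV_{\B'}(u_1)]+[\alpha'PV_{\B'}(u_2)-\alpha_\T PV_{\B_\T}(u_2)]$ directly; this is correct, bypasses essentially all of Sections \ref{fixed_B_operator}--\ref{fixed_alpha_operator}, and even leaves you with the ``diagonal'' energies $PV_{\B'}(u_{\alpha',\B'})+PV_{\B_\T}(u_{\alpha_\T,\B_\T})$ rather than the paper's mixed terms. Your grid step (nearest grid point plus optimality of $(\alpha_{\T_l},\B_{\T_l})$ over $\T_l$) likewise replaces the paper's two-case oscillation-over-a-cell argument with something simpler, and your compactness/continuity argument for Assertion~1 fills in what the paper dismisses as a ``direct result'' of Theorem \ref{thm_Gamma_conv}.

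One step needs repair. You propose to bound the surviving energies by testing \eqref{ABsolution_map} against the competitor $u_\eta^\delta=u_\eta\ast\eta_\delta$; this yields $\alpha\,PV_\B(u_{\alpha,\B})\le\norm{u_\eta^\delta-u_\eta}_{L^2}^2+\alpha K\norm{u_\eta^\delta}_{W^{d,1}}$, and after dividing by $\alpha$ the fidelity defect reappears as $\norm{u_\eta^\delta-u_\eta}_{L^2}^2/\alpha$, which is exactly the degeneration as $\alpha\to 0$ you set out to avoid --- your bracket estimate needs a bound on $PV_{\B'}(u_1)$ itself, not on $\alpha'PV_{\B'}(u_1)$, because of the term $(\alpha_\T-\alpha')PV_{\B'}(u_1)$. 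The fix is the paper's: replace the datum by $u_\eta^\delta$ \emph{inside} the Level-2 problem, where testing against the now-smooth datum gives the $\alpha$-uniform bound $PV_\B(u^\delta_{\alpha,\B})\le K\norm{u_\eta^\delta}_{W^{d,1}}\lesssim \norm{u_\eta}_{W^{d,1}}/\delta^d$ with zero fidelity defect, and then transfer back via the nonexpansive estimate $\norm{u_{\alpha,\B}-u^\delta_{\alpha,\B}}_{L^2}\le\norm{u_\eta-u_\eta^\delta}_{L^2}$ of Proposition \ref{prop_approx_fiderly}; it is this last transfer, not the energy bound, that produces the additive $\delta/2$. You name the right difficulty and the right tool (the free mollification scale), so this is an implementation error rather than a missing idea, but as written that estimate does not close.
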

We sub-divide our argument into Section \ref{fixed_B_operator} and Section \ref{fixed_alpha_operator}, in which we discuss the properties of reconstructed image $u_{\alpha,\B}$ with $\B\in\Sigma$ fixed and $\alpha\in\R^+$ fixed, respectively.
\subsubsection{Properties of reconstructed image $u_{\alpha,\B}$ with respect to $\alpha\in\R^+$}\label{fixed_B_operator}
Since $\B\in\Sigma$ is fixed, we abbreviate $u_{\alpha,\B}$ and $PV_\B$ by $u_\alpha$ and $PV$, respectively,  in Section \ref{fixed_B_operator}. 
\begin{proposition}\label{g_alpha_auxiliary}
We collect two auxiliary results in this proposition.
\begin{enumerate}[1.]
\item
The function $g(\alpha):=PV(u_\alpha)$ is continuous decreasing;
\item
Assume in addition that 
\be\label{eq_g_alpha_auxiliary1}
PV(u_\eta)>PV(u_c).
\ee
Then, there exists $\alpha>0$ such that 
\be\label{eq_g_alpha_auxiliary2}
\norm{u_\alpha-u_c}_{L^2(Q)}< \norm{u_\eta-u_c}.
\ee
\end{enumerate} 
\end{proposition}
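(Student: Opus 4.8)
The plan is to treat the two assertions separately, both resting on the variational characterization of $u_\alpha$ as the \emph{unique} minimizer furnished by Proposition \ref{unique_exist_lower}, together with the $\Gamma$-convergence and subdifferential results already established.

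\textbf{Assertion 1.} For monotonicity I would compare the minimizers at two parameters $0<\alpha_1<\alpha_2$ by writing the defining inequality for each (testing $u_{\alpha_1}$ against the competitor $u_{\alpha_2}$ and vice versa) and adding the two; the fidelity terms cancel and one is left with $(\alpha_1-\alpha_2)\fsp{PV(u_{\alpha_1})-PV(u_{\alpha_2})}\leq 0$, so that $\alpha_1-\alpha_2<0$ forces $g(\alpha_1)\geq g(\alpha_2)$, i.e. $g$ is decreasing. For continuity at a fixed $\alpha>0$ I would invoke Theorem \ref{thm_Gamma_conv} with $\B$ frozen and $\alpha_n\to\alpha$: equicoercivity in the weak $L^2$ topology gives $u_{\alpha_n}\wto u_\alpha$ and convergence of the minimal energies $E(\alpha_n)\to E(\alpha)$, where $E(\alpha):=f(\alpha)+\alpha g(\alpha)$ and $f(\alpha):=\norm{u_\alpha-u_\eta}_{L^2(Q)}^2$. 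Weak lower semicontinuity of the squared norm and Proposition \ref{lower_semicontity} separately yield $f(\alpha)\leq\liminf f(\alpha_n)$ and $g(\alpha)\leq\liminf g(\alpha_n)$; combining these with $E(\alpha_n)\to E(\alpha)$ along a subsequence realizing $\limsup g(\alpha_n)$, the strict positivity of $\alpha$ squeezes both $f(\alpha_n)\to f(\alpha)$ and $g(\alpha_n)\to g(\alpha)$, proving continuity of $g$ on $(0,\infty)$.

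\textbf{Assertion 2.} Here I would expand the distance directly. Set $w_\alpha:=u_\alpha-u_\eta$. The Euler--Lagrange condition for \eqref{ABsolution_map} reads $0\in 2(u_\alpha-u_\eta)+\alpha\,\partial PV(u_\alpha)$, so $w_\alpha=-\tfrac{\alpha}{2}p_\alpha$ for some $p_\alpha\in\partial PV(u_\alpha)$. Because $PV$ is a seminorm, Theorem \ref{main_sub_diff_pvb} and Remark \ref{rmk_test_subgradient} supply the two scalar relations $\fjp{u_\alpha,p_\alpha}=PV(u_\alpha)$ and $\fjp{u_c,p_\alpha}\leq PV(u_c)$. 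Expanding the square gives
\[
\norm{u_\alpha-u_c}_{L^2(Q)}^2-\norm{u_\eta-u_c}_{L^2(Q)}^2=-\alpha\fjp{u_\eta-u_c,p_\alpha}+\norm{w_\alpha}_{L^2(Q)}^2,
\]
and substituting $\fjp{u_\eta,p_\alpha}=PV(u_\alpha)+\tfrac{\alpha}{2}\norm{p_\alpha}_{L^2(Q)}^2$ and $\norm{w_\alpha}_{L^2(Q)}^2=\tfrac{\alpha^2}{4}\norm{p_\alpha}_{L^2(Q)}^2$, the surviving quadratic term $-\tfrac{\alpha^2}{4}\norm{p_\alpha}_{L^2(Q)}^2$ is discarded as nonpositive, leaving the clean bound $\norm{u_\alpha-u_c}_{L^2(Q)}^2-\norm{u_\eta-u_c}_{L^2(Q)}^2\leq \alpha\fsp{PV(u_c)-PV(u_\alpha)}$.

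To finish I would establish the boundary behaviour $g(\alpha)=PV(u_\alpha)\to PV(u_\eta)$ as $\alpha\to 0^+$: the strong convergence $u_\alpha\to u_\eta$ in $L^2(Q)$ (exactly the Case 2 argument in the proof of Theorem \ref{main_thm}) yields $\liminf PV(u_\alpha)\geq PV(u_\eta)$ via Proposition \ref{lower_semicontity}, while testing $u_\alpha$ against the competitor $u_\eta$ gives the matching upper bound $PV(u_\alpha)\leq PV(u_\eta)$. Hence under the hypothesis $PV(u_\eta)>PV(u_c)$ the bracket $PV(u_c)-PV(u_\alpha)$ is strictly negative for all sufficiently small $\alpha>0$, so the displayed estimate delivers $\norm{u_\alpha-u_c}_{L^2(Q)}<\norm{u_\eta-u_c}_{L^2(Q)}$, as claimed. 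I expect the main obstacle to be the continuity in Assertion 1, since $\Gamma$-convergence only returns convergence of the \emph{combined} minimal energy, and isolating the regularization part $g(\alpha_n)$ is precisely where the lower semicontinuity of each piece must be balanced against the strict positivity of $\alpha$; the limit $g(0^+)=PV(u_\eta)$ needed above is a degenerate version of this same difficulty and must be argued by hand rather than read off from Assertion 1.
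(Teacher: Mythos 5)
Your proposal is correct and follows essentially the same route as the paper: the same add-the-two-optimality-inequalities argument for monotonicity, and the same subdifferential expansion $\fjp{p_\alpha,u_\alpha}=PV(u_\alpha)$, $\fjp{p_\alpha,u_c}\leq PV(u_c)$ leading to the bound $\norm{u_\alpha-u_c}_{L^2(Q)}^2-\norm{u_\eta-u_c}_{L^2(Q)}^2\leq \alpha\fsp{PV(u_c)-PV(u_\alpha)}$. The only divergence is that you explicitly prove $g(\alpha)\to PV(u_\eta)$ as $\alpha\to 0^+$ (via lower semicontinuity plus the competitor $u_\eta$) and spell out why the $\Gamma$-limit of the combined energy yields continuity of the regularizer part alone; the paper leaves both points implicit, so your version is, if anything, more complete.
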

\begin{proof}
We show Assertion 1 first. The continuity of $g(\alpha)$ can be deduced from Theorem \ref{thm_Gamma_conv}. Next, let $0\leq\alpha_1<\alpha_2<+\infty$ be given, we observe, from the optimality condition of \eqref{ABsolution_map}, that
\be
\norm{u_{\alpha_1}-{u_\eta}}_{L^2(Q)}^2+\alpha_1 PV\fsp{u_{\alpha_1}}\leq \norm{u_{\alpha_2}-u_{\eta}}_{L^2(Q)}^2+\alpha_1 PV\fsp{u_{\alpha_2}}
\ee
and 
\be
\norm{u_{\alpha_2}-u_\eta}_{L^2(Q)}^2+\alpha_2 PV\fsp{u_{\alpha_2}}\leq \norm{u_{\alpha_1}-u_\eta}_{L^2(Q)}^2+\alpha_2 PV\fsp{u_{\alpha_1}}.
\ee
Adding up the previous two inequalities yields
\be
\alpha_1 PV\fsp{u_{\alpha_1}}+\alpha_2 PV\fsp{u_{\alpha_2}}\leq \alpha_1 PV\fsp{u_{\alpha_2}}+\alpha_2 PV\fsp{u_{\alpha_1}},
\ee
which implies that $PV\fsp{u_{\alpha_2}}\leq  PV\fsp{u_{\alpha_1}}$ as desired. \\\\
Now we claim Assertion 2. From Theorem \ref{main_sub_diff_pvb}, we have $\partial PV(u_\alpha)$, the sub-differential of $PV$ at $u_{\alpha}$, is well defined. We observe, for any $\alpha>0$, that
\begin{align}
&\norm{u_\eta - u_c}_{L^2(Q)}^2-\norm{u_{\alpha} - u_c}_{L^2(Q)}^2 \\
&= 2\fjp{u_\eta-u_{\alpha},u_{\alpha}-u_c}+\norm{u_\eta-u_{\alpha}}_{L^2(Q)}^2
= 2\alpha\fjp{\partial PV(u_\alpha),u_{\alpha}-u_c}+\norm{u_\eta-u_{\alpha}}_{L^2(Q)}^2\\
&= 2\alpha\fjp{\partial PV(u_\alpha),u_{\alpha}}-2\alpha\fjp{\partial PV(u_\alpha),u_c}+\norm{u_\eta-u_{\alpha}}_{L^2(Q)}^2 \\
&\geq 2\alpha\fmp{PV(u_{\alpha}) -PV(u_c)}+\norm{u_\eta-u_{\alpha}}_{L^2(Q)}^2,
\end{align}
where at the last inequality we use the property of sub-differential operator, and we obtain that
\be\label{alpha_l_lb_pre2}
\norm{u_\eta - u_c}_{L^2(Q)}^2-\norm{u_{\alpha} - u_c}_{L^2(Q)}^2\geq 2\alpha\fmp{PV(u_{\alpha}) -PV(u_c)}+\norm{u_\eta-u_{\alpha}}_{L^2(Q)}^2.
\ee
Next, in view of Assertion 1, we have that $PV(u_{\alpha})$ is continuous decreasing and hence, together with \eqref{eq_g_alpha_auxiliary1}, there exists $\bar \alpha>0$ such that 
\be\label{alpha_l_lb_pre}
PV(u_{\bar\alpha}) -PV(u_c)\geq\frac14\fmp{PV(u_\eta)-PV(u_c)}>0.
\ee
Hence, we conclude \eqref{eq_g_alpha_auxiliary2} by combining \eqref{alpha_l_lb_pre2} and \eqref{alpha_l_lb_pre}.
\end{proof}

\begin{proposition}
Let $\alpha_1$ and $\alpha_2\in\R^+$ be given. Then we have that
\be
\norm{u_{\alpha_1}-u_{\alpha_2}}_{L^2(Q)}^2\leq \abs{\alpha_1-\alpha_2} \fsp{PV(u_{\alpha_1})+PV(u_{\alpha_2})}
\ee
as desired.
\end{proposition}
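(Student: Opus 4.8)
The plan is to exploit the strong convexity of the $L^2$ fidelity term together with the minimality of $u_{\alpha_1}$ and $u_{\alpha_2}$ for their respective Level 2 functionals. Write $J_\alpha(u):=\norm{u-u_\eta}_{L^2(Q)}^2+\alpha PV(u)$, so that by Proposition \ref{unique_exist_lower} the functions $u_{\alpha_1}$ and $u_{\alpha_2}$ are the unique minimizers of $J_{\alpha_1}$ and $J_{\alpha_2}$ over $L^1(Q)$. The first step is to upgrade the bare minimality inequality to a quadratic one: since $u\mapsto\norm{u-u_\eta}_{L^2(Q)}^2$ is exactly quadratic and $PV$ is convex, I would establish the sharpened bound $J_\alpha(v)\geq J_\alpha(u_\alpha)+\norm{v-u_\alpha}_{L^2(Q)}^2$ for every $v\in L^1(Q)$.

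To obtain this refinement I would test $J_\alpha$ along the segment $t\mapsto(1-t)u_\alpha+tv$, combine convexity of $PV$ with the exact identity $\norm{(1-t)(u_\alpha-u_\eta)+t(v-u_\eta)}_{L^2(Q)}^2=(1-t)\norm{u_\alpha-u_\eta}_{L^2(Q)}^2+t\norm{v-u_\eta}_{L^2(Q)}^2-t(1-t)\norm{u_\alpha-v}_{L^2(Q)}^2$, and use the minimality $J_\alpha(u_\alpha)\leq J_\alpha((1-t)u_\alpha+tv)$; dividing by $t>0$ and letting $t\to0^+$ then yields the claim. This is just the statement that $J_\alpha$ is $2$-strongly convex with $u_\alpha$ as its minimizer.

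The second step is purely algebraic: apply the refined inequality twice, once with $(\alpha,v)=(\alpha_1,u_{\alpha_2})$ and once with $(\alpha,v)=(\alpha_2,u_{\alpha_1})$, and add the two resulting inequalities. The fidelity terms $\norm{u_{\alpha_1}-u_\eta}_{L^2(Q)}^2$ and $\norm{u_{\alpha_2}-u_\eta}_{L^2(Q)}^2$ cancel, leaving $2\norm{u_{\alpha_1}-u_{\alpha_2}}_{L^2(Q)}^2\leq(\alpha_1-\alpha_2)\fsp{PV(u_{\alpha_2})-PV(u_{\alpha_1})}$. Finally, since $PV\geq0$ one has $\abs{PV(u_{\alpha_2})-PV(u_{\alpha_1})}\leq PV(u_{\alpha_1})+PV(u_{\alpha_2})$, so the right-hand side is bounded by $\abs{\alpha_1-\alpha_2}\fsp{PV(u_{\alpha_1})+PV(u_{\alpha_2})}$; this in fact delivers the stated estimate with a factor $1/2$ to spare.

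The only non-routine ingredient is the strong-convexity upgrade in the first two paragraphs; everything afterwards is rearrangement. The point to handle with care is the sign bookkeeping when adding the two inequalities, but the monotonicity of $\alpha\mapsto PV(u_\alpha)$ from Proposition \ref{g_alpha_auxiliary} guarantees $(\alpha_1-\alpha_2)\fsp{PV(u_{\alpha_2})-PV(u_{\alpha_1})}\geq0$, so the intermediate bound is automatically consistent and passing to $\abs{\alpha_1-\alpha_2}$ loses nothing essential.
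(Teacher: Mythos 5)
Your argument is correct, and it takes a genuinely different route from the paper. The paper's proof starts from the Euler--Lagrange (optimality) conditions $u_{\alpha_i}-u_\eta=-\alpha_i\,\partial PV(u_{\alpha_i})$, subtracts them, tests against $u_{\alpha_1}-u_{\alpha_2}$, discards one term by maximal monotonicity of $\partial PV$, and then bounds $\fjp{\partial PV(u_{\alpha_2}),u_{\alpha_1}-u_{\alpha_2}}\leq PV(u_{\alpha_1}-u_{\alpha_2})\leq PV(u_{\alpha_1})+PV(u_{\alpha_2})$ via Remark \ref{rmk_test_subgradient}; this whole chain leans on the sub-differential characterization of Theorem \ref{main_sub_diff_pvb}. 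You instead never touch the sub-differential: you upgrade minimality to the $2$-strong-convexity inequality $J_\alpha(v)\geq J_\alpha(u_\alpha)+\norm{v-u_\alpha}_{L^2(Q)}^2$ using only the exact quadratic identity for the fidelity term and convexity of $PV$, then symmetrize. What your approach buys is (i) elementarity --- no existence of sub-gradients, no monotonicity of $\partial PV$, no testing lemma are needed, only that $u_{\alpha_i}$ minimize and that $PV(u_{\alpha_i})<+\infty$ so the cancellations are legitimate; and (ii) a sharper constant, since you obtain $2\norm{u_{\alpha_1}-u_{\alpha_2}}_{L^2(Q)}^2\leq(\alpha_1-\alpha_2)\fsp{PV(u_{\alpha_2})-PV(u_{\alpha_1})}$, which beats the stated bound by a factor of $2$ and, as you note, is sign-consistent with the monotonicity of $\alpha\mapsto PV(u_\alpha)$ from Proposition \ref{g_alpha_auxiliary}. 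The paper's route, for its part, is the one that generalizes directly to the comparison in $\B$ (Proposition \ref{thm_estimation_pvb_l2}), where the sub-differential machinery is genuinely needed; for the present statement your argument is a clean and arguably preferable alternative.
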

\begin{proof}
Without lose of generality we assume that $\alpha_1<\alpha_2$. In view of Theorem \ref{main_sub_diff_pvb}, and from the optimality condition of \eqref{ABsolution_map} we have 
\be
u_{\alpha_1}-u_\eta=-\alpha_1 \partial PV(u_{\alpha_1})\text{ and }u_{\alpha_2}-u_\eta=-\alpha_2\partial PV(u_{\alpha_2}).
\ee
Subtracting one from another and multiplying with $u_{\alpha_1}-u_{\alpha_2}$ and integration over $Q$, we obtain that
\begin{align}\label{semi_subdiff_one}
&\norm{u_{\alpha_1}-u_{\alpha_2}}_{L^2(Q)}^2\\
&=\alpha_1\fjp{\partial PV(u_{\alpha_2})-\partial PV(u_{\alpha_1}),u_{\alpha_1}-u_{\alpha_2}}+(\alpha_2-\alpha_1)\fjp{\partial PV(u_{\alpha_2}),u_{\alpha_1}-u_{\alpha_2}}.
\end{align}
Since the seminorm $PV$ is proper, $l.s.c.$, and convex, we have $\partial PV$ is a monotone maximal operator and hence
\be
\fjp{\partial PV(u_{\alpha_2})-\partial PV(u_{\alpha_1}),u_{\alpha_2}-u_{\alpha_1}}\geq 0.
\ee
This, together with \eqref{semi_subdiff_one} and Assertion 1 from Proposition \ref{g_alpha_auxiliary}, we obtain that 
\begin{align}
&\norm{u_{\alpha_1}-u_{\alpha_2}}_{L^2(Q)}^2\leq (\alpha_2-\alpha_1)\fjp{\partial PV(u_{\alpha_2}),u_{\alpha_1}-u_{\alpha_2}}\\
&\leq (\alpha_2-\alpha_1)PV(u_{\alpha_1}-u_{\alpha_2})\leq (\alpha_2-\alpha_1)\fmp{PV(u_{\alpha_1})+PV(u_{\alpha_2})},
\end{align}
where at the second last inequality we used Remark \ref{rmk_test_subgradient}, and hence the thesis.
\end{proof}
\subsubsection{Properties of reconstructed image $u_{\alpha,\B}$ with respect to $\B\in\Sigma$}\label{fixed_alpha_operator}
Analogously to Section  \ref{fixed_B_operator}, in Section \ref{fixed_alpha_operator} we abbreviate $u_{\alpha,\B}$ by $u_\B$,  for $\alpha\in\R^+$ fixed. Recall the structure of $\B$ from Notation \ref{operator_B}.\\\\
Moreover, in Section \ref{fixed_alpha_operator}, we further restrict the corrupted image $u_\eta\in L^2(Q)$ satisfies that there exists $0<M_1<M_2<+\infty$ such that 
\be\label{restricted_noisy_corrupted}
0<M_1\leq u_\eta(x)\leq M_2<+\infty,\text{ for a.e. }x\in Q.
\ee
In this way, we have that the reconstructed image
\be
u_\B=\argmin\flp{\norm{u-u_\eta}_{L^2(Q)}^2+PV_\B(u):\,\, u\in L^1(Q)}
\ee
also satisfies that 
\be\label{restricted_reconstruction}
M_1\leq u_\B(x)\leq M_2,\text{ for a.e. }x\in Q.
\ee

Before we move to next proposition, we call the following result regarding the Lebesgue point.
\begin{theorem}[Lebesgue-besicovitch differentiation theorem]\label{lebesgue_point_thm}
Let $\mu$ be a Radon measure on $\rn$ and $f\in L^1_\loc(\rn,\mu)$. Then
\be
\lim_{r\to 0}\fint_{B(x,r)}f\,d\mu = f(x)
\ee
for $\mu$ a.e. $x\in\rn$.
\end{theorem}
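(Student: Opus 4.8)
The plan is to follow the classical route, reducing the general integrable case to the trivial continuous case by means of a maximal-function estimate; since localizing to a large ball lets me assume $f\in L^1(\rn,\mu)$, the stated $L^1_{\loc}$ version follows immediately. First I would introduce the Hardy--Littlewood type maximal operator adapted to $\mu$,
\be
M_\mu f(x):=\sup_{r>0}\fint_{B(x,r)}\abs{f}\,d\mu,
\ee
and establish the weak-type $(1,1)$ bound
\be
\mu\fsp{\flp{x:\,\,M_\mu f(x)>\lambda}}\leq \frac{C_N}{\lambda}\norm{f}_{L^1(\rn,\mu)},\quad \lambda>0,
\ee
for a purely dimensional constant $C_N$. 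The essential tool here is the Besicovitch covering theorem: from the family of balls on which the averages of $\abs{f}$ exceed $\lambda$ it extracts a countable subfamily of \emph{bounded overlap} still covering the superlevel set, and summing $\mu$-masses over this subfamily gives the bound. This is exactly the ingredient that makes the statement valid for an arbitrary Radon measure, with no doubling hypothesis on $\mu$.

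Second, I would dispose of the continuous case. For a general measurable $\phi$ set
\be
\Lambda \phi(x):=\limsup_{r\to 0}\abs{\fint_{B(x,r)}\phi\,d\mu-\phi(x)}.
\ee
If $g\in C_c(\rn)$, then by continuity $\fint_{B(x,r)}g\,d\mu\to g(x)$ at \emph{every} $x$, so $\Lambda g\equiv 0$. Third, using the density of $C_c(\rn)$ in $L^1(\rn,\mu)$, for arbitrary $\e>0$ I would split $f=g+h$ with $g$ continuous and $\norm{h}_{L^1(\rn,\mu)}<\e$. Subadditivity of $\Lambda$ together with $\Lambda g\equiv 0$ yields $\Lambda f\leq \Lambda h\leq M_\mu h+\abs{h}$, so that for each $\lambda>0$
\be
\flp{x:\,\,\Lambda f(x)>2\lambda}\subset \flp{x:\,\,M_\mu h(x)>\lambda}\cup\flp{x:\,\,\abs{h(x)}>\lambda}.
\ee
Combining the weak-type estimate with Chebyshev's inequality gives $\mu(\flp{\Lambda f>2\lambda})\leq (C_N+1)\e/\lambda$; letting $\e\to 0$ forces this set to be $\mu$-null for every $\lambda>0$, and hence $\Lambda f=0$ $\mu$-a.e., which is precisely the assertion.

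The main obstacle is the weak-type bound, and specifically the necessity of invoking the Besicovitch covering theorem in place of the simpler Vitari $5r$-covering lemma. Because $\mu$ may be any Radon measure, one cannot pass from the selected balls $B(x,r)$ to the enlarged balls $B(x,5r)$ while controlling $\mu$-mass, so the usual Vitali argument collapses; Besicovitch's theorem circumvents this by supplying a bounded-overlap subfamily of the \emph{original} balls, whose $\mu$-masses can be summed directly. Once this covering estimate is secured, the continuous approximation and the density/limit passage are entirely routine.
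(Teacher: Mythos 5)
The paper offers no proof of this statement: it is quoted verbatim as the classical Lebesgue--Besicovitch differentiation theorem (the standard reference is \cite[Section 1.7]{evans2015measure}), and is used as a black box in the proof of Proposition \ref{thm_estimation_pvb_l2}. Your argument is the standard textbook proof of that classical result and is correct in outline: the Besicovitch covering theorem (rather than the Vitali $5r$-lemma, whose name you misspell as ``Vitari'') gives the weak-type $(1,1)$ bound for the $\mu$-adapted maximal operator without any doubling hypothesis, the continuous case is immediate, and the density of $C_c(\rn)$ in $L^1(\rn,\mu)$ closes the argument via Chebyshev. Two routine points you should make explicit if you write this out in full: first, the averages $\fint_{B(x,r)}f\,d\mu$ are only defined where $\mu(B(x,r))>0$, so the conclusion should be read on $\operatorname{spt}\mu$, which carries full $\mu$-measure; second, to sum $\mu$-masses in the weak-type estimate one typically intersects the superlevel set $\flp{M_\mu f>\lambda}$ with a large ball before invoking Besicovitch (so that the selected radii are bounded and the measure of the set being covered is finite), and then exhausts $\rn$. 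Neither point affects the validity of your approach, and since the paper supplies no proof of its own there is nothing further to compare against.
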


\begin{proposition}\label{thm_estimation_pvb_l2}
Let $u_\eta\in L^2(Q)$ satisfies \eqref{restricted_noisy_corrupted} be given. Let $\B_1$ and $\B_2\in \Sigma$ be given. Then we have that 
\be
\norm{u_{\B_1}-u_{\B_2}}^2_{L^2(Q)}\leq O\fsp{\abs{\B_1- \B_2}_{\ell^\infty}}\fmp{PV_{\B_1}(u_{\B_1})+PV_{\B_1}(u_{\B_2})},
\ee
where $u_{\B}$ is defined in \eqref{ABsolution_map}.
\end{proposition}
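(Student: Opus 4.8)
The plan is to follow the template of the preceding $(\alpha_1,\alpha_2)$-estimate, but to replace the monotonicity argument — which is unavailable here because the two reconstructions minimize functionals built from \emph{different} seminorms — by a four-term expansion that is closed up using the continuity hypothesis \eqref{cont_sigma_PV1}. Since $\alpha$ is fixed (equal to $1$) throughout this subsection, the optimality conditions for \eqref{ABsolution_map} read $u_{\B_1}-u_\eta=-\partial PV_{\B_1}(u_{\B_1})$ and $u_{\B_2}-u_\eta=-\partial PV_{\B_2}(u_{\B_2})$. First I would fix the selections $w_i:=u_\eta-u_{\B_i}\in\partial PV_{\B_i}(u_{\B_i})$, which by Theorem \ref{main_sub_diff_pvb} are of the form $w_i=\B_i^\ast v_i$ with $\norm{v_i}_{L^\infty(Q)}\le 1$; the standing bounds \eqref{restricted_noisy_corrupted}--\eqref{restricted_reconstruction} ensure $u_{\B_i}\in L^2(Q)\cap BV_{\B_i}(Q)$ and $w_i\in L^2(Q)$, so that representation and all pairings below are legitimate. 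Subtracting the two identities and testing against $u_{\B_1}-u_{\B_2}$ gives
\bes
\norm{u_{\B_1}-u_{\B_2}}_{L^2(Q)}^2=\fjp{w_2-w_1,\,u_{\B_1}-u_{\B_2}}.
\ees

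Next I would expand the right-hand side into the four terms $\fjp{w_2,u_{\B_1}}$, $-\fjp{w_2,u_{\B_2}}$, $-\fjp{w_1,u_{\B_1}}$, $\fjp{w_1,u_{\B_2}}$ and evaluate them separately. For the two diagonal terms, Theorem \ref{main_sub_diff_pvb} yields exactly $\fjp{w_i,u_{\B_i}}=\int_Q u_{\B_i}\,\B_i^\ast v_i\,dx=PV_{\B_i}(u_{\B_i})$; for the two off-diagonal terms, the testing inequality of Remark \ref{rmk_test_subgradient} (valid since $\norm{v_i}_{L^\infty}\le1$) gives $\fjp{w_i,u_{\B_j}}=\int_Q u_{\B_j}\,\B_i^\ast v_i\,dx\le PV_{\B_i}(u_{\B_j})$. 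Collecting the four estimates,
\bes
\norm{u_{\B_1}-u_{\B_2}}_{L^2(Q)}^2\le\fmp{PV_{\B_2}(u_{\B_1})-PV_{\B_1}(u_{\B_1})}+\fmp{PV_{\B_1}(u_{\B_2})-PV_{\B_2}(u_{\B_2})}.
\ees

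Finally I would close each bracket with the continuity assumption \eqref{cont_sigma_PV1}: applied with $u=u_{\B_1}$ it bounds the first bracket by $O\fsp{\abs{\B_1-\B_2}_{\ell^\infty}}PV_{\B_1}(u_{\B_1})$, and applied with $u=u_{\B_2}$ it bounds the second by $O\fsp{\abs{\B_1-\B_2}_{\ell^\infty}}PV_{\B_1}(u_{\B_2})$, which together give the assertion. The main obstacle is conceptual rather than computational: because $u_{\B_1}$ and $u_{\B_2}$ are optimal for \emph{different} regularizers, one cannot subtract a single subdifferential and invoke monotonicity as in the $\alpha$-case, so genuinely mixed quantities such as $PV_{\B_2}(u_{\B_1})-PV_{\B_1}(u_{\B_1})$ appear. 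The purpose of the four-term splitting is precisely to isolate these, and the continuity hypothesis \eqref{cont_sigma_PV1}--\eqref{cont_sigma_PV2} is exactly what converts them into the desired $O(\abs{\B_1-\B_2}_{\ell^\infty})$ factor.
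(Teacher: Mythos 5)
Your proof is correct, but it follows a genuinely different and in fact leaner route than the paper's. The paper inserts the mixed subgradient $\partial PV_{\B_2}(u_{\B_1})$ into the difference $\partial PV_{\B_2}(u_{\B_2})-\partial PV_{\B_1}(u_{\B_1})$, kills one piece by monotonicity of $\partial PV_{\B_2}$, and then handles the remaining piece $\fjp{\partial PV_{\B_2}(u_{\B_1})-\partial PV_{\B_1}(u_{\B_1})),\,u_{\B_1}-u_{\B_2}}$ by deriving a \emph{pointwise a.e.} comparison of the two subgradient densities $\B_2^\ast v_{\B_2}$ and $\B_1^\ast v_{\B_1}$ (via Proposition \ref{nested_sub_gradient} and the Lebesgue differentiation theorem), which is exactly where the standing positivity and boundedness assumption \eqref{restricted_noisy_corrupted} enters: one divides by $u_{\B_1}\geq M_1$ and tests against the shifted positive function $-u_{\B_2}+2M_2$. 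You instead run the classical symmetric argument, pairing each optimality relation against $u_{\B_1}-u_{\B_2}$ and using only the exact duality $\fjp{w_i,u_{\B_i}}=PV_{\B_i}(u_{\B_i})$ from Theorem \ref{main_sub_diff_pvb} together with the one-sided bound $\fjp{w_i,u_{\B_j}}\leq PV_{\B_i}(u_{\B_j})$ from Remark \ref{rmk_test_subgradient}, before closing with \eqref{cont_sigma_PV1} applied at $u_{\B_1}$ and at $u_{\B_2}$. What this buys: you never need $\partial PV_{\B_2}(u_{\B_1})$ to be nonempty, you avoid the measure-theoretic machinery entirely, and --- contrary to your own framing --- your argument never actually uses \eqref{restricted_noisy_corrupted} or \eqref{restricted_reconstruction} (the facts $u_{\B_i}\in L^2(Q)\cap BV_{\B_i}(Q)$ and $w_i\in L^2(Q)$ already follow from $u_\eta\in L^2(Q)$ and the finiteness of the infimum), so you prove the proposition without the boundedness hypothesis and would render the truncation argument of Corollary \ref{coro_make_in_alpha} unnecessary. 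What the paper's longer route buys is the pointwise subgradient comparison \eqref{sub_gradient_xiangaixiangsha}, which is stronger information than the $L^2$ estimate requires. Two cosmetic points: both you and the paper drop the factor $2$ coming from differentiating $\norm{u-u_\eta}_{L^2}^2$, which only improves the constant; and to make the grouping $PV_{\B_2}(u_{\B_1})-PV_{\B_1}(u_{\B_1})$ legitimate you should note explicitly that $PV_{\B_2}(u_{\B_1})<+\infty$, which follows immediately from \eqref{cont_sigma_PV1}.
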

\begin{proof}
By Theorem \ref{main_sub_diff_pvb}, we have the sub-differential $\partial PV_{\B_1}$ and $\partial PV_{\B_2}$ are well defined. Then, by the optimality condition of \eqref{ABsolution_map} we have that
\be
u_{\B_1}-u_\eta=-\partial PV_{\B_1}(u_{\B_1})\text{ and }u_{\B_2}-u_\eta=-\partial PV_{\B_2}(u_{\B_2}),
\ee
Subtracting with one from another, we have that
\begin{align}
&u_{\B_1}-u_{\B_2} = \partial PV_{\B_2}(u_{\B_2})-\partial PV_{\B_1}(u_{\B_1}) \\
&= \partial PV_{\B_2}(u_{\B_2})-\partial PV_{\B_2}(u_{\B_1})+\partial PV_{\B_2}(u_{\B_1})- \partial PV_{\B_1}(u_{\B_1}).
\end{align}
Multiplying both side by $u_{\B_1}-u_{\B_2}$ and integrate over $Q$, we obtain that
\begin{align}\label{p_var_est_1}
\norm{u_{\B_2}-u_{\B_1}}_{L^2(Q)}^2=&-\fjp{\partial PV_{\B_2}(u_{\B_2})-\partial PV_{\B_2}(u_{\B_1}),u_{\B_2}-u_{\B_1}}\\
&+\fjp{\partial PV_{\B_2}(u_{\B_1})- \partial PV_{\B_1}(u_{\B_1}),u_{\B_1}-u_{\B_2}}.
\end{align}
Since $PV_\B$ is convex, $\partial PV_\B$ is a monotone maximal operator. Therefore, we have
\be\label{monotone_sub_grad}
\fjp{\partial PV_{\B_2}(u_{\B_2})-\partial PV_{\B_2}(u_{\B_1}),u_{\B_2}-u_{\B_1}}\geq 0.
\ee
We next estimate the second part of \eqref{p_var_est_1}. Firstly, from the definition of sub-gradient, we have
\be\label{l2_beta_estimation3}
\fjp{\partial PV_{\B_2}(u_{\B_1})-\partial PV_{\B_1}(u_{\B_1}),u_{\B_1}} = PV_{\B_2}(u_{\B_1})-PV_{\B_1}(u_{\B_1})\leq cPV_{\B_1}(u_{\B_1}).
\ee 
where 
\be
c:=O\fsp{\abs{\B_1-\B_2}_{\ell^\infty}}
\ee
is the constant used in \eqref{cont_sigma_PV1}.  Moreover, from \eqref{cont_sigma_PV1} we also deduce that 
\be\label{eq_c_difference}
-c\abs{\B_2u_{\B_1}}\leq \abs{\B_2u_{\B_1}}(V)-\abs{\B_1u_{\B_1}}(V)\leq c\abs{\B_2u_{\B_1}}.
\ee
Next, Let $v_{\B_1}$ and $v_{\B_2}$ be obtained from Proposition \ref{main_sub_diff_pvb} as the sub-differential of $PV_{\B_1}(u_{\B_1})$ and $PV_{\B_2}(u_{\B_1})$, respectively. Then, by Proposition \ref{nested_sub_gradient}, for any open set $V\subset Q$ we have that
\be
\abs{\B_2u_{\B_1}}(V)=\int_V u_{\B_1}[\B_2^\ast v_{\B_2}]\,dx\text{ and }\abs{\B_1u_{\B_1}}(V)=\int_V u_{\B_1}[\B_1^\ast v_{\B_1}]\,dx.
\ee
This, and together with \eqref{eq_c_difference}, we conclude
\be
-c\int_V u_{\B_1}[\B_2^\ast v_{\B_2}]\,dx\leq \int_V u_{\B_1}[\B_2^\ast v_{\B_2}]\,dx-\int_V u_{\B_1}[\B_1^\ast v_{\B_1}]\,dx
\leq c\int_V u_{\B_1}[\B_1^\ast v_{\B_1}]\,dx.
\ee
Thus, we could further write, by taking $Q(x,\delta):=[x-\delta,x+\delta]^N$, a cube centered at $x$ with side length $2\delta$, that 
\begin{align}
-c \fint_{Q(x,\delta)} u_{\B_1}[\B_2^\ast v_{\B_2}]\,dx&\leq \fint_{Q(x,\delta)}\fsp{ u_{\B_1}[\B_2^\ast v_{\B_2}]-u_{\B_1}[\B_1^\ast v_{\B_1}]}dx\\
& =\fint_{Q(x,\delta)} u_{\B_1}[\B_2^\ast v_{\B_2}]\,dx-\fint_{Q(x,\delta)} u_{\B_1}[\B_1^\ast v_{\B_1}]\,dx\\
&\leq c \fint_{Q(x,\delta)} u_{\B_1}[\B_1^\ast v_{\B_1}]\,dx
\end{align}
By Assertion 2, Theorem \ref{main_sub_diff_pvb}, we have $\B_1^\ast v_{\B_1}\in L^1(Q)$. Since $u_{\B_1}\in L^\infty(Q)$, we have $u_{\B_1}\B_1^\ast v_{\B_1}\in L^1(Q)$. Thus, we could apply the Lebesgue point in  Theorem \ref{lebesgue_point_thm} and take $\delta\to 0$ to conclude that 
\be
-c u_{\B_1}\fmp{\B_2^\ast v_{\B_2}}\leq u_{\B_1}[\B_2^\ast v_{\B_2}]-u_{\B_1}[\B_1^\ast v_{\B_1}]\leq cu_{\B_1}[\B_1^\ast v_{\B_1}],
\ee
for a.e. $x\in Q$. That is, we have
\be
-c u_{\B_1}\fmp{\B_2^\ast v_{\B_2}}\leq u_{\B_1}[\B_2^\ast v_{\B_2}-\B_1^\ast v_{\B_1}]\leq c u_{\B_1}\fmp{\B_1^\ast v_{\B_1}}, 
\ee
and together with the fact that $u_{\B_1}\geq1$ (see \eqref{restricted_reconstruction}), we deduce that 
\be\label{sub_gradient_xiangaixiangsha}
-c\fmp{\B_2^\ast v_{\B_2}}\leq [\B_2^\ast v_{\B_2}-\B_1^\ast v_{\B_1}]\leq c \fmp{\B_1^\ast v_{\B_1}},
\ee
for a.e. $x\in Q$.\\\\
On the other hand, again by \eqref{restricted_reconstruction}, we have that $-u_{\B_1}+2M_2>1$, and hence
\begin{align}\label{simi_subgradient_estimation1}
&\fjp{\partial PV_{\B_2}(u_{\B_1})-\partial PV_{\B_1}(u_{\B_1}),-u_{\B_2}}\\
&=\fjp{\partial PV_{\B_2}(u_{\B_1})-\partial PV_{\B_1}(u_{\B_1}),-u_{\B_2}+2M_2-2M_2}\\
&= \fjp{\partial PV_{\B_2}(u_{\B_1})-\partial PV_{\B_1}(u_{\B_1}),-u_{\B_2}+2M_2}\\
&\,\,\,\,\,\,\,\,\,\,+\fjp{\partial PV_{\B_2}(u_{\B_1})-\partial PV_{\B_1}(u_{\B_1}),-2M_2}\\
&\leq c \fjp{\fmp{\B_1^\ast v_{\B_1}},-u_{\B_2}+2M_2}+\fjp{\partial PV_{\B_2}(u_{\B_1})-\partial PV_{\B_1}(u_{\B_1}),-2M_2}.
\end{align}
Note that, as $v_{\B_1}\in W_0^p\fmp{\B_1}(Q)$, by Remark \ref{rmk_test_subgradient} we observe that
\be\label{simi_subgradient_estimation2}
\fjp{\B_1^\ast v_{\B_1},-u_{\B_2}+2M_2}\leq PV_{\B_1}(-u_{\B_2}+2M_2)=PV_{\B_1}(u_{\B_2}),
\ee
and, since the constants belongs to the kernel of $PV_{\B_2}$, 
\be\label{simi_subgradient_estimation3}
\fjp{\partial PV_{\B_2}(u_{\B_1})-\partial PV_{\B_1}(u_{\B_1}),-2M_2}=0.
\ee
Therefore, by combing \eqref{simi_subgradient_estimation1}, \eqref{simi_subgradient_estimation2}, and \eqref{simi_subgradient_estimation3}, we obtain that 
\be
\fjp{\partial PV_{\B_2}(u_{\B_1})-\partial PV_{\B_1}(u_{\B_1}),-u_{\B_2}}\leq c PV_{\B_1}(u_{\B_2}).
\ee
This, together with \eqref{p_var_est_1}, \eqref{monotone_sub_grad}, and \eqref{l2_beta_estimation3}, we conclude our thesis.
\end{proof}

\subsubsection{$L^2$-distance estimation of reconstructed image $u_{\alpha,\B}$}
We start with a relaxation result regarding to the corrupted image $u_\eta$.
\begin{proposition}\label{prop_approx_fiderly}
Let $u_\eta\in L^2(Q)$ be given. Let $\seqe{u_{\eta}^\e}\subset L^2(Q)$ such that $u_{\eta}^\e\to u_\eta$ strongly in $L^2$. For arbitrary $(\alpha,\B)\in\T$, define
\be\label{approx_fiderlity}
u^\e_{\alpha,\B}:=\argmin\flp{\norm{u-u_{\eta}^\e}_{L^2}^2+\alpha PV_\B(u):\,\, u\in L^1(Q)}.
\ee
Then we have 
\be\label{approx_fider_xiao1}
\norm{u_{\alpha,\B}-u_{\alpha,\B}^\e}_{L^2(Q)}\leq \norm{u_\eta^\e-u_\eta}_{L^2(Q)}
\ee
and 
\be\label{approx_fider_xiao2}
\lime PV_\B(u_{\alpha,\B}^\e)= PV_\B(u_{\alpha,\B}),
\ee
where $u_{\alpha,\B}$ is defined in \eqref{ABsolution_map}.
\end{proposition}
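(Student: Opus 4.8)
The plan is to exploit the $2$-strong convexity of the $L^2$-fidelity term, which is common to both minimization problems, and then to combine the lower semicontinuity of $PV_\B$ with a direct comparison of the two energies.

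For the first estimate, write $u:=u_{\alpha,\B}$ and $u^\e:=u_{\alpha,\B}^\e$, and let $F_\eta$ and $F_{\eta^\e}$ denote the functionals in \eqref{ABsolution_map} and \eqref{approx_fiderlity}. Since $u\mapsto \norm{u-u_\eta}_{L^2(Q)}^2$ is $2$-strongly convex and $PV_\B$ is convex, each functional is $2$-strongly convex, so that its unique minimizer $w$ (uniqueness by Proposition \ref{unique_exist_lower}) satisfies $F(v)\ge F(w)+\norm{v-w}_{L^2(Q)}^2$ for every $v$. Applying this to $F_{\eta^\e}$ with the competitor $v=u$ and to $F_\eta$ with $v=u^\e$, then adding the two inequalities, the $PV_\B$ contributions cancel and every surviving term is a squared $L^2$ norm; expanding them collapses the estimate to $\fjp{u_\eta-u_\eta^\e,u-u^\e}\ge \norm{u-u^\e}_{L^2(Q)}^2$, and \eqref{approx_fider_xiao1} follows by Cauchy--Schwarz. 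Equivalently, one may write the optimality conditions $u_\eta-u\in\tfrac{\alpha}{2}\partial PV_\B(u)$ and use the monotonicity of $\partial PV_\B$, exactly as in the proof of Proposition \ref{thm_estimation_pvb_l2}.

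For \eqref{approx_fider_xiao2} I would first note that \eqref{approx_fider_xiao1} together with $u_\eta^\e\to u_\eta$ in $L^2$ forces $u^\e\to u$ strongly in $L^2(Q)$. The lower bound $\liminf_{\e\to 0}PV_\B(u^\e)\ge PV_\B(u)$ comes from Assertion~1 of Proposition \ref{lower_semicontity}: along an arbitrary subsequence extract a further one converging a.e. (a bounded $L^2$ family on the finite-measure set $Q$ is locally uniformly integrable) and apply the semicontinuity there. For the matching upper bound, test the optimality of $u^\e$ for $F_{\eta^\e}$ against the competitor $u$:
\be
\alpha\, PV_\B(u^\e)\le \norm{u-u_\eta^\e}_{L^2(Q)}^2-\norm{u^\e-u_\eta^\e}_{L^2(Q)}^2+\alpha\, PV_\B(u).
\ee
As $\e\to 0$ both $\norm{u-u_\eta^\e}_{L^2(Q)}$ and $\norm{u^\e-u_\eta^\e}_{L^2(Q)}$ tend to $\norm{u-u_\eta}_{L^2(Q)}$, so the difference of the fidelity terms vanishes and, for $\alpha>0$, $\limsup_{\e\to 0}PV_\B(u^\e)\le PV_\B(u)$.

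The genuinely delicate point is reconciling the two one-sided estimates: lower semicontinuity only produces inequalities along a.e.-convergent subsequences, so I must state the lower bound for an arbitrary subsequence and then upgrade to a true limit by the usual subsequence-of-subsequence argument, using that the $\limsup$ bound holds for the whole family. I would also treat the degenerate value $\alpha=0$ separately, where $u=u_\eta$, $u^\e=u_\eta^\e$, and the claim must be read accordingly, since dividing by $\alpha$ is no longer admissible.
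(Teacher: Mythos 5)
Your proof is correct and follows essentially the same route as the paper: the first estimate comes from comparing the two optimality conditions so that the $PV_\B$ contributions cancel by convexity (your strong-convexity bookkeeping is just the integrated form of the paper's subdifferential-monotonicity computation, as you note yourself), and the second estimate combines the resulting strong $L^2$ convergence with Proposition \ref{lower_semicontity} for the liminf and the direct energy comparison against the competitor $u_{\alpha,\B}$ for the limsup. Your extra care with a.e.-convergent subsequences is not even needed, since strong $L^2$ convergence on the bounded set $Q$ already gives weak-$*$ convergence in $\mb(Q)$, so condition 1(ii) of Proposition \ref{lower_semicontity} applies to the full family; your separate treatment of $\alpha=0$ matches the paper's remark that there is nothing to prove in that case.
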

\begin{proof}
From the optimality condition of \eqref{approx_fiderlity} and \eqref{ABsolution_map}, we have 
\be
u_{\alpha,\B}-u_{\alpha,\B}^\e+u_\eta^\e-u_\e = \alpha\partial PV_\B(u_{\alpha,\B}^\e) -\alpha\partial PV_\B(u_{\alpha,\B}).
\ee
Multiplying $u_{\alpha,\B}-u_{\alpha,\B}^\e$ on the both hand side, we have
\begin{align}
&\norm{u_{\alpha,\B}-u_{\alpha,\B}^\e}_{L^2(Q)}^2+\fjp{u_\eta^\e-u_\e,u_{\alpha,\B}-u_{\alpha,\B}^\e} \\
&= \alpha\fjp{\partial PV_\B(u_{\alpha,\B}^\e) -\partial PV_\B(u_{\alpha,\B}),u_{\alpha,\B}-u_{\alpha,\B}^\e}\leq 0,
\end{align}
where at the last inequality we used the fact that $\partial PV_\B$ is a maximal monotone operator, and we conclude \eqref{approx_fider_xiao1} as desired.\\\\
We next claim \eqref{approx_fider_xiao2}. We assume that $\alpha\in\R^+$, otherwise there is nothing to prove. By \eqref{approx_fider_xiao1}, we have that
\be\label{l2strong_app_fidely}
u_{\alpha,\B}\to u_{\alpha,\B}^\e\text{ strongly in }L^2.
\ee
This, and together with Proposition \ref{lower_semicontity}, we deduce that 
\be\label{lower_semicontity_app_fid}
\liminf_{\e\to 0} PV_\B(u^\e_{\alpha,\B})\geq PV_\B(u_{\alpha,\B}).
\ee
On the other hand, in view of the optimality condition of \eqref{approx_fiderlity} again, we have
\be
\norm{u^\e_{\alpha,\B}-u_{\eta}^\e}_{L^2}^2+\alpha PV_\B(u^\e_{\alpha,\B})\leq \norm{u_{\alpha,\B}-u_{\eta}^\e}_{L^2}^2+\alpha PV_\B(u_{\alpha,\B}),
\ee
or
\be
\alpha PV_\B(u^\e_{\alpha,\B})\leq \norm{u_{\alpha,\B}-u_{\eta}^\e}_{L^2}^2-\norm{u^\e_{\alpha,\B}-u_{\eta}^\e}_{L^2}^2+\alpha PV_\B(u_{\alpha,\B}).
\ee
Hence, by \eqref{l2strong_app_fidely}, we have that 
\begin{align}
&\limsup_{\e\to 0}\alpha PV_\B(u^\e_{\alpha,\B})\\
&\leq \limsup_{\e\to 0}\fmp{\norm{u_{\alpha,\B}-u_{\eta}^\e}_{L^2}^2-\norm{u^\e_{\alpha,\B}-u_{\eta}^\e}_{L^2}^2}+ \alpha PV_\B(u_{\alpha,\B})\\
&=\alpha PV_\B(u_{\alpha,\B}).
\end{align}
This, and \eqref{lower_semicontity_app_fid}, allows us to conclude \eqref{approx_fider_xiao2} as desired.
\end{proof}
We next present an improved version of Proposition \ref{thm_estimation_pvb_l2}, in which we remove the assumption that $u_\eta$ need to satisfy  the boundness assumption \eqref{restricted_noisy_corrupted}.
\begin{corollary}\label{coro_make_in_alpha}
Let $u_\eta\in L^2(Q)$, $\alpha\in\R^+$, and $\B_1$, $\B_2\in\Sigma$ be given. Then the following estimation holds.
\be
\norm{u_{\alpha,\B_1}-u_{\alpha,\B_2}}^2_{L^2(Q)}\leq \alpha \cdot O\fsp{\abs{\B_1- \B_2}_{\ell^\infty}}\fmp{PV_{\B_1}(u_{\alpha,\B_1})+PV_{\B_1}(u_{\alpha,\B_2})},
\ee
where $u_{\alpha,\B}$ is defined in \eqref{ABsolution_map}.
\end{corollary}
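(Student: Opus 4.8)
The plan is to deduce the estimate for an arbitrary datum $u_\eta\in L^2(Q)$ from the bounded case already settled in Proposition \ref{thm_estimation_pvb_l2}, via an approximation argument resting on the stability result Proposition \ref{prop_approx_fiderly}. Two preliminary reductions make this possible. First, I would retain the parameter $\alpha$ throughout the computation in the proof of Proposition \ref{thm_estimation_pvb_l2}: the optimality conditions now read $u_{\alpha,\B_i}-u_\eta=-\alpha\,\partial PV_{\B_i}(u_{\alpha,\B_i})$, so after subtracting and testing against $u_{\alpha,\B_1}-u_{\alpha,\B_2}$ the factor $\alpha$ multiplies the whole inner product $\fjp{\partial PV_{\B_2}(u_{\alpha,\B_2})-\partial PV_{\B_1}(u_{\alpha,\B_1}),\,u_{\alpha,\B_1}-u_{\alpha,\B_2}}$. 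Since the subsequent bound of this inner product by $O\fsp{\abs{\B_1-\B_2}_{\ell^\infty}}\fmp{PV_{\B_1}(u_{\alpha,\B_1})+PV_{\B_1}(u_{\alpha,\B_2})}$ is independent of $\alpha$, and \eqref{restricted_reconstruction} holds for every $\alpha>0$, the estimate acquires exactly the prefactor $\alpha$ demanded by the statement. Second, both sides of the claimed inequality are invariant under replacing $u_\eta$ by $u_\eta+s$ for a constant $s$: since constants are annihilated by each $\B$ and thus lie in the kernel of $PV_\B$, the minimizer associated with $u_\eta+s$ equals $u_{\alpha,\B}+s$, whence the difference $u_{\alpha,\B_1}-u_{\alpha,\B_2}$ and each quantity $PV_{\B_1}(u_{\alpha,\B_j})$ are unchanged. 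Being expressed entirely in shift-invariant quantities, the bounded-data estimate therefore holds for \emph{every} bounded datum, not merely for those satisfying the positivity of \eqref{restricted_noisy_corrupted}.

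With these reductions in hand, I would approximate. Given $u_\eta\in L^2(Q)$, set $u_\eta^\e:=(u_\eta\wedge\tfrac1\e)\vee(-\tfrac1\e)$; these are bounded, satisfy $\abs{u_\eta^\e}\leq\abs{u_\eta}$, and $u_\eta^\e\to u_\eta$ in $L^2(Q)$ by dominated convergence. Applying the $\alpha$-tracked, shift-invariant estimate to each $u_\eta^\e$ yields
\be
\norm{u_{\alpha,\B_1}^\e-u_{\alpha,\B_2}^\e}_{L^2(Q)}^2\leq \alpha\,O\fsp{\abs{\B_1-\B_2}_{\ell^\infty}}\fmp{PV_{\B_1}(u_{\alpha,\B_1}^\e)+PV_{\B_1}(u_{\alpha,\B_2}^\e)},
\ee
where $u_{\alpha,\B_i}^\e$ is the reconstruction \eqref{approx_fiderlity} associated with $u_\eta^\e$. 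By Proposition \ref{prop_approx_fiderly} we have $u_{\alpha,\B_i}^\e\to u_{\alpha,\B_i}$ strongly in $L^2(Q)$, so the left-hand side converges to $\norm{u_{\alpha,\B_1}-u_{\alpha,\B_2}}_{L^2(Q)}^2$; the same proposition gives $PV_{\B_i}(u_{\alpha,\B_i}^\e)\to PV_{\B_i}(u_{\alpha,\B_i})$ for the \emph{matching} operator, which disposes of the term $PV_{\B_1}(u_{\alpha,\B_1}^\e)$.

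The main obstacle is the cross term $PV_{\B_1}(u_{\alpha,\B_2}^\e)$, in which the operator $\B_1$ does not match the operator $\B_2$ defining the reconstruction: here Proposition \ref{prop_approx_fiderly} supplies only the matching convergence $PV_{\B_2}(u_{\alpha,\B_2}^\e)\to PV_{\B_2}(u_{\alpha,\B_2})$, while lower semicontinuity (Proposition \ref{lower_semicontity}) gives only the reverse, hence useless, inequality. I would resolve this using the continuity hypotheses \eqref{cont_sigma_PV1}--\eqref{cont_sigma_PV2}, which yield $PV_{\B_1}(u_{\alpha,\B_2}^\e)\leq\fsp{1+O\fsp{\abs{\B_1-\B_2}_{\ell^\infty}}}PV_{\B_2}(u_{\alpha,\B_2}^\e)$; passing to the limit and then converting $PV_{\B_2}(u_{\alpha,\B_2})$ back to $PV_{\B_1}(u_{\alpha,\B_2})$ by the same comparison, the bounded factor $\fsp{1+O\fsp{\abs{\B_1-\B_2}_{\ell^\infty}}}^2$ is absorbed into $O\fsp{\abs{\B_1-\B_2}_{\ell^\infty}}$. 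Taking $\liminf_{\e\to0}$ on the left together with this upper bound on the right then delivers the asserted inequality.
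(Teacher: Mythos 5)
Your proposal follows essentially the same route as the paper's proof: truncate the datum to a bounded one, exploit the shift-invariance of the minimization (the paper's ``$+2M$'' trick) to reduce to the positive bounded case of Proposition \ref{thm_estimation_pvb_l2} with the factor $\alpha$ tracked through the optimality conditions, and pass to the limit via Proposition \ref{prop_approx_fiderly}. Your explicit handling of the mismatched cross term $PV_{\B_1}(u^\e_{\alpha,\B_2})$ through the continuity assumptions \eqref{cont_sigma_PV1}--\eqref{cont_sigma_PV2} fills in a step the paper leaves implicit in its final limit passage, which is a welcome refinement rather than a different argument.
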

\begin{proof}
Let $M\in\N$ be given, and define 
\be
u_\eta^M:=-M\wedge u_\eta\vee M.
\ee
Also, we define that
\be\label{opti_K_KK1}
u_{\alpha,\B}^M:=\argmin\flp{\norm{u-u_{\eta}^M}_{L^2}^2+\alpha PV_\B(u):\,\, u\in L^1(Q)}
\ee
and 
\be\label{opti_K_KK2}
\bar u_{\alpha,\B}^M:=\argmin\flp{\norm{u-(u_{\eta}^M+2M)}_{L^2}^2+\alpha PV_\B(u):\,\, u\in L^1(Q)}.
\ee
We claim that 
\be\label{constant_shifting}
\bar u_{\alpha,\B}^M = u_{\alpha,\B}^M+2M.
\ee
We observe that
\begin{align}
&\norm{\bar u_{\alpha,\B}^M-\fsp{u_{\eta}^M+2M}}_{L^2}^2+\alpha PV_\B\fsp{\bar u_{\alpha,\B}^M}\\
&\leq \norm{ u_{\alpha,\B}^M+2M-\fsp{u_{\eta}^M+2M}}_{L^2}^2+\alpha PV_\B\fsp{ u_{\alpha,\B}^M+2M}\\ &=  \norm{ u_{\alpha,\B}^M-u_{\eta}^M}_{L^2}^2+\alpha PV_\B\fsp{ u_{\alpha,\B}^M}\\
&\leq \norm{\bar u_{\alpha,\B}^M-2M-u_{\eta}^M}_{L^2}^2+\alpha PV_\B\fsp{\bar u_{\alpha,\B}^M-2M}\\
&=\norm{\bar u_{\alpha,\B}^M-\fsp{u_{\eta}^M+2M}}_{L^2}^2+\alpha PV_\B\fsp{\bar u_{\alpha,\B}^M},
\end{align}
where at the first inequality we used the optimality condition on \eqref{opti_K_KK2}, and at the last inequality we used the optimality condition on \eqref{opti_K_KK1}. Thus, we have
\begin{align}
&\norm{\bar u_{\alpha,\B}^M-(u_{\eta}^M+2M)}_{L^2}^2+\alpha PV_\B(\bar u_{\alpha,\B}^M)\\
&= \norm{ u_{\alpha,\B}^M+2M-(u_{\eta}^M+2M)}_{L^2}^2+\alpha PV_\B( u_{\alpha,\B}^M+2M),
\end{align}
and we conclude \eqref{constant_shifting} in view of the uniqueness of the minimizer. Thus, we have 
\be
\norm{u^M_{\alpha,\B_1}-u^M_{\alpha,\B_2}}_{L^2(Q)} = \norm{\bar u^M_{\alpha,\B_1}-\bar u^M_{\alpha,\B_2}}_{L^2(Q)}.
\ee
Therefore, we could assume that, without lose of generality, $u_\eta^M\geq M>0$. In another word, we have $u_\eta^M$ satisfies \eqref{restricted_noisy_corrupted}.\\\\
Next, by the optimality condition of \eqref{opti_K_KK1}, we have that
\be
\frac1\alpha(u^M_{\alpha,\B_1}-u_\eta)=- \partial PV_{\B_1}(u^M_{\alpha,\B_1})\text{ and }\frac1\alpha(u^M_{\alpha,\B_2}-u_\eta)=-\partial PV_{\B_2}(u^M_{\alpha,\B_2}).
\ee
Following exactly the same argument used in Proposition \ref{thm_estimation_pvb_l2} (in \eqref{simi_subgradient_estimation1} we use $2M$ instead of $M_2$), we obtain that 
\be
\frac1\alpha\norm{u^M_{\alpha,\B_1}-u^M_{\alpha,\B_2}}^2_{L^2(Q)}\leq O\fsp{\abs{\B_1- \B_2}_{\ell^\infty}}\fmp{PV_{\B_1}(u^M_{\alpha,\B_1})+PV_{\B_2}(u^M_{\alpha,\B_1})}.
\ee
In the end, we compute that 
\begin{align}
&\frac1\alpha\norm{u_{\alpha,\B_1}-u_{\alpha,\B_2}}^2_{L^2(Q)}\\
&\leq \frac1\alpha\norm{u^M_{\alpha,\B_1}-u^M_{\alpha,\B_2}}^2_{L^2(Q)} + \frac1\alpha\norm{u_{\alpha,\B_1}-u^M_{\alpha,\B_1}}_{L^2(Q)}^2+\norm{u_{\alpha,\B_2}-u^M_{\alpha,\B_2}}_{L^2(Q)}^2\\
&\leq O\fsp{\abs{\B_1- \B_2}_{\ell^\infty}}\fmp{PV_{\B_1}(u^M_{\alpha,\B_1})+PV_{\B_2}(u^M_{\alpha,\B_1})}\\
&\,\,\,\,\,\,+ \frac1\alpha\norm{u_{\alpha,\B_1}-u^M_{\alpha,\B_1}}_{L^2(Q)}^2+\norm{u_{\alpha,\B_2}-u^M_{\alpha,\B_2}}_{L^2(Q)}^2,
\end{align}
Then, by Proposition \ref{prop_approx_fiderly}, in which $u^\e_\eta$ is replaced by $u^M_\eta$, we conclude our thesis by sending $M\nearrow +\infty$ on the right hand side on the above inequality.
\end{proof}

\begin{proposition}\label{alpha_12_B_12}
Let $(\alpha_1,\B_1)$ and $(\alpha_2,\B_2)\in\mathbb T$ be given. Then we have
\begin{align}
&\norm{u_{\alpha_1,\B_1}-u_{\alpha_2,\B_2}}_{L^2(Q)}^2\\
&\leq 4 \fmp{\alpha_1O\fsp{\abs{\B_1- \B_2}_{\ell^\infty}}+\abs{\alpha_1-\alpha_2}}\fmp{PV_{\B_1}(u_{\alpha_1,\B_1})+PV_{\B_2}(u_{\alpha_1,\B_2})}.
\end{align}
\end{proposition}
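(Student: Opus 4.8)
The plan is to reduce the joint perturbation of both arguments to the two one-parameter estimates already in hand. First I would introduce the intermediate reconstruction $u_{\alpha_1,\B_2}$ and apply the elementary bound $\norm{a-b}_{L^2(Q)}^2\le 2\norm{a-c}_{L^2(Q)}^2+2\norm{c-b}_{L^2(Q)}^2$ with $a=u_{\alpha_1,\B_1}$, $b=u_{\alpha_2,\B_2}$, and $c=u_{\alpha_1,\B_2}$. This splits the left-hand side into a pure operator variation $\norm{u_{\alpha_1,\B_1}-u_{\alpha_1,\B_2}}_{L^2(Q)}^2$ at the fixed intensity $\alpha_1$, and a pure intensity variation $\norm{u_{\alpha_1,\B_2}-u_{\alpha_2,\B_2}}_{L^2(Q)}^2$ at the fixed operator $\B_2$.

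For the operator-variation term I would invoke Corollary \ref{coro_make_in_alpha} with the common intensity $\alpha_1$, obtaining a bound of the form $\alpha_1\,O\fsp{\abs{\B_1-\B_2}_{\ell^\infty}}\fmp{PV_{\B_1}(u_{\alpha_1,\B_1})+PV_{\B_1}(u_{\alpha_1,\B_2})}$. For the intensity-variation term I would use the Lipschitz-in-$\alpha$ estimate established earlier in Section \ref{fixed_B_operator}, applied to the fixed operator $\B_2$, which gives $\abs{\alpha_1-\alpha_2}\fmp{PV_{\B_2}(u_{\alpha_1,\B_2})+PV_{\B_2}(u_{\alpha_2,\B_2})}$.

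It then remains to re-express the four seminorm contributions in terms of the two canonical quantities $PV_{\B_1}(u_{\alpha_1,\B_1})$ and $PV_{\B_2}(u_{\alpha_1,\B_2})$ that appear on the right-hand side. The mismatched term $PV_{\B_1}(u_{\alpha_1,\B_2})$ I would replace by $PV_{\B_2}(u_{\alpha_1,\B_2})$ at the cost of a factor $1+O\fsp{\abs{\B_1-\B_2}_{\ell^\infty}}$, using the continuity assumption \eqref{cont_sigma_PV1}--\eqref{cont_sigma_PV2}; and the off-pivot term $PV_{\B_2}(u_{\alpha_2,\B_2})$ I would dominate by $PV_{\B_2}(u_{\alpha_1,\B_2})$ by means of the monotonicity of $\alpha\mapsto PV_{\B_2}(u_{\alpha,\B_2})$ from Assertion 1 of Proposition \ref{g_alpha_auxiliary}. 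The prefactor $4$ is then exactly what is needed to absorb both the doubling coming from the triangle inequality and the $1+O(\cdot)\le 2$ adjustments.

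I expect the last reduction to be the only genuinely delicate step. Swapping the operator index through the continuity assumption is routine, but dominating $PV_{\B_2}(u_{\alpha_2,\B_2})$ --- the competitor seminorm evaluated at the off-pivot intensity $\alpha_2$ --- by the pivot quantity $PV_{\B_2}(u_{\alpha_1,\B_2})$ forces the monotone inequality to point in the favorable direction; concretely, it is clean precisely when $\alpha_1\le\alpha_2$, so that $PV_{\B_2}(u_{\alpha_2,\B_2})\le PV_{\B_2}(u_{\alpha_1,\B_2})$. This is the reason the estimate is stated asymmetrically, with the coefficient $\alpha_1$ and both seminorms anchored at $\alpha_1$: arranging the comparison so that $\alpha_1$ plays this distinguished role is what lets the argument land on the stated right-hand side rather than on an expression involving the a priori uncontrolled quantity $PV_{\B_2}(u_{\alpha_2,\B_2})$.
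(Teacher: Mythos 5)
Your proposal follows essentially the same route as the paper's proof: the same intermediate reconstruction $u_{\alpha_1,\B_2}$, the same factor-$2$ triangle inequality, Corollary \ref{coro_make_in_alpha} for the operator variation, the Lipschitz-in-$\alpha$ estimate for the intensity variation, and the same final bookkeeping via the continuity assumption \eqref{cont_sigma_PV2} and the monotonicity of $\alpha\mapsto PV(u_\alpha)$. The one delicate point you single out --- that dominating $PV_{\B_2}(u_{\alpha_2,\B_2})$ by $PV_{\B_2}(u_{\alpha_1,\B_2})$ needs $\alpha_1\le\alpha_2$ --- is handled no more carefully in the paper, which silently makes the same replacement, so your argument is faithful to (and slightly more self-aware than) the original.
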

\begin{proof}
We compute that 
\begin{align}\label{l2_B12_1st}
&\norm{u_{\alpha_1,\B_1}-u_{\alpha_2,\B_2}}^2_{L^2(Q)}\leq 2\norm{u_{\alpha_1,\B_1}-u_{\alpha_1,\B_2}}^2_{L^2(Q)}+2\norm{u_{\alpha_1,\B_2}-u_{\alpha_2,\B_2}}^2_{L^2(Q)}\\
&\leq 2\alpha_1O\fsp{\abs{\B_1- \B_2}_{\ell^\infty}}\fmp{PV_{\B_1}(u_{\alpha_1,\B_1})+PV_{\B_1}(u_{\alpha_1,\B_2})}+4\abs{\alpha_1-\alpha_2}PV_{\B_2}(u_{\alpha_1,\B_2})\\
&\leq \fmp{2\alpha_1O\fsp{\abs{\B_1- \B_2}_{\ell^\infty}}+4\abs{\alpha_1-\alpha_2}}\fmp{PV_{\B_1}(u_{\alpha_1,\B_1})+PV_{\B_1}(u_{\alpha_1,\B_2})}.
\end{align}
Moreover, from \eqref{cont_sigma_PV2}, we have
\be
\abs{PV_{\B_1}(u_{\alpha_1,\B_2})-PV_{\B_2}(u_{\alpha_1,\B_2})}\leq O\fsp{\abs{\B_1- \B_2}_{\ell^\infty}}PV_{\B_2}(u_{\alpha_1,\B_2}).
\ee
Together with \eqref{l2_B12_1st}, we deduce that
\begin{align}
&\norm{u_{\alpha_1,\B_1}-u_{\alpha_2,\B_2}}^2_{L^2(Q)}\\
&\leq 4 \fmp{2\alpha_1O\fsp{\abs{\B_1- \B_2}_{\ell^\infty}}+\abs{\alpha_1-\alpha_2}}\fmp{PV_{\B_1}(u_{\alpha_1,\B_1})+PV_{\B_2}(u_{\alpha_1,\B_2})}
\end{align}
as desired.
\end{proof}

We close this section by proving Theorem \ref{PV_finiteapprox_result}
\begin{proof}[Proof of Theorem \ref{PV_finiteapprox_result}]
The Assertion 1 is the direct result of Theorem \ref{thm_Gamma_conv}.\\\\
We next claim \eqref{tgv_cost_error}. We assume that $u\in C^\infty(\bar Q)$ for a moment. Indeed, for any $(\alpha, \B)\in \mathbb T$, we could extract a sequence $\seql{(\alpha_l,\B_l)}\subset \mathbb T$, where for each $l\in\N$, $(\alpha_l,\B_l)\in\mathbb T_l$, such that $(\alpha_l,\B_l)\to (\alpha, \B)$. We observe that, by  Proposition \ref{alpha_12_B_12},
\begin{align}\label{two_step_estimation}
&\abs{\CC(\alpha_l, \B_l)- \CC(\alpha, \B)}\\
&= \abs{\norm{u_{\alpha_l, \B_l}-u_c}_{L^2(Q)}-\norm{u_{\alpha, \B}-u_c}_{L^2(Q)}}
\leq \norm{u_{\alpha_l, \B_l}-u_{\alpha, \B}}_{L^2(Q)}\\
&\leq2 \fmp{\alpha_l2O\fsp{\abs{\B_1- \B_2}_{\ell^\infty}}+\abs{\alpha_l-\alpha}}^{1/2}\fmp{PV_{\B_l}(u_{\alpha_l,\B_l})+PV_{\B}(u_{\alpha_l,\B})}^{1/2}\\
&\leq 4K \fmp{\alpha_l2O\fsp{\abs{\B_l- \B}_{\ell^\infty}}+\abs{\alpha_l-\alpha}}^{1/2}\norm{u_\eta}_{W^{d,1}(Q)}^{1/2}.
\end{align}
Next, take any optimal solution $(\alpha_\T,\B_\T)$ from \eqref{tgv_soln_error} and by Assertion 1 we could obtain a sequence $\seql{(\alpha_{\T_l},\B_{\T_l})}$, where, at each step $l\in\N$, $(\alpha_{\T_l},\B_{\T_l})\in\T_l$ is determined in \eqref{tgv_soln_error}, such that 
\be
(\alpha_{\T_l},\B_{\T_l})\to (\alpha_\T,\B_\T).
\ee
Also, at each step $l\in\N$, we find the grid $\mathbb G_l(i_l,j_l)$ be such that
\be\label{where_the_true_opt}
(\alpha_\T,\B_\T)\in \mathbb G_l(i_l,j_l)
\ee
where the grid $\mathbb G_l(i_l,j_l)$ is defined in \eqref{eq_finite_grid_one}. Then, in view of \eqref{two_step_estimation}, we have that
\begin{align}\label{grid_estimation_one}
&\max\flp{\CC(\alpha, \B):\,\,(\alpha, \B)\in \mathbb G_l(i_l,j_l)} -\min\flp{\CC(\alpha, \B):\,\,(\alpha, \B)\in \mathbb G_l(i_l,j_l)}\\
&\leq 4KP \fmp{O\fsp{P/l}+1/l}^{1/2}\norm{u_\eta}_{W^{d,1}(Q)}^{1/2}.
\end{align}
We divide into two cases.\\\\
\underline{Case 1:} Assume at step $l$ that $(\alpha_{\T_l},\B_{\T_l})\in \mathbb G_l(i_l,j_l)$. In this case we could directly deduce that
\begin{align}
&\CC(\alpha_{\T_l},\B_{\T_l})-\CC(\alpha_\T, \B_\T)\leq \max\flp{\CC(\alpha, \B):\,\,(\alpha, \B)\in \mathbb G_l(i_l,j_l)} - \CC(\alpha_{\T_l},\B_{\T_l})\\
&\leq \max\flp{\CC(\alpha, \B):\,\,(\alpha, \B)\in \mathbb G_l(i_l,j_l)} - \min\flp{\CC(\alpha, \B):\,\,(\alpha, \B)\in \mathbb G_l(i_l,j_l)};
\end{align}
\underline{Case 2:} Assume at step $l$ that $(\alpha_{\T_l},\B_{\T_l})\notin \mathbb G_l(i_l,j_l)$. In this case, however, in view of the definition of $(\alpha_{\T_l},\B_{\T_l})$, we must have
\be\label{anyway_minimizer}
\max\flp{\CC(\alpha, \B):\,\,(\alpha, \B)\in \mathbb G_l(i_l,j_l)\cap\mathbb T_l}\geq \CC(\alpha_{\T_l},\B_{\T_l}).
\ee
Since if not, $(\alpha_{\T_l},\B_{\T_l})$ would not be a global minimizer among $\mathbb T_l$, which is a contradiction. Therefore, by \eqref{anyway_minimizer} we again have 
\begin{align}
&\CC(\alpha_{\T_l},\B_{\T_l})-\CC(\alpha_\T, \B_\T)\leq\max\flp{\CC(\alpha, \B):\,\,(\alpha, \B)\in \mathbb G_l(i_l,j_l)\cap\mathbb T_l}- \CC(\alpha_\T,\B_\T)\\
&\leq \max\flp{\CC(\alpha, \B):\,\,(\alpha, \B)\in \mathbb G_l(i_l,j_l)} - \min\flp{\CC(\alpha, \B):\,\,(\alpha, \B)\in \mathbb G_l(i_l,j_l)},
\end{align}
where at the last inequality we used the assumption \eqref{where_the_true_opt}. In the end, in view of those two cases discussed above and estimation \eqref{grid_estimation_one}, we observe that 
\begin{align}\label{smooth_result_tv}
&\CC(\alpha_{\T_l},\B_{\T_l})-\CC(\alpha_\T,\B_\T)\\
&\leq \max\flp{\CC(\alpha, \B):\,\,(\alpha, \B)\in\partial \mathbb G_l(i_l,j_l)} -  \min\flp{\CC(\alpha, \B):\,\,(\alpha, \B)\in \mathbb G_l(i_l,j_l)}\\
&\leq 4KP \fmp{O\fsp{P/l}+1/l}^{1/2}\norm{u_\eta}_{W^{d,1}(Q)}^{1/2}
\end{align}
and hence the thesis.\\\\
Now we remove the assumption that $u_\eta\in C^\infty(\bar Q)$. Let $u_\eta^\e\in C^\infty(\bar Q)$ be defined as in Case 2 in the argument used to prove Theorem \ref{main_thm}. Define
\be
u_{\alpha,\B}^\e:=\argmin\flp{\norm{u-u_\eta^\e}_{L^2(Q)}^2+\alpha PV_\B(u):\,\,u\in L^1(Q)}.
\ee
Then by Proposition \ref{prop_approx_fiderly} we have that 
\be
\norm{u_{\alpha,\B}-u_{\alpha,\B}^\e}_{L^2(Q)}\leq \norm{u_\eta^\e-u_\e}_{L^2(Q)},
\ee
for arbitrary $(\alpha,\B)\in \mathbb T$. Then, for any $\delta>0$ be fixed, we could choose $\e>0$ small enough such that 
\be
\norm{u_\eta^\e-u_\e}_{L^2(Q)}<\delta/4\text{ and } \norm{u_\eta^\e}_{W^{d,1}(Q)}\leq \norm{u_\eta}_{L^1(Q)}/\delta^d.
\ee
This, and together with \eqref{smooth_result_tv}, we conclude that 
\begin{align}
\CC(\alpha_{\T_l},\B_{\T_l})-\CC(\alpha_\T,\B_\T)
&\leq 4KP \fmp{O\fsp{P/l}+1/l}^{1/2}\norm{u_\eta}_{W^{d,1}(Q)}^{1/2}+\delta/2\\
&\leq 4KP \fmp{O\fsp{P/l}+1/l}^{1/2}\norm{u_\eta}_{W^{d,1}(Q)}^{1/2}/\delta^d+\delta/2
\end{align}
as desired.
\end{proof}

\subsection{Examples of Training ground}
In this section we give some examples of collection $\Sigma$ that satisfies Assumption \ref{kappa_training_ground}. Recall the structure of operator $\B$ from Notation \ref{operator_B}.
\subsubsection{Operator $\B$ with invertible matrix}
Let $P\in\R^+$ used in Assumption \ref{kappa_training_ground} be given. We define the collection $\Sigma_P$ by
\be\label{sigma_P_define}
\Sigma_P:=\flp{\B:\,\,\abs{(B^h)^{-1}}\leq P,\text{ for each }1\leq h\leq d}.
\ee
We define the $h$-order total variation, say $TV^h$, of $u$ by
\be
PV^d(u) = \abs{H^hu}_{\mb(Q;\mathbb M^{N^h})}.
\ee
where $H^h$ is the $h$-order Hessian operator defined in Notation \ref{operator_B}. We also define the space $BV^d(Q)$ by
\be
BV^d(Q):=\flp{u\in L^1(Q):\,\, TV^d(u)<+\infty},
\ee
with norm
\be
\norm{u}_{BV^d(Q)}:=\norm{u}_{L^1(Q)}+ TV^d(u).
\ee
\begin{proposition}
Let $\B\in \Sigma_P$ be given. Then the space $BV_\B(Q)$ is equivalent to the space $BV^d(Q)$.
\end{proposition}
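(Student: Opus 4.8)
The plan is to deduce the equivalence of the two spaces from a pair of two-sided norm estimates, using crucially that each coefficient matrix $B^h$ of $\B\in\Sigma_P$ is invertible with $\abs{(B^h)^{-1}}\leq P$, while $\abs{B^h}_{\ell^\infty}\leq 1$ because $\B\in\Sigma$. Recall from Notation \ref{operator_B} that $\B u$ is the $\R^K$-valued distribution whose $h$-th block is $B^h(H^h u)$, so that $PV_\B(u)=\abs{\B u}_{\mb(Q;\rk)}$ decouples, up to the choice of norm on $\R^K$, into the blocks $\abs{B^h (H^h u)}_{\mb}$. The first step is the pointwise comparison: for every $\xi\in\R^{N^h}$ one has $\tfrac1{C_N P}\abs{\xi}\leq\abs{B^h\xi}\leq C_N\abs{\xi}$, with $C_N$ a purely dimensional constant arising from comparing the $\ell^\infty$ matrix norm to the Euclidean operator norm. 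Applying this to the polar decomposition of the measure $H^h u$ gives $\tfrac1{C_N P}\abs{H^h u}_{\mb}\leq\abs{B^h(H^h u)}_{\mb}\leq C_N\abs{H^h u}_{\mb}$, and summing over $h\leq d$ yields
\[
\tfrac{1}{C_N P}\sum_{h\leq d}\abs{H^h u}_{\mb}\leq PV_\B(u)\leq C\sum_{h\leq d}\abs{H^h u}_{\mb}.
\]
Thus $BV_\B(Q)$ coincides with $\flp{u\in L^1(Q):\,H^h u\in\mb(Q;\M^{N^h}),\ 1\leq h\leq d}$, normed equivalently by $\norm{u}_{L^1}+\sum_{h\leq d}\abs{H^h u}_{\mb}$.

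It remains to compare this mixed-order space with the top-order space $BV^d(Q)$, normed by $\norm{u}_{L^1}+\abs{H^d u}_{\mb}$. The inclusion $BV_\B(Q)\hookrightarrow BV^d(Q)$ is immediate, since the previous display bounds $\abs{H^d u}_{\mb}$ by $C_N P\,PV_\B(u)$. For the reverse inclusion I would show that the top-order measure $H^d u$ together with $u\in L^1(Q)$ already controls every intermediate derivative $H^h u$ with $0<h<d$. The tool is the Gagliardo--Nirenberg interpolation inequality, which for smooth $w$ on $Q$ reads
\[
\norm{H^h w}_{L^1(Q)}\leq C\fsp{\norm{H^d w}_{L^1(Q)}^{h/d}\norm{w}_{L^1(Q)}^{1-h/d}+\norm{w}_{L^1(Q)}},\qquad 0<h<d,
\]
where the additive lower-order term accounts for $w$ not vanishing on $\partial Q$.

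To transfer this bound to a function $u$ whose top derivative is merely a measure, I would mollify, setting $u_\e:=u\ast\eta_\e$ on compactly contained subcubes; then $H^d u_\e=(H^d u)\ast\eta_\e$ satisfies $\norm{H^d u_\e}_{L^1}\leq\abs{H^d u}_{\mb}$ while $\norm{u_\e}_{L^1}\leq\norm{u}_{L^1}$, so the interpolation inequality furnishes a bound on $\norm{H^h u_\e}_{L^1}$ that is uniform in $\e$. Since $H^h u_\e\to H^h u$ in $\mathcal D'(Q)$, the lower semicontinuity of the total variation (Proposition \ref{lower_semicontity}) yields $\abs{H^h u}_{\mb}\leq C\fsp{\abs{H^d u}_{\mb}^{h/d}\norm{u}_{L^1}^{1-h/d}+\norm{u}_{L^1}}<+\infty$ for each $h<d$; letting the subcubes exhaust $Q$ and summing over $h$ gives $\norm{u}_{BV_\B(Q)}\leq C'\norm{u}_{BV^d(Q)}$, and the equivalence follows. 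I expect the interpolation step to be the main obstacle: the high-order endpoint is a measure rather than an $L^1$ function, so Gagliardo--Nirenberg cannot be invoked directly and must be routed through the smoothing-and-lower-semicontinuity argument above, with the boundary of $Q$ handled either by the additive term displayed or by an extension across $\partial Q$.
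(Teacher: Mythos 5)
Your proposal is correct and follows essentially the same route as the paper: a two-sided comparison of $PV_\B(u)$ with $\sum_{h\leq d}\abs{H^h u}_{\mb}$ using $\abs{B^h}_{\ell^\infty}\leq 1$ and the invertibility bound $\abs{(B^h)^{-1}}\leq P$, followed by interpolation to control the intermediate derivatives by $\norm{u}_{L^1}$ and $TV^d(u)$ (the paper compresses this into the phrase ``Sobolev inequality''). Your treatment of the interpolation step via mollification and lower semicontinuity is in fact more careful than the paper's, which states the estimate only for smooth $u$ and does not address the measure-valued endpoint.
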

\begin{proof}
Without lose of generality we assume that $u\in BV^d(Q)\cap C^\infty(Q)$, and in view of the structure of operator $\B$, we have
\begin{align}\label{equivlanet_Pd}
&PV_\B(u)=\sum_{h=1}^d \abs{B^h H^hu}dx\leq \sum_{h=1}^d\abs{B^h}\abs{H^hu}dx\\
&\leq \sum_{h=1}^dTV^h(u)\leq  C\fsp{\norm{u}_{L^1(Q)}+ TV^d(u)},
\end{align}
where at the last inequality we used the Sobolev inequality.\\\\
On the other hand, we have
\be
TV^d(u) = \int_Q\abs{H^du}dx = \int_Q\abs{(B^d)^{-1}B^dH^du}dx \leq \abs{(B^d)^{-1}}_{\ell^\infty}\int_Q\abs{B^d H^du}dx\leq PV_\B(u).
\ee
This, and together with \eqref{equivlanet_Pd}, we are done.
\end{proof}

In the following proposition we show that $\Sigma_P$ satisfies Assumption \ref{kappa_training_ground}, Assertion 2.
\begin{proposition}\label{b1b2_l2different}
Let $\B_1$ and $\B_2\in\Sigma_P$, and $u\in BV^d(Q)$ be given. Then we have
\be\label{use_b1}
\abs{PV_{\B_1}(u)-PV_{\B_2}(u)}\leq \fmp{\sqrt{K}\abs{\B_1- \B_2}_{\ell^\infty}\sum_{h\leq d}\abs{(B^h_1)^{-1}}_{\ell^\infty}}PV_{\B_1}(u),
\ee
and
\be\label{use_b2}
\abs{PV_{\B_1}(u)-PV_{\B_2}(u)}\leq \fmp{\sqrt{K}\abs{\B_1- \B_2}_{\ell^\infty}\sum_{h\leq d}\abs{(B^h_2)^{-1}}_{\ell^\infty}}PV_{\B_2}(u),
\ee
where $K$ is defined in \eqref{A_quasiconvexity_operator}.
\end{proposition}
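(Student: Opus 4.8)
The plan is to reduce to smooth $u$ and then estimate the integrand $\abs{\B_1 u-\B_2 u}$ pointwise, recycling the inverse matrices $(B^h_i)^{-1}$ supplied by the definition of $\Sigma_P$. First I would apply Corollary \ref{smooth_strict_approximation_finite} to the finite family $\flp{\B_1,\B_2}$ to produce a sequence $\seqn{u_n}\subset C^\infty(Q)\cap BV_{\B_1}(Q)\cap BV_{\B_2}(Q)$ with $u_n\to u$ in $L^1(Q)$ and, \emph{simultaneously}, $PV_{\B_1}(u_n)\to PV_{\B_1}(u)$ and $PV_{\B_2}(u_n)\to PV_{\B_2}(u)$. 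Since every term appearing in \eqref{use_b1} and \eqref{use_b2} is continuous along this convergence, it suffices to prove both inequalities for $u\in C^\infty(Q)$. For such $u$, moving $\B^\ast$ onto $\B$ by integration by parts in the supremum defining $PV_\B$ (the boundary terms vanish as $\vp\in C_c^\infty(Q;\rk)$) yields the representation $PV_{\B_i}(u)=\int_Q\abs{\B_i u}\,dx$.

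From the reverse triangle inequality $\big|\,\abs{\B_1 u}-\abs{\B_2 u}\,\big|\leq\abs{\B_1 u-\B_2 u}$ I obtain
\be
\abs{PV_{\B_1}(u)-PV_{\B_2}(u)}\leq \int_Q\abs{\B_1 u-\B_2 u}\,dx.
\ee
Recalling $\B_i u=\sum_{h\leq d}B^h_i(H^h u)$ and inserting the identity $H^h u=(B^h_1)^{-1}\big(B^h_1 H^h u\big)$ block by block, I would rewrite
\be
\B_1 u-\B_2 u=\sum_{h\leq d}(B^h_1-B^h_2)(B^h_1)^{-1}\big(B^h_1 H^h u\big).
\ee
Since the components $B^h_1 H^h u$ occupy distinct coordinate blocks of $\rk$, estimating the $h$-th block of $\B_1 u-\B_2 u$ by the matrix action of $(B^h_1-B^h_2)(B^h_1)^{-1}$ on $B^h_1 H^h u$ and aggregating over $h$ gives a pointwise bound $\abs{\B_1 u-\B_2 u}\leq C\,\abs{\B_1 u}$; integrating over $Q$ then produces $C\,PV_{\B_1}(u)$, which is \eqref{use_b1}. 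The inequality \eqref{use_b2} is identical after factoring instead through $(B^h_2)^{-1}B^h_2=I$, so that the integrand is controlled by $\abs{\B_2 u}$.

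The only delicate point — and the main obstacle — is the dimensional bookkeeping that turns the abstract constant $C$ into the stated $\sqrt{K}\,\abs{\B_1-\B_2}_{\ell^\infty}\sum_{h\leq d}\abs{(B^h_1)^{-1}}_{\ell^\infty}$. The hypothesis $\B_i\in\Sigma_P$ together with Notation \ref{operator_B} controls the entrywise norms $\abs{B^h_1-B^h_2}_{\ell^\infty}$ and $\abs{(B^h_1)^{-1}}_{\ell^\infty}$, whereas the pointwise step needs the Euclidean action of $(B^h_1-B^h_2)(B^h_1)^{-1}$ on $\R^{N^h}\subset\rk$. Converting $\abs{\cdot}_{\ell^\infty}$ into this operator bound costs a factor $\sqrt{N^h}\leq\sqrt{K}$ per block; using submultiplicativity for the product of the two matrices, bounding $\max_h$ by $\sum_h$, and applying $\sum_h a_h b_h\leq\big(\sum_h a_h\big)\big(\sum_h b_h\big)$ then assembles exactly the claimed constant. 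Keeping these dimension factors consistent (and confirming that $\sqrt{K}$ dominates them) is the entire content of the estimate; everything else is routine.
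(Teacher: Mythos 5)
Your proposal is correct and follows essentially the same route as the paper's proof: reduce to smooth $u$ via the simultaneous approximation of Corollary \ref{smooth_strict_approximation_finite}, apply the reverse triangle inequality to get $\abs{PV_{\B_1}(u)-PV_{\B_2}(u)}\leq\int_Q\abs{\B_1u-\B_2u}\,dx$, and insert the identity $(B^h_1)^{-1}B^h_1$ blockwise to recycle $\abs{\B_1 u}$, with the $\sqrt{K}$ factor absorbing the $\ell^\infty$-to-Euclidean conversion. The only difference is ordering (you approximate first, the paper treats the smooth case first and then passes to the limit), which is immaterial.
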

\begin{proof}
Assume for a moment that $u\in C^\infty(\bar Q)$, we compute that 
\be
\abs{PV_{\B_1}(u)-PV_{\B_2}(u) }= \abs{\int_Q\abs{\B_1u}dx-\int_Q\abs{\B_2 u}dx}\leq \int_Q\abs{\B_1u-\B_2u}dx.
\ee
Next, we observe that, for $x\in Q$,
\be
(\B_1-\B_2)u(x) =\sum_{h\leq d} (B^h_1- B^h_2)H^h u (x)= \sum_{h\leq d} (B^h_1- B^h_2) (B^h_1)^{-1} B^h_1 H^h u(x).
\ee
Thus, we have
\begin{align}
\int_Q\abs{(\B_1-\B_2)u}dx&\leq \fmp{\sum_{h\leq d}\abs{(B^h_1- B^h_2)}_{\ell^\infty}\abs{ (B^h_1)^{-1}}_{\ell^\infty}  }\sum_{h\leq d}\abs{B^h_1 H^h u}_{\mb(Q;\R^{N^h})} \\
&\leq \sqrt{K}\fmp{\sum_{h\leq d}\abs{(B^h_1- B^h_2)}_{\ell^\infty}\abs{ (B^h_1)^{-1}}_{\ell^\infty}}\int_Q\abs{\B_1u}dx.
\end{align}
Thus, we have that 
\begin{align}\label{smooth_dejin}
&\abs{PV_{\B_1}(u)-PV_{\B_2}(u)} \leq \int_Q\abs{\B_1u-\B_2u}dx\\
&\leq \fmp{\sqrt{K}\abs{\B_1- \B_2}_{\ell^\infty}\sum_{h\leq d}\abs{(B^h_1)^{-1}}_{\ell^\infty}}PV_{\B_1}(u).
\end{align}
To conclude, we use an approximation sequence $\seqe{u_\e}\subset C^\infty(\bar Q)$ from Corollary \ref{smooth_strict_approximation_finite} such that $u_\e\to u$ in $L^1$ and
\be
PV_{\B_1}(u_\e)\to PV_{\B_1}(u)\text{ and }PV_{\B_2}(u_\e)\to PV_{\B_2}(u).
\ee
This, and together with \eqref{smooth_dejin}, we obtain \eqref{use_b1} as desired. Lastly, we remark that we could conclude \eqref{use_b2} in the same way and hence the thesis.
\end{proof}
\textbf{Remark.} By \eqref{sigma_P_define}, \eqref{use_b1}, and \eqref{use_b2}, we conclude that 
\be
\abs{PV_{\B_1}(u)-PV_{\B_2}(u)}\leq \fmp{d\sqrt{K}P\abs{\B_1- \B_2}_{\ell^\infty}}PV_{\B_1}(u),
\ee
and
\be
\abs{PV_{\B_1}(u)-PV_{\B_2}(u)}\leq \fmp{d\sqrt{K}P\abs{\B_1- \B_2}_{\ell^\infty}}PV_{\B_2}(u).
\ee
Thus, by setting 
\be
O\fsp{\abs{\B_1- \B_2}_{\ell^\infty}}={d\sqrt{K}P\abs{\B_1- \B_2}_{\ell^\infty}},
\ee
we conclude that $\Sigma_P$ satisfies Assumption \ref{kappa_training_ground}.
\subsubsection{Operators with Energy constraint} We briefly mention that in our previous work \cite{liu2016Image}, a collection of first order operators $\B$ is introduced, based on some natural quasi-convex constraint. The precise definition is pretty complicated so we decide not to report it again here but refer our readers to \cite[Section 6]{liu2016Image} for future reference.

\section{Experimental insights, further extensions, and upcoming works}\label{fsaqs_sec}
\subsection{Numerical simulations}\label{primal_dual_rmk}
We remark that the reconstructed image $u_{\alpha,\B}$ defined in \eqref{ABsolution_map}, for any given $(\alpha,\B)\in\mathbb T$, can be computed by using the primal-dual algorithm presented in \cite{chambolle2011first}. Indeed, we could recast the minimizing problem \eqref{ABsolution_map} as the min-max problem 
\be
\min\flp{\max\flp{\norm{u-u_\eta}_{L^2}^2+\alpha \fjp{u,\B^\ast\vp}:\,\,\vp\in C_c^\infty(Q;\rk)}:\,\, u\in L^1(Q)},
\ee
and then the primal-dual method presented in \cite{chambolle2011first} applied.\\\\
Next, we present how we put Theorem \ref{PV_finiteapprox_result} into practical use. \\\\
\noindent\fbox{%
    \parbox{\textwidth}{%
Let $u_c\in L^2(Q)$ and $u_\eta\in L^2(Q)$ be given. Let an acceptable error $\e>0$ be given.
\begin{itemize}
\item
Initialization: Choose an acceptable error $\e>0$. Choose the box-constraint constant $P>0$.
\item
Step 1: Let $\delta=\e/2$ and increase step $l\in\N$ until the training error given in Assertion 2, Theorem \ref{PV_finiteapprox_result}, less or equal to $\e/2$.
\item
Step 2: Determine one global minimizer $(\alpha_{\T_l},\B_{\T_l})$ of assessment function $\CC(\alpha,\B)$  over the finite training ground $\T_l$. Then, by Theorem \ref{PV_finiteapprox_result} we have that
\be
\abs{\CC(\alpha_{\T_l},\B_{\T_l})-\CC(\alpha_\T,\B_\T)}\leq \e,
\ee
\item
Step 3: the reconstructed image $u_{\alpha_{\T_l},\B_{\T_l}}$ is then a desired optimal reconstructed result within the acceptable error range.
\end{itemize}
}%
}\\

For the sake of appropriate comparison, we apply our proposed training scheme $\mathcal T$ (\eqref{ABtraining_0_1}-\eqref{ABsolution_map}) on the image given in Figure \ref{fig:clean_noise} with the following training grounds
\be\label{test_ground0}
\T^0:=\fmp{0,1}\times \flp{\B_0},\text{ where }\B_{0}:=[1,0;0,1]
\ee
\be\label{test_ground1}
\T^1:=\fmp{0,1}\times \flp{\B_{s}:\,\,-0.5\leq s\leq 0.5},\text{ where }\B_{s}:=[1,s;0,1]
\ee
\be\label{test_ground2}
\T^2:=\fmp{0,1}\times \flp{\B_{s,t}:\,\,-0.5\leq s,t\leq 0.5},\text{ where }\B_{s,t}:=[1,s;t,1],
\ee
where we use super-script to avoid confusion with the finite training ground $\T_l$. Note that the training ground $\T^0$ gives the original training scheme $\mathcal B$ (\eqref{scheme_B1_BV}-\eqref{scheme_B2_BV}) with $TV$ regularizer only. We perform numerical simulations of the images shown in Figure \ref{fig:clean_noise}: the first image represents a clean image $u_c$, whereas the second one is a noised version $u_\eta$. We summarize our simulation results in Table \ref{table_test_result} below.
\begin{table}[!h]
\begin{tabular}{|l|l|l|l|l|l|}
\hline
Training ground & optimal solution & minimum assessment value   \\ \hline
$\T^0$ & $\alpha_{\T^0}=0.048$  & 14.8575    \\ \hline
$\T^1$ &$\alpha_{\T^1}=0.052$, $s_{\T^1}=0.4$  &12.8382  \\ \hline
 $\T^2$&$\alpha_{\T^2}=0.052$, $s_{\T^2}=-0.2$, $t_{\T^2}=0.5$ & 12.2369   \\ \hline
\end{tabular}
\caption{minimum assessment value for scheme $\mathcal T$ over training ground defined in \eqref{test_ground0}, \eqref{test_ground1}, and \eqref{test_ground2}}
\label{table_test_result}
\end{table}
We observe that, from Table \ref{table_test_result}, as the training ground expand, the minimum value of assessment function $\CC(\alpha,\B)$ decreased. In another word, our new regularizer $PV_\B$ indeed provides an improved reconstructed result compare with $TV$ regularizer. However, we remark that the extension of training ground results in a increasing of considerable large amount of CPU time, this would not be a big problem for practical application since we only need to use it one time for a given data set, and the structure of finite training ground $\T_l$ allows us to use parallel computing very efficiently and hence reduce the CPU usage.

\begin{figure}[!h]
  \centering
        \includegraphics[width=1.0\linewidth]{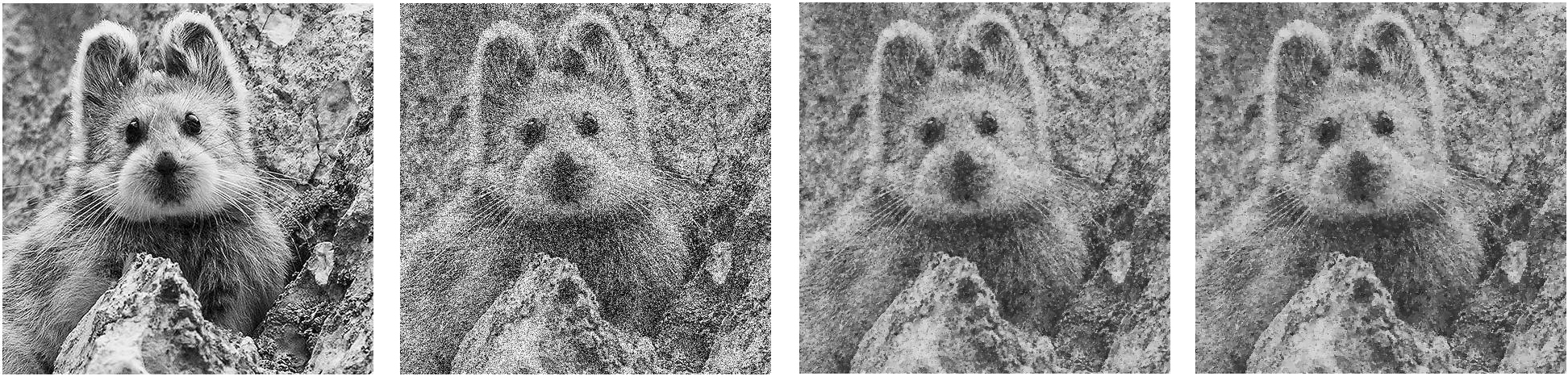}
\caption{From left to right: clean image $u_c$; corrupted image $u_\eta$ (with heavy artificial Gaussian noise); the optimally reconstructed image at ${\alpha_{\T^0}}$, the optimal reconstructed image at $\fsp{\alpha_{\T^2},\B_{\T^2}}$}
\label{fig:clean_noise}
\end{figure}
To explore the numerical landscapes of the assessment function $\mathcal A(\alpha,\B )$ with respect to $\B$, we consider the following training ground with intensity parameter $\alpha$ fixed
\be
\T=\flp{0.025}\times \flp{\B_{s,t}:\,\,-0.5\leq s,t\leq 0.5}.
\ee
and we plot in Figure \ref{fig:mesh_contour_0_1_skew} the mesh and contour images.

\begin{figure}[!h]
\begin{subfigure}{.495\textwidth}
  \centering
        \includegraphics[width=1.0\linewidth]{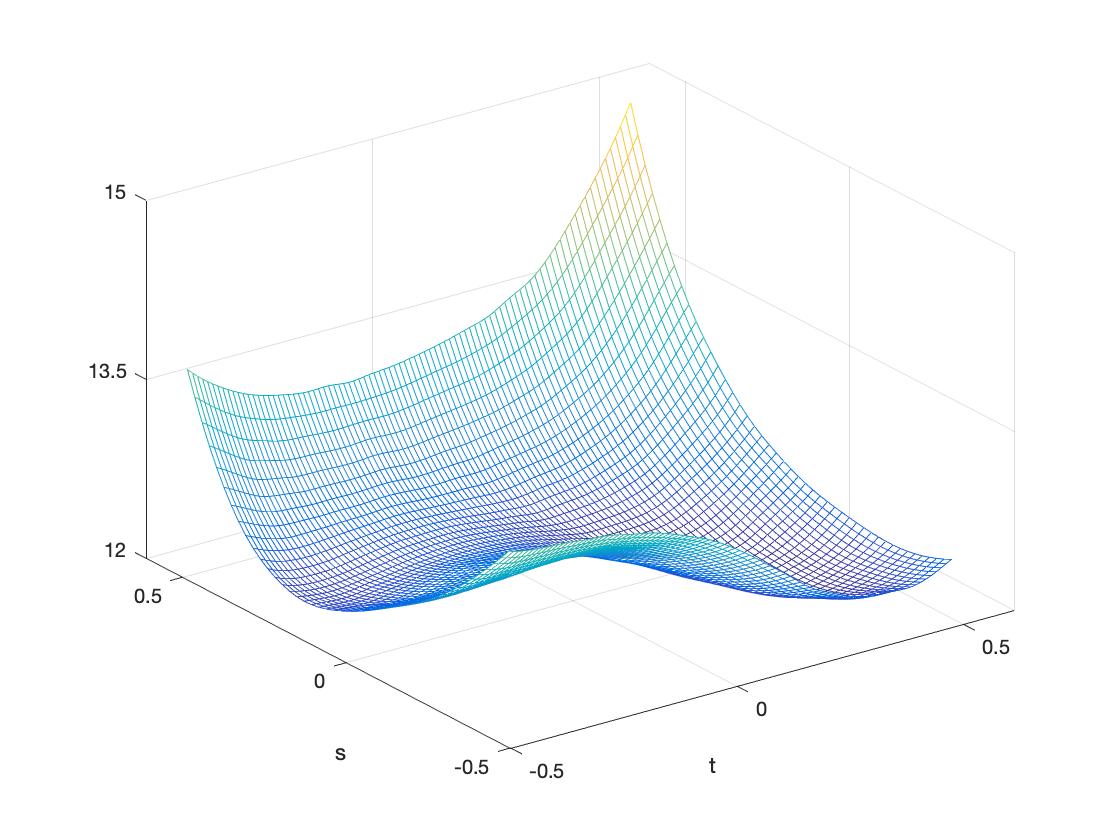}
%        \caption{Optimal reconstructed image $u_{\ta,\tilde p}$ from Scheme $(\mathcal K)$ (see \eqref{scheme_B1_BV_p_main}-\eqref{scheme_B2_BV_p_main}), where $\ta=5.8$ and $\tilde p=2.8$}
        %\label{fig:opt_op}
\end{subfigure}
\begin{subfigure}{.495\textwidth}
  \centering
        \includegraphics[width=1.0\linewidth]{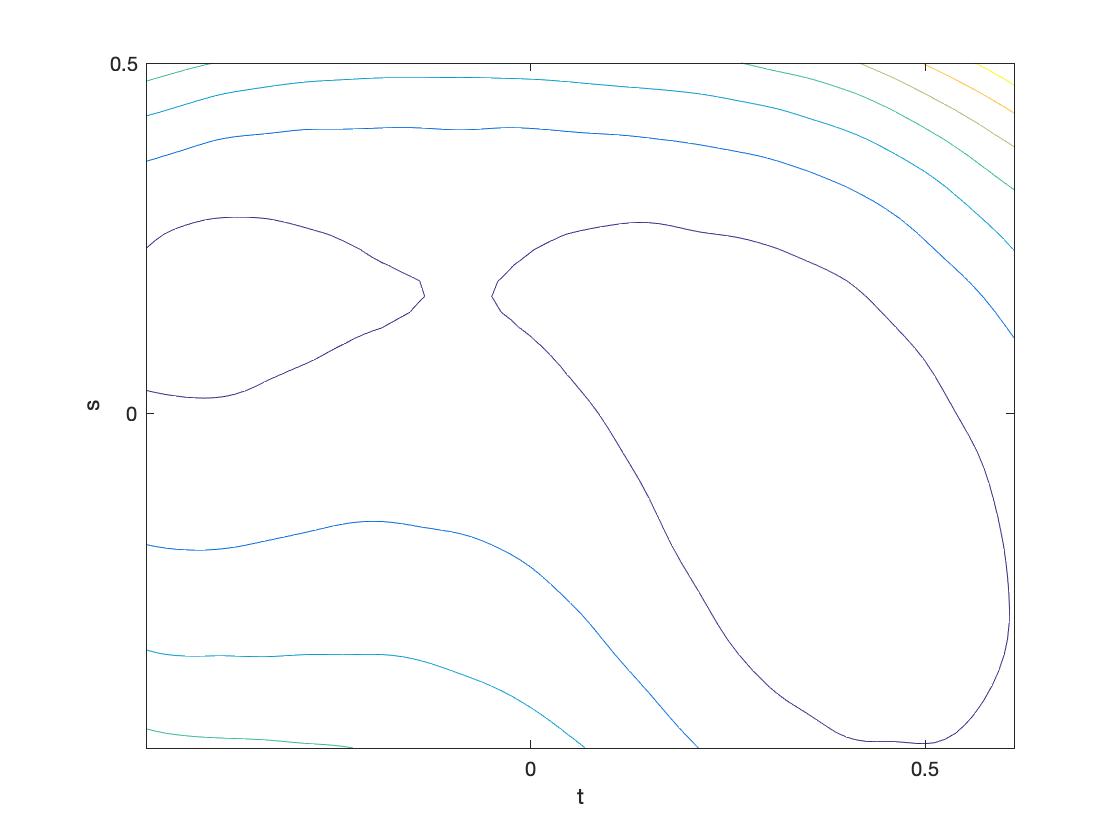}
%        \caption{Optimal reconstruction from Scheme $(\mathcal B)$ (see \eqref{scheme_B1_BV}-\eqref{scheme_B2_BV}), where $\ta=5.1$}
       % \label{fig:opt_o2}
\end{subfigure}
\caption{From left to right: mesh and contour plot of the assessment function $\mathcal A(0.052,s,t)$ in which $(s,t)\in[-0.5,0.5]^2$. We see from above figures that indeed $\CC(\cdot)$ is not convex.}
\label{fig:mesh_contour_0_1_skew}
\end{figure}
We remark that the introduction of $PV_\B$ regularizers into the training scheme is only meant to expand the training choices, but not to provide a superior semi-norm with respect to the standard $TV$ semi-norm. The fact whether the optimal regularizer is $TV$ or another intermediate regularizer, is completely dependent on the given training image $u_\eta=u_c+\eta$. Moreover, we remark that the results discovered in this article are not restricted to the imaging processing problems only. It can be generally applied to parameter estimation problems of variational inequalities, as long as a suitable assessment function can be defined.

\subsection{Further generalization of $PV_\B$ regularizer}\label{sec_further_generaliza}
\subsubsection{Extension with variating underlying Euclidean norm}
For $x=(x_1,\ldots,x_K)\in\rk$, we recall that, for $p\in[1,+\infty)$, the $p$-Euclidean norm of $x$ is defined as
\be
\abs{x}_p = \fsp{\sum_{i=1}^K \abs{x_i}^p}^{1/p}\text{ and }\abs{x}_\infty=\max\flp{\abs{x_i}:\,\,i=1,\ldots,K}.
\ee 
Note that for $p=2$, we recovery the standard Euclidean norm $\abs{x}=\abs{x}_2$, which is used in \eqref{BVB_norm}.\\\\
In the spirit of \cite{carolapani2018bilevel}, we could generalize regularizer $PV_\B$ by variating the underlying Euclidean norm. To be precise, we define
\be
PV_{p,\B}:=\sup\flp{\int_Q u\,\B^\ast\vp\,dx:\,\,\vp\in C_c^\infty(Q;\rk),\,\,\abs{\vp(x)}_{p}^\ast\leq 1},
\ee
where $\abs{\cdot}_p^\ast$ represents the dual norm of $\abs{\cdot}_p$, as well as a new training scheme
\begin{flalign}
{\text{Level 1. }}&\,\,\,\,\,\,(\alpha_\T,p_T,\B_\T)\in \argmin\flp{\norm{u_c-u_{\alpha,\B}}_{L^2(Q)}^2:\,\,(\alpha,p,\B)\in\mathbb T},\tag{$\mathcal T$-L1}\label{ABtraining_0_1p}&\\
{\text{Level 2. }}&\,\,\,\,\,\,u_{\alpha,p,\B}:=\argmin\flp{\norm{u-u_\eta}_{L^2}^2+\alpha PV_{p,\B}(u),\,\, u\in L^1(Q)},\tag{$\mathcal T$-L2}\label{ABsolution_mapp}&
\end{flalign}
with the training ground
\be
\T:=\bar\R\times [1,+\infty]\times\Sigma.
\ee 
We remark that, both Theorem \ref{main_thm} and Theorem \ref{PV_finiteapprox_result} holds on this new training scheme with training ground $\T$ and $\T = [0,\kappa]\times[1,+\infty]\times\Sigma$, respectively. The prove is identical to the argument presented before and the argument used in \cite{carolapani2018bilevel} when deal with parameter $p$, so we decide to not report it here to avoid redundancy.

\subsubsection{Extension with real order derivative} 
Let $s=(s_1,s_2\ldots, s_d)\in[0,1)^d$ be given, we define the real $s$-order operator $\B[s]:L^1(Q)\to \mathcal D'(Q;\rk)$ by
\be\label{A_quasiconvexity_operator}
\B[s] u:= \sum_{h=1}^dB^h (H^{h-s_h}u)\quad\text{for every }\,u\in L^1(Q),
\ee
where $H^{h-s_h}$ represent the $h-s_h$ order Hessian of $u$. For example, for $h=1$ and $s_1\in[0,1)$, we have 
\be
H^{1-s_1}u = [\partial_1^{1-s_1}u,\partial_2^{1-s_1}u],
\ee
where $\partial_1^{1-s_1}u$ represent the \emph{Riemann-Liouville}  fractional partial derivative with order $1-s_1\notin\N$ (see \cite[Definition 2.6]{2018arXiv180506761L}). Then we define
\be
PV_{s,p,\B}:=\sup\flp{\int_Q u\,(\B^\ast[s])\vp\,dx:\,\,\vp\in C_c^\infty(Q;\rk),\,\,\abs{\vp(x)}_{p}^\ast\leq 1}.
\ee
and the new scheme 
\begin{flalign}
{\text{Level 1. }}&\,\,\,\,\,\,(\alpha_\T,p_\T,s_\T,\B_\T)\in \argmin\flp{\norm{u_c-u_{\alpha,p,s,\B}}_{L^2(Q)}^2:\,\,(\alpha,p,s,\B)\in\mathbb T},\tag{$\mathcal T$-L1}\label{ABtraining_0_1ps}&\\
{\text{Level 2. }}&\,\,\,\,\,\,u_{\alpha,p,s,\B}:=\argmin\flp{\norm{u-u_\eta}_{L^2}^2+\alpha PV_{s,p,\B}(u),\,\, u\in L^1(Q)}.\tag{$\mathcal T$-L2}\label{ABsolution_mapps}&
\end{flalign}
with training groud
\be
\T:=\operatorname{cl}\fsp{\R^+}\times [1,+\infty]\times[0,1)^d\times\Sigma.
\ee
We remark that Theorem \ref{main_thm} holds, with additional technics needed when deal with order parameter $s$ which we report separately in \cite{2019arXiv180506761L}. However, dual to the complexity of fractional order derivative, we can not directly deduce an analogously version of Theorem \ref{PV_finiteapprox_result}.

\subsection{Upcoming works}
In \cite{liu2016Image}, a \emph{PDE}-constraint total generalized variation, say $PGV^2_{\mathscr D}$, in multi-dimensions $N\in\N$ is defined as follows
\be
{PGV_{\mathscr D}^2(u)}:=\inf\flp{\abs{\nabla u- v_0}_{\mathcal{M}_b(Q)}+\abs{\mathscr D v_0}_{\mb(Q;\mathbb M^{N\times N})}:\,v_0\in L^1(Q;\mathbb M^{N\times N})},
\ee
where $\mathscr D$: $L^1(Q;\rn)\to\mathcal D'(Q;\mathbb M^{N\times N})$ is a first order differential operator with some natural \emph{PDE} constraint (see Section 3 in \cite{liu2016Image}).\\\\
In our follow-up work, we propose to construct an unified approach to regularizers $PGV_{\mathscr D}^2$ and $PV_{s,\B}$ via
\be
PGV_{s,\B,t,\mathscr D}(u):=\inf\flp{\abs{\B[s] u- tv_0}_{\mathcal{M}_b(Q)}+t\abs{\mathscr D[t] v_0}_{\mb(Q;\mathbb M^{N\times N})}:\,v_0\in L^1(Q;\mathbb M^{N\times N})}.
\ee
We shall equip the training scheme $\mathcal T$ with this new regularizer and provide an analogously version of Theorem \ref{main_thm} and Theorem \ref{PV_finiteapprox_result} in our follow-up work.\\\\
\textbf{Acknowledgments.} PL acknowledges support from the EPSRC Centre Nr. EP/N014588/1 and the Leverhulme Trust project on Breaking the non-convexity barrier. The author also gratefully acknowledge the support of NVIDIA Corporation with the donation of a Quadro P6000 GPU used for this research.

\bibliographystyle{abbrv}
\bibliography{PBR_A}{}

\end{document}